\newcommand{\set}[2]{\left\{ #1 \mid #2 \right\}}
\newcommand{\PP}{\mathbb{P}}
\newcommand{\Isom}{\operatorname{Isom}}
\newcommand{\aideal}{\mathfrak{a}}
\newcommand{\Gr}{G}
\newcommand{\MC}{\mathsf{MC}}
\newcommand{\Tw}{\mathsf{Tw}}
\newcommand{\interior}{\operatorname{int}}
\newcommand{\map}{map}
\newcommand{\GL}{GL}
\newcommand{\ad}{\operatorname{ad}}
\newcommand{\dquot}{/\hspace{-3pt}/}
\newcommand{\HL}{\mathcal{L}}
\newcommand{\QQ}{\mathbb{Q}}
\newcommand{\RR}{\mathbb{R}}
\newcommand{\ZZ}{\mathbb{Z}}
\newcommand{\monoid}{\mathcal{H}}
\newcommand{\Gmonoid}{\mathcal{G}}
\newcommand{\univ}{\gamma}
\newcommand{\FF}{\mathcal{F}}
\newcommand{\pdideal}{\mathfrak{d}}
\newcommand{\tensor}{\otimes}
\newcommand{\ctensor}{\widehat{\tensor}}
\newcommand{\Hom}{\operatorname{Hom}}
\newcommand{\Diff}{\operatorname{Diff}}
\newcommand{\tDiff}{\widetilde{\Diff}}
\newcommand{\Emb}{\operatorname{Emb}}
\newcommand{\aut}{aut}
\newcommand{\Der}{\operatorname{Der}}
\newcommand{\gl}{\mathfrak{g}}
\newcommand{\hl}{\mathfrak{h}}
\newcommand{\al}{\mathfrak{a}}
\newcommand{\homl}{\mathfrak{hom}}
\newcommand{\CP}{\mathbb{C}\mathrm{P}}
\newcommand{\SDR}[5]{\xymatrix{*[r]{#1} \ar@<1ex>[r]^-{#3} \ar@(ul,dl)[]_{#5} & #2 \ar@<1ex>[l]^-{#4}}}
\newcommand{\bigSDR}[5]{\xymatrix{*[r]{#1} \ar@<1ex>[rr]^-{#3} \ar@(ul,dl)[]_{#5} && #2 \ar@<1ex>[ll]^-{#4}}}
\newcommand{\bigbigSDR}[5]{\xymatrix{*[r]{#1} \ar@<1ex>[rrr]^-{#3} \ar@(ul,dl)[]_{#5} &&& #2 \ar@<1ex>[lll]^-{#4}}}
\newcommand{\nocontentsline}[3]{}
\newcommand{\tocless}[2]{\bgroup\let\addcontentsline=\nocontentsline#1{#2}\egroup}
\newtheorem{theorem}{Theorem}[section]
\newtheorem{proposition}[theorem]{Proposition}
\newtheorem{corollary}[theorem]{Corollary}
\newtheorem{lemma}[theorem]{Lemma}
\theoremstyle{definition}
\newtheorem{definition}[theorem]{Definition}
\newtheorem{remark}[theorem]{Remark}
\title{Characteristic classes for families of bundles}
\author{Alexander Berglund}
\address{Department of Mathematics\\
Stockholm University\\
SE-106 91 Stockholm\\
Sweden}
\email{alexb@math.su.se}
\begin{document}
\begin{abstract}
The generalized Miller-Morita-Mumford classes of a manifold bundle with fiber $M$ depend only on the underlying \emph{$\tau_M$-fibration}, meaning the family of vector bundles formed by the tangent bundles of the fibers. This motivates a closer study of the classifying space for $\tau_M$-fibrations, $B\aut(\tau_M)$, and its cohomology ring, i.e., the ring of characteristic classes of $\tau_M$-fibrations.

For a bundle $\xi$ over a simply connected Poincar\'e duality space, we construct a relative Sullivan model for the universal orientable $\xi$-fibration together with explicit cocycle representatives for the characteristic classes of the canonical bundle over its total space. This yields tools for computing the rational cohomology ring of $B\aut(\xi)$ as well as the subring generated by the generalized Miller-Morita-Mumford classes.

To illustrate, we carry out sample computations for spheres and complex projective spaces. We discuss applications to tautological rings of simply connected manifolds and to the problem of deciding whether a given $\tau_M$-fibration comes from a manifold bundle.
\end{abstract}

\maketitle

\tableofcontents

\section{Introduction}
The generalized Miller-Morita-Mumford classes, or tautological classes, are characteristic classes of manifold bundles that play an important role in the study of the cohomology of moduli spaces of manifolds \cite{MW,GRW}. The tautological class $\kappa_c$, associated to a characteristic class $c$ of oriented vector bundles, is defined by its evaluation on an oriented manifold bundle,
$$M\to E\xrightarrow{\pi} B,$$
namely
$$\kappa_c(\pi) = \int_M c(T_\pi E) \in H^*(B),$$
i.e., the class $\kappa_c(\pi)$ is obtained by integration along the fiber of the characteristic class $c$ evaluated on the fiberwise tangent bundle $T_\pi E$.

This paper studies characteristic classes that, like the tautological classes, are defined using only tangential data, viz.~the fiberwise tangent bundle and its characteristic classes, and homotopy theory.

For a bundle $\xi$ over a space $X$, we define a \emph{$\xi$-fibration} to be a pair $(\pi,\zeta)$, where
\begin{itemize}
\item $\pi\colon E\to B$ is a fibration whose fibers are homotopy equivalent to $X$, and
\item $\zeta$ is a bundle over $E$ (the `total bundle') whose restriction to each fiber is equivalent to $\xi$ in an appropriate sense.
\end{itemize}

In other words, a $\xi$-fibration is a \emph{family of bundles} $\{\zeta_b\}$ parameterized by the space $B$ such that $\zeta_b \sim\xi$ for every $b\in B$.

Every smooth manifold bundle $\pi\colon E\to B$ with fiber $M$ gives rise to a $\tau_M$-fibration $(\pi,\zeta)$ where $\zeta_b$ is the tangent bundle of the fiber $E_b = \pi^{-1}(b)$, i.e., $\zeta$ is the fiberwise tangent bundle $T_\pi E$.

The base of the universal $\xi$-fibration may be identified with $B\aut(\xi)$, the classifying space of the topological monoid $\aut(\xi)$ of automorphisms of $\xi$ that cover a self-homotopy equivalence of $X$. Therefore, the cohomology ring of $B\aut(\xi)$ may be thought of as the ring of characteristic classes of $\xi$-fibrations.

The main result of the paper, Theorem \ref{thm:sullivan model}, is the construction of a relative Sullivan model for the universal orientable $\xi$-fibration together with explicit formulas for cocycle representatives of the characteristic classes of its total bundle. This can be used to compute $H^*(B\aut(\xi);\QQ)$, and the subring $R^*(\xi)$ generated by the tautological classes, for bundles $\xi$ over simply connected Poincar\'e duality spaces.

We will spend the rest of this introduction discussing sample calculations done using Theorem 3.8 and some applications. Full details are given in \S4.

\begin{theorem} \label{thm:even spheres}
Consider the oriented tangent bundle $\tau_{S^m}$ of an even dimensional sphere $S^m$. The ring of characteristic classes of $\tau_{S^{m}}$-fibrations may be identified with the polynomial ring
$$H^*(B\aut(\tau_{S^{m}});\QQ) = \QQ[\kappa_{ep_1},\ldots,\kappa_{ep_k}, \kappa_{p_r},\ldots,\kappa_{p_k}],$$
where $m = 2k$ and $r$ is the smallest integer such that $4r>m$. 
In particular, all characteristic classes of $\tau_{S^{m}}$-fibrations are tautological.
\end{theorem}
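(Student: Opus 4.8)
The plan is to obtain Theorem~\ref{thm:even spheres} by specialising Theorem~\ref{thm:sullivan model} to $\xi=\tau_{S^{m}}$, $m=2k$. First I would record the input data. The minimal Sullivan model of $S^{m}$ is $(\Lambda(x,y),d)$ with $|x|=m$, $|y|=2m-1$ and $dy=x^{2}$. Since $\tau_{S^{m}}$ is stably trivial, all of its rational Pontryagin classes vanish, while its Euler class is $e(\tau_{S^{m}})=\chi(S^{m})\,x=2x$; in particular $\chi(S^{m})=2\neq 0$, which I expect to be decisive. Feeding these data into Theorem~\ref{thm:sullivan model} produces a relative Sullivan model $(\Lambda V,d)\to(\Lambda V\otimes\Lambda(x,y),\mathcal{D})$ for the universal orientable $\tau_{S^{m}}$-fibration, in which $(\Lambda V,d)$ models $B\aut(\tau_{S^{m}})$, together with explicit cocycles in $\Lambda V\otimes\Lambda(x,y)$ representing $e(\zeta)$ and the $p_{i}(\zeta)$ for the canonical bundle $\zeta$ over the total space.

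Next I would identify $H^{*}(B\aut(\tau_{S^{m}});\QQ)$ as an abstract ring. By the construction $V$ is dual to a derivation complex of the relative model of $\tau_{S^{m}}$ over $S^{m}$; alternatively one may use the fibration $B\mathrm{Gau}(\tau_{S^{m}})\to B\aut(\tau_{S^{m}})\to B\aut(S^{m})$ together with $B\SO(m)_{\QQ}\simeq\prod_{i<k}K(\QQ,4i)\times K(\QQ,m)$ and the fact that the rational homotopy of $\aut(S^{m})$ is concentrated, for $m$ even, in degree $2m-1$. Because $\chi(S^{m})\neq 0$ the ``Euler block'' of the relevant complex is acyclic, and a degree count then shows that the rational homotopy of $B\aut(\tau_{S^{m}})$ — that is, $V$ — lies entirely in \emph{even} degrees, being one-dimensional in each of the degrees $4,8,\dots,4k$ and in each of the degrees $4r-m,\,4r-m+4,\dots,m$, where $r$ is least with $4r>m$. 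An evenly graded $V$ forces the differential of its minimal model to vanish, so $H^{*}(B\aut(\tau_{S^{m}});\QQ)\cong\Lambda V$ is polynomial on generators in precisely these degrees, of which there are $k+\lceil k/2\rceil$.

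It remains to exhibit tautological generators. For a characteristic class $c$ of oriented rank-$m$ bundles, $\kappa_{c}$ is computed by substituting the Theorem~\ref{thm:sullivan model} cocycle for $c(\zeta)$ into the fibre integration $\int_{S^{m}}\colon\Lambda V\otimes\Lambda(x,y)\to\Lambda V$, the $\Lambda V$-linear map with $x\mapsto 1$ and the remaining generators $\mapsto 0$. Applying this with $c=ep_{i}$ ($1\le i\le k$) yields $\kappa_{ep_{i}}$ in degree $4i$, and with $c=p_{i}$ ($r\le i\le k$) yields $\kappa_{p_{i}}$ in degree $4i-m$; here $\kappa_{p_{k}}=\kappa_{e^{2}}$ realises the degree-$m$ generator that comes from the underlying $S^{m}$-fibration. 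From the explicit formulas one checks that each listed class is indecomposable, and that in the degrees where two of them collide — which happens exactly when $m\equiv0\pmod4$ — the two are independent modulo decomposables. The listed classes are then in degreewise bijection with the polynomial generators of $\Lambda V$, so the natural map $\QQ[\kappa_{ep_{1}},\dots,\kappa_{ep_{k}},\kappa_{p_{r}},\dots,\kappa_{p_{k}}]\to H^{*}(B\aut(\tau_{S^{m}});\QQ)$ is an isomorphism; in particular $R^{*}(\tau_{S^{m}})$ is the whole ring.

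I expect the crux to be the middle step: running the derivation-complex computation underlying Theorem~\ref{thm:sullivan model} and proving that its homology is evenly graded with exactly the stated Poincar\'e series. This is precisely where evenness of the sphere and the non-vanishing of $\chi(S^{m})$ are used — they make the Euler block acyclic, simultaneously killing the potential odd-degree classes and pinning down the generating set; without $\chi(S^{m})\neq 0$ the class $\kappa_{e^{2}}$ would no longer be detected this way and the answer would change. Granting the abstract identification, matching the generators with the $\kappa$-classes is routine bookkeeping with the cocycle formulas the theorem supplies.
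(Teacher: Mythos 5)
Your proposal is correct and takes essentially the same route as the paper: specialize Theorem \ref{thm:sullivan model} to $\tau_{S^{2k}}$ (the paper carries out your middle step by exhibiting an explicit abelian Lie model with trivial differential, so that $H^*(B\aut(\tau_{S^m});\QQ)$ is visibly a polynomial ring on even generators in exactly the degrees you list), then compute $\kappa_{ep_i}$, $\kappa_{p_i}$, $\kappa_{e^2}$, $\kappa_{e^3}$ from the cocycle formulas and fiber integration and match linear terms against the generators. One small correction of emphasis: the even-gradedness and the Poincar\'e series only use that $m$ is even (the relation $dy=x^2$ in the minimal model, equivalently $\pi_*(\aut_\circ(S^{2k}))\otimes\QQ$ concentrated in degree $4k-1$), whereas the nonvanishing of $e(\tau_{S^m})=2x$ enters later, through $e(\zeta)=2x+e$ and hence $\kappa_{e^2}=4e$, $\kappa_{e^3}=-8a+6e^2$, so it is decisive for the tautological-generation step rather than for the abstract ring structure.
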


Our calculation for odd dimensional spheres, to be presented next, informs the following definition.
For a class $c\in H^*(BSO(m))$ of degree $<m$, we define a characteristic class $\alpha_c$ of $\tau_{S^m}$-fibrations $(\pi\colon E\to B,\zeta)$ by
$$\pi^*(\alpha_c(\pi,\zeta)) = c(\zeta) \in H^*(E).$$
This uniquely defines $\alpha_c(\pi,\zeta)\in H^*(B)$, because $\pi^*\colon H^*(B) \to H^*(E)$ is an isomorphism in degrees $<m$.

Next, recall that every spherical fibration $S^{m}\to E\xrightarrow{\pi} B$ has an associated Euler class $e(\pi)\in H^{m+1}(B)$.
The Euler class is a characteristic class of spherical fibrations and in particular of $\tau_{S^{m}}$-fibrations.

Let $A^*(\tau_{S^{m}})$ denote the subring of $H^*(B\aut(\tau_{S^{m}});\QQ)$ generated by the $\alpha$-classes and the Euler class.

\begin{theorem} \label{thm:odd spheres}
Consider the oriented tangent bundle $\tau_{S^{m}}$ of an odd dimensional sphere $S^m$. Let $m=2k+1\geq 3$ and let $r$ be the smallest integer such that $4r>m$.
\begin{enumerate}
\item The cohomology ring $H^*(B\aut(\tau_{S^{m}});\QQ)$ is additively the direct sum
$$A^*(\tau_{S^{m}}) \oplus R^*(\tau_{S^{m}}),$$
and the multiplication is determined by
\begin{align*}
\alpha_a \kappa_b & = \kappa_{ab}, \\
e \kappa_b & = 0,
\end{align*}
for all $a,b\in H^*(BSO(m))$ with $|a|<m$.
In particular, the ring of characteristic classes of $\tau_{S^{m}}$-fibrations is generated by the $\alpha$-classes, the Euler class, and the $\kappa$-classes.

\item The ring generated by the $\alpha$-classes and the Euler class may be identified with the polynomial ring
$$A^*(\tau_{S^{m}}) = \QQ[\alpha_{p_1},\ldots,\alpha_{p_{r-1}},e].$$

\item The tautological ring is isomorphic to the ring of exact K\"ahler differential forms on $\QQ[p_1,\ldots,p_k]$, with respect to a formal differential $d$ of degree $-m$ which is linear over $\QQ[p_1,\ldots,p_{r-1}]$,
$$R^*(\tau_{S^{m}}) \cong d \Omega_{\QQ[p_1,\ldots, p_k]|\QQ[p_1,\ldots,p_{r-1}]}^*,$$
through an isomorphism that sends $\kappa_c$ to $dc$ for all $c\in \QQ[p_1,\ldots,p_k]$.
\end{enumerate}
\end{theorem}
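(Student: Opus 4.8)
My plan is to specialize Theorem~\ref{thm:sullivan model} to $\xi = \tau_{S^m}$. For $m = 2k+1$ odd a minimal Sullivan model of $S^m$ is $(\Lambda(x),0)$ with $|x| = m$, the generator representing the fundamental class. The bundle $\tau_{S^m}$ is stably trivial, and in fact its classifying map $S^m \to BSO(m)$ is rationally null-homotopic: $\pi_m(BSO(m)) = \pi_{m-1}(SO(m))$ has trivial rationalization, since the rational homotopy of $SO(2k+1)$ is concentrated in the odd degrees $4i-1$, $i = 1,\ldots,k$, which omits $m-1 = 2k$. Hence all Pontryagin classes $p_i(\tau_{S^m}) \in H^{4i}(S^m;\QQ)$ vanish and the rational bundle data fed into Theorem~\ref{thm:sullivan model} is trivial.

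Running Theorem~\ref{thm:sullivan model} on this input produces a relative Sullivan model $(\Lambda W, d) \to (\Lambda W \otimes \Lambda(x), D)$ of the universal orientable $\tau_{S^m}$-fibration, together with cocycle representatives of the classes $p_j(\zeta)$. Since the fibre model has the single generator $x$, I expect this to reduce to a very explicit form. First, $Dx = e$ with $e \in (\Lambda W)^{m+1}$ representing the Euler class of the spherical fibration, so that integration along the fibre is taking the coefficient of $x$. Second, with the trivial bundle data $\Lambda W$ is free graded-commutative on: $e$ (degree $m+1$); classes $\alpha_{p_i}$ of degree $4i$ for $1 \le i \le r-1$ with $p_i(\zeta) = \alpha_{p_i}$; classes $\kappa_{p_j}$ of degree $4j - m$ for $r \le j \le k$; and auxiliary classes $y_j$ of degree $4j$ for $r \le j \le k$ with $p_j(\zeta) = y_j + \kappa_{p_j}x$. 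The differential is zero on all generators except the $y_j$, for which $dy_j = e\kappa_{p_j}$ --- this being forced by $D(p_j(\zeta)) = 0$ together with $Dx = e$. I expect pinning this model down --- checking that the output of Theorem~\ref{thm:sullivan model} simplifies to exactly this, using the one-generator fibre model and the rational triviality of $\tau_{S^m}$ --- to be the main technical point.

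I would then compute $H^*(\Lambda W, d)$. The closed polynomial generators split off a tensor factor $\QQ[\alpha_{p_1},\ldots,\alpha_{p_{r-1}}]$, reducing to the complex $(\QQ[e,y_r,\ldots,y_k] \otimes \Lambda(\kappa_{p_r},\ldots,\kappa_{p_k}), d)$ with $dy_j = e\kappa_{p_j}$. Filtering by powers of $e$, the first differential of the associated spectral sequence is $e$ times the algebraic de Rham differential of the affine space with coordinates $y_r,\ldots,y_k$, the $\kappa_{p_j}$ playing the role of $dy_j$; as that de Rham complex is acyclic in positive degrees over a field of characteristic zero, the sequence collapses at $E_2$ and the cohomology of the complex is $\QQ[e]$ in exterior degree zero together with the exact forms of positive exterior degree in the $y_j$, with $e$ acting as zero on the latter. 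Reinstating the $\alpha$-factor and making the substitution $\alpha_{p_i} \leftrightarrow p_i$, $y_j \leftrightarrow p_j$, $\kappa_{p_j} \leftrightarrow dp_j$ identifies $H^*(B\aut(\tau_{S^m});\QQ)$ with $\QQ[\alpha_{p_1},\ldots,\alpha_{p_{r-1}},e] \oplus d\,\Omega_{\QQ[p_1,\ldots,p_k]|\QQ[p_1,\ldots,p_{r-1}]}^*$. This gives the additive splitting of part (1); combined with the fact that any $\alpha_c$ with $|c| < m$ is a polynomial in $\alpha_{p_1},\ldots,\alpha_{p_{r-1}}$ --- such a $c$ lying automatically in $\QQ[p_1,\ldots,p_{r-1}]$ since $4r > m$ --- it yields part (2).

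Finally I would read off the ring structure. The relation $e\kappa_c = 0$ follows because $D(c(\zeta)) = 0$ forces $e\kappa_c$ to be a boundary in $\Lambda W$. The relation $\alpha_a\kappa_b = \kappa_{ab}$ for $|a| < m$ follows because such an $a$ lies in $\QQ[p_1,\ldots,p_{r-1}]$, so $a(\zeta)$ has no $x$-component, and integration along the fibre is a module map over $H^*(B\aut(\tau_{S^m}))$. Expanding the coefficient of $x$ in $c(\zeta)$ for a monomial $c = p_{j_1}\cdots p_{j_s}$ exhibits $\kappa_c$ as the image of $dc$ under the substitution above, where $d$ is linear over $\QQ[p_1,\ldots,p_{r-1}]$ precisely because $\kappa_{p_i} = 0$ for $i < r$; together with the identification of $R^*(\tau_{S^m})$ as the algebra of exact forms this gives the ring isomorphism $R^*(\tau_{S^m}) \cong d\,\Omega_{\QQ[p_1,\ldots,p_k]|\QQ[p_1,\ldots,p_{r-1}]}^*$ carrying $\kappa_c$ to $dc$, which is part (3).
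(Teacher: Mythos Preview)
Your proposal is correct and follows essentially the same route as the paper. Both specialize Theorem~\ref{thm:sullivan model} to the one-generator model $(\Lambda(x),0)$ with trivial characteristic cochains, arrive at the cdga $\Lambda(e,y_r,\ldots,y_k,\alpha_{p_1},\ldots,\alpha_{p_{r-1}},\kappa_{p_r},\ldots,\kappa_{p_k})$ with $dy_j=e\kappa_{p_j}$ (the paper writes $z,p_j,p_j^x$ for your $e,y_j,\kappa_{p_j}$), and identify $p_j(\zeta)=y_j+\kappa_{p_j}x$ so that $\kappa_c=Dc$. The only cosmetic difference is that the paper keeps the $\alpha$-variables inside $\Omega$ and computes $H^*(\Omega[z],zD)\cong Z(\Omega,D)\ltimes\overline{H^*(\Omega,D)[z]}$ directly, whereas you split off the $\alpha$-factor first and run a filtration-by-$e$ spectral sequence; both yield the same answer.
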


For concreteness, let us look closer at what this means for $S^3$.

\begin{corollary}
The tautological ring $R^*(\tau_{S^3})$ of the tangent bundle of $S^3$ is spanned by the $\kappa$-classes associated to the Hirzebruch $L$-classes,
$$\kappa_{\HL_1}, \,\,\kappa_{\HL_2},\,\, \cdots.$$
The multiplication is trivial.
\end{corollary}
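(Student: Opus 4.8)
The plan is to specialize Theorem~\ref{thm:odd spheres} to $m=3$ and then perform the one genuinely computational step, namely the restriction of the Hirzebruch $L$-classes to $H^*(BSO(3);\QQ)$. For $S^3$ we have $m=3=2k+1$ with $k=1$, and the smallest integer with $4r>3$ is $r=1$, so $r-1=0$. Thus the polynomial rings appearing in part~(3) collapse to $\QQ[p_1,\ldots,p_k]=\QQ[p_1]$ and $\QQ[p_1,\ldots,p_{r-1}]=\QQ$, and Theorem~\ref{thm:odd spheres}(3) supplies an isomorphism $R^*(\tau_{S^3})\cong d\,\Omega^*_{\QQ[p_1]\mid\QQ}$ sending $\kappa_c$ to $dc$, where $d$ is the (here simply $\QQ$-linear) formal differential of degree $-3$.

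Next I would unwind the target. Since $\QQ[p_1]$ is a polynomial ring in a single variable, its module of Kähler differentials is free of rank one on $dp_1$, so $\Omega^0=\QQ[p_1]$, $\Omega^1=\QQ[p_1]\,dp_1$, and $\Omega^{\geq 2}=0$. Hence the ring of exact forms $d\,\Omega^*_{\QQ[p_1]\mid\QQ}$ is the $\QQ$-span of the classes $d(p_1^{\,j})=j\,p_1^{\,j-1}\,dp_1$ for $j\geq 1$; these are $\QQ$-linearly independent, and any product of two of them lies in $\Omega^2=0$ and therefore vanishes. Transporting this back through the isomorphism, $R^*(\tau_{S^3})$ has $\QQ$-basis $\{\kappa_{p_1^{\,j}}\}_{j\geq1}$ and identically trivial multiplication. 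It then remains only to check that $\{\kappa_{\HL_j}\}_{j\geq1}$ is another basis, i.e.\ to express $\HL_j$ in terms of $p_1$ after restriction to $BSO(3)$.

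For that I would use the multiplicative description of the total $L$-class in terms of Pontryagin roots, $\prod_i \sqrt{y_i}/\tanh\sqrt{y_i}$: restriction to $H^*(BSO(3);\QQ)=\QQ[p_1]$ sets $p_2=p_3=\cdots=0$, equivalently keeps a single Pontryagin root with $y_1=p_1$, so the total $L$-class restricts to $\sqrt{p_1}/\tanh\sqrt{p_1}=\sum_{j\geq0}\tfrac{2^{2j}B_{2j}}{(2j)!}\,p_1^{\,j}$. Thus $\HL_j$ restricts to $c_j\,p_1^{\,j}$ with $c_j=2^{2j}B_{2j}/(2j)!$, and since the even Bernoulli numbers $B_{2j}$ never vanish, $c_j\neq0$ for all $j\geq1$. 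Consequently $\kappa_{\HL_j}=c_j\,\kappa_{p_1^{\,j}}$, so $\{\kappa_{\HL_j}\}_{j\geq1}$ is a $\QQ$-basis of $R^*(\tau_{S^3})$ and the multiplication is trivial by the previous paragraph. The only step carrying real content is the nonvanishing of $c_j$ (one could alternatively read triviality of the product off Theorem~\ref{thm:odd spheres}(1), noting there are no $\alpha$-classes of positive degree $<3$); otherwise this is bookkeeping with Theorem~\ref{thm:odd spheres}, and I anticipate no serious obstacle.
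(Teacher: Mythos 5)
Your proposal is correct and follows essentially the same route as the paper: specialize Theorem \ref{thm:odd spheres}(3) to $m=3$, identify $d\,\Omega^*_{\QQ[p_1]\mid\QQ}$ as the span of $p_1^{j-1}dp_1$ with trivial products, and use that $\HL_j$ restricts to a nonzero multiple of $p_1^j$ in $H^*(BSO(3);\QQ)$. The only difference is that you make the nonvanishing of the coefficients explicit via the power series $\sqrt{p_1}/\tanh\sqrt{p_1}$ and the even Bernoulli numbers, where the paper simply cites $\HL_i = b_i p_1^i$ with $b_i \neq 0$.
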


\begin{proof}
The ring of exact K\"ahler forms over a polynomial ring has a non-trivial algebraic structure in general, but for $m =3$ the ring
$$R^*(\tau_{S^3}) = d \Omega_{\QQ[p_1]|\QQ}^*.$$
has basis $p_1^{i-1}dp_1$ for $i=1,2,\ldots$ and the multiplication is trivial.
The Hirzebruch $L$-classes in the cohomology of $BSO(3)$ are given by $\HL_i = b_i p_1^i$,
for certain non-zero rational numbers $b_i$.
Through the isomorphism in the theorem, $\kappa_{\HL_i}$ corresponds to
$d\HL_i = i b_i p_1^{i-1}dp_i$,
showing the classes $\kappa_{\HL_i}$ are non-zero and span $R^*(\tau_{S^3})$.
\end{proof}
The classes $\kappa_{\HL_i}$ may be thought of as obstructions for extending a given $\tau_{S^m}$-fibration to a fiber bundle;
it is a consequence of the family signature theorem that $\kappa_{\HL_i}(\pi,\zeta) = 0$ for all $i>m/4$ if $(\pi,\zeta)$ is the $\tau_{S^m}$-fibration associated to a smooth manifold bundle with fiber $S^m$ (see \cite[Theorem A.2]{KRW}).

Define $B\aut(\tau_{S^m})_L$ to be the homotopy fiber of the map
$$L\colon B\aut(\tau_{S^m}) \to \prod_{m/4<i\leq  m/2} K(\QQ,4i-m)$$
that records the classes $\kappa_{\HL_i}$ in the indicated range. It may be thought of as the classifying space of $\tau_{S^m}$-fibrations with trivializations of the classes $\kappa_{\HL_i}$.

Every oriented vector bundle $\pi\colon E\to B$ of dimension $m+1$ has an associated sphere bundle
$$S^m \to S(E) \to B.$$
This is an $SO(m+1)$-bundle, so it can be equipped with a fiberwise tangent bundle $\zeta\to S(E)$ making it into a $\tau_{S^m}$-fibration, and the map that classifies it factors through $B\aut(\tau_{S^m})_L$.

While the calculations for $m$ even and $m$ odd are quite different, they both lead to the following result (for $m$ odd we do not suggest that it is an obvious consequence of the statement of Theorem \ref{thm:odd spheres} but it follows from the methods that prove it).
\begin{theorem} \label{thm:spheres}
The map
\begin{equation} \label{eq:so}
BSO(m+1) \to B\aut(\tau_{S^m})_L,
\end{equation}
induced by taking the oriented sphere bundle of the universal oriented vector bundle, is a rational homotopy equivalence.
\end{theorem}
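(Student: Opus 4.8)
The plan is to reduce the statement to rational cohomology and then treat the cases $m$ even and $m$ odd separately, feeding into each the computation of $H^*(B\aut(\tau_{S^m});\QQ)$ provided by Theorems \ref{thm:even spheres} and \ref{thm:odd spheres}. Both $BSO(m+1)$ and $B\aut(\tau_{S^m})_L$ are nilpotent spaces of finite $\QQ$-type, so it suffices to prove that the map \eqref{eq:so} induces an isomorphism on rational cohomology. Throughout I would use the following description of the map: for the oriented sphere bundle $\pi\colon S(E)\to B$ of an oriented rank-$(m+1)$ vector bundle $E\to B$, the fibrewise tangent bundle $\zeta$ satisfies $\zeta\oplus\varepsilon^1\cong\pi^*E$, so that $p_j(\zeta)=\pi^*p_j(E)$ for all $j$ and $\int_{S^m}e(\zeta)=\chi(S^m)$. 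Combined with the projection formula this shows that along \eqref{eq:so} every $\kappa_c$ pulls back to $\int_{S^m}\pi^*c(E)=0$, every $\alpha_c$ (in the odd case) to $c(E)$, the Euler class to $e(E)$, and $\kappa_{ec}$ to $\chi(S^m)\,c(E)$.

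\emph{The case $m=2k$.} Here $H^*(BSO(2k+1);\QQ)=\QQ[p_1,\dots,p_k]$, while by Theorem \ref{thm:even spheres} $H^*(B\aut(\tau_{S^{2k}});\QQ)=\QQ[\kappa_{ep_1},\dots,\kappa_{ep_k},\kappa_{p_r},\dots,\kappa_{p_k}]$ with $|\kappa_{ep_j}|=4j$. The essential point is that $\kappa_{\HL_r},\dots,\kappa_{\HL_k}$ is a regular sequence in this ring. Since $\HL_i=b_ip_i+(\text{decomposable in lower Pontryagin classes})$ with $b_i\ne 0$, and $\kappa$ is additive, $\kappa_{\HL_i}$ is $b_i\kappa_{p_i}$ plus a $\QQ$-combination of classes $\kappa_{p_\lambda}$ for partitions $\lambda\vdash i$ of length $\ge 2$; using the Wang/Gysin description of the cohomology of a spherical fibration (equivalently the cocycle formulas of Theorem \ref{thm:sullivan model} to control the $\kappa_{p_\lambda}$) one checks that $\kappa_{ep_1},\dots,\kappa_{ep_k},\kappa_{\HL_r},\dots,\kappa_{\HL_k}$ is again a polynomial generating system, so in particular a regular sequence, and therefore
$$H^*(B\aut(\tau_{S^{2k}})_L;\QQ)\;\cong\;\QQ[\kappa_{ep_1},\dots,\kappa_{ep_k}].$$
By the first paragraph (family Gauss--Bonnet) $\kappa_{ep_j}$ pulls back along \eqref{eq:so} to $\chi(S^{2k})\,p_j=2p_j$, so the map induces the isomorphism $\QQ[\kappa_{ep_1},\dots,\kappa_{ep_k}]\xrightarrow{\ \sim\ }\QQ[p_1,\dots,p_k]$.

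\emph{The case $m=2k+1$.} Now $H^*(BSO(2k+2);\QQ)=\QQ[p_1,\dots,p_k,e]$ with $|e|=2k+2$. Here one cannot argue from the cohomology ring of Theorem \ref{thm:odd spheres} alone: $B\aut(\tau_{S^{2k+1}})$ is not rationally formal, and building the homotopy fibre $B\aut(\tau_{S^{2k+1}})_L$ out of $(H^*(B\aut(\tau_{S^{2k+1}});\QQ),0)$ would produce far too large an answer, so the non-formality is used essentially. I would instead return to the relative Sullivan model of the universal $\tau_{S^{2k+1}}$-fibration from Theorem \ref{thm:sullivan model} and extract from it a Sullivan model $(\Lambda V,d)$ of $B\aut(\tau_{S^{2k+1}})$ in which $\alpha_{p_1},\dots,\alpha_{p_{r-1}}$ and $e$ are genuine cocycles, the tautological class $\kappa_{p_j}$ is represented by a closed generator $w_j$ of degree $4j-m$ for $r\le j\le k$, and each relation $e\cdot\kappa_{p_j}=0$ is witnessed by a generator $t_j$ of degree $4j$ with $dt_j=-e\,w_j$ (plus further generators accounting for the rest of $R^*$). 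A Sullivan model of $B\aut(\tau_{S^{2k+1}})_L$ is then $(\Lambda V\otimes\QQ[y_r,\dots,y_k],d)$ with $|y_i|=4i-m-1$ and $dy_i$ a cocycle representing $\kappa_{\HL_i}$; since the Jacobian $\partial(\HL_r,\dots,\HL_k)/\partial(p_r,\dots,p_k)$ is triangular with nonzero diagonal, after a triangular change of the $y_i$ we may take $dy_i=w_i$. Then $d(t_j+e\,y_j)=-e\,w_j+e\,w_j=0$, so $s_j:=t_j+e\,y_j$ is a cocycle of degree $4j$, and I would prove \eqref{eq:so} a rational equivalence by constructing the comparison CDGA map $(\Lambda V\otimes\QQ[y_\bullet],d)\to(\QQ[p_1,\dots,p_k,e],0)$ sending $\alpha_{p_j}\mapsto p_j$ $(j<r)$, $e\mapsto e$, $t_j\mapsto p_j$, $w_j\mapsto 0$, $y_i\mapsto 0$ and all other generators to $0$ (so that $s_j\mapsto p_j$), and checking that it is a quasi-isomorphism by filtering by powers of the ideal generated by the generators other than $\alpha_{p_1},\dots,\alpha_{p_{r-1}},e,s_r,\dots,s_k$: the associated graded becomes, over $\QQ[p_1,\dots,p_{r-1},e,s_r,\dots,s_k]$, a tensor product of acyclic two-term complexes --- the Koszul pairs $(y_i,w_i)$ with $dy_i=w_i$, and the remaining $(w_\lambda,t_\lambda)$, which become acyclic once the $s_j$ are available as coefficients.

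The technical heart, and the step I expect to be the main obstacle, is this odd case: one must pin down the non-formal model $(\Lambda V,d)$ coming from Theorem \ref{thm:sullivan model} precisely enough to see that adjoining the \emph{finitely many} trivialisation classes $y_r,\dots,y_k$ collapses the rationally infinite, highly non-formal homotopy type of $B\aut(\tau_{S^{2k+1}})$ down to that of $BSO(2k+2)$, and in particular to see how the Pontryagin classes $p_r,\dots,p_k$ --- which vanish on the image of \eqref{eq:so} before the $\kappa_{\HL_i}$ are trivialised --- are resurrected in $H^*(B\aut(\tau_{S^{2k+1}})_L;\QQ)$ as the corrected products $t_j+e\,y_j$. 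By comparison the even case is routine, its only substantive input being the regularity of $\kappa_{\HL_r},\dots,\kappa_{\HL_k}$, which rests on the leading-term identity $\kappa_{\HL_i}\equiv b_i\kappa_{p_i}$ modulo lower terms.
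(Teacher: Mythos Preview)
Your approach is essentially the paper's, and it is correct, but you have substantially overestimated the difficulty of the odd case. The Sullivan model of $B\aut(\tau_{S^{2k+1}})$ coming out of Theorem \ref{thm:sullivan model} is already the \emph{finitely generated} cdga
\[
R=\Lambda\big(p_1,\ldots,p_k,\;p_r^x,\ldots,p_k^x,\;z\big),\qquad dp_j=p_j^x\,z\ \ (r\le j\le k),
\]
which in your notation reads $\alpha_{p_j}=p_j$ for $j<r$, $e=z$, $w_j=p_j^x$, and $t_j=p_j$ for $j\ge r$. There are no ``further generators accounting for the rest of $R^*$'', and the rational homotopy type is not ``rationally infinite'': the space has only finitely many nonzero rational homotopy groups. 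Once this is seen, the paper simply passes to $\HL$-class generators (so that $\kappa_{\HL_i}$ is literally the generator $\HL_i^x$), adjoins $M_i$ with $dM_i=\HL_i^x$, and reads off directly that $H^*(R_L)$ is polynomial on $\HL_1,\ldots,\HL_{r-1}$ and $\HL_j-M_j z$ for $r\le j\le k$ --- exactly your corrected classes $s_j=t_j+e\,y_j$. No filtration or spectral-sequence argument is needed; the ``technical heart'' you flagged evaporates. Your even case and the identification of the induced map on cohomology match the paper's argument.
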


As an application, this yields criteria for when a $\tau_{S^m}$-fibration is rationally equivalent to an $SO(m+1)$-bundle with fiber $S^m$. 

\begin{corollary} \label{cor:sphere bundle}
A $\tau_{S^m}$-fibration
$$S^m \to E \xrightarrow{\pi} B,\quad \zeta \to E,$$
is rationally equivalent to an $SO(m+1)$-bundle if and only if
$$\kappa_{\HL_i}(\pi,\zeta) = 0$$
for $m/4<i\leq m/2$.
\end{corollary}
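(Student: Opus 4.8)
The plan is to read off the corollary from Theorem~\ref{thm:spheres} together with the universal property of the homotopy fibre $B\aut(\tau_{S^m})_L$. Let $f\colon B\to B\aut(\tau_{S^m})$ classify the $\tau_{S^m}$-fibration $(\pi,\zeta)$. By construction $L$ records the classes $\kappa_{\HL_i}$, so the composite $L\circ f\colon B\to \prod_{m/4<i\le m/2}K(\QQ,4i-m)$ represents exactly the tuple $\bigl(\kappa_{\HL_i}(\pi,\zeta)\bigr)_{m/4<i\le m/2}$. A map into a product of rational Eilenberg--MacLane spaces is determined up to homotopy by the cohomology classes it represents, so the hypothesis that these $\kappa_{\HL_i}(\pi,\zeta)$ vanish is equivalent to $L\circ f$ being null-homotopic after rationalisation, and hence --- via the defining fibration sequence $B\aut(\tau_{S^m})_L\to B\aut(\tau_{S^m})\xrightarrow{\ L\ }\prod_{m/4<i\le m/2}K(\QQ,4i-m)$ --- equivalent to $f$ admitting, rationally, a lift $\widetilde f\colon B\to B\aut(\tau_{S^m})_L$ along the projection $B\aut(\tau_{S^m})_L\to B\aut(\tau_{S^m})$.

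Granting this reduction, both implications are formal. If such a lift $\widetilde f$ exists then, since \eqref{eq:so} is a rational homotopy equivalence by Theorem~\ref{thm:spheres}, $\widetilde f$ is rationally homotopic to a composite $B\to BSO(m+1)\to B\aut(\tau_{S^m})_L$; composing further with the projection $B\aut(\tau_{S^m})_L\to B\aut(\tau_{S^m})$ exhibits $f$, rationally, as the classifying map of the $\tau_{S^m}$-fibration underlying an oriented rank-$(m+1)$ vector bundle over $B$, i.e.\ $(\pi,\zeta)$ is rationally equivalent to an $SO(m+1)$-bundle. Conversely, if $(\pi,\zeta)$ is rationally equivalent to the oriented sphere bundle of an oriented rank-$(m+1)$ vector bundle, then $f$ factors rationally through the composite of \eqref{eq:so} with the projection, $BSO(m+1)\to B\aut(\tau_{S^m})_L\to B\aut(\tau_{S^m})$; post-composing with $L$ kills it, because \eqref{eq:so} already lands in the homotopy fibre $B\aut(\tau_{S^m})_L$ --- this is precisely the factorisation recalled just before Theorem~\ref{thm:spheres}, which encodes the family signature theorem \cite[Theorem~A.2]{KRW}. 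Hence $\kappa_{\HL_i}(\pi,\zeta)=0$ for $m/4<i\le m/2$.

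The only step that is not purely formal --- and the one I expect to require the most care --- is the reduction in the first paragraph: that vanishing of the recorded cohomology classes actually produces a \emph{rational} lift of $f$ through the homotopy fibre. This rests on rationalisation being compatible with the fibration sequence defining $B\aut(\tau_{S^m})_L$; since its base is a product of rational Eilenberg--MacLane spaces, hence $\QQ$-local and nilpotent, the needed compatibility is standard and I would simply invoke it. (If one wanted an honest, rather than merely rational, vector bundle in the backward direction one would also need a finiteness hypothesis on $B$, but the statement only asks for rational equivalence.) Alternatively, the whole argument can be run on the relative Sullivan models furnished by Theorem~\ref{thm:sullivan model}: there the model of $B\aut(\tau_{S^m})_L$ over that of $B\aut(\tau_{S^m})$ adjoins free generators $y_i$ with $dy_i$ a cocycle representative of $\kappa_{\HL_i}$, so a lift of the classifying map amounts to a choice of primitives for the $\kappa_{\HL_i}(\pi,\zeta)$ --- which exist exactly when those classes vanish --- making the obstruction entirely transparent.
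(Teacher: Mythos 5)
Your skeleton is the intended one: the paper offers no separate proof of this corollary, which is meant to follow formally from Theorem~\ref{thm:spheres}, the definition of $B\aut(\tau_{S^m})_L$ as the homotopy fibre of $L$, and the factorisation of \eqref{eq:so} through $B\aut(\tau_{S^m})_L$ recorded just before the theorem. Your reduction is fine (in fact you get an \emph{honest} lift, not just a rational one: since the target of $L$ is already a product of rational Eilenberg--Mac\,Lane spaces, vanishing of the classes is equivalent to $L\circ f$ being null-homotopic, and a null-homotopy produces a lift through the homotopy fibre on the nose), and the converse is fine once you add that one pulls back along the rational equivalence $g\colon B'\to B$ appearing in the paper's definition and uses naturality of the $\kappa$-classes together with injectivity of $g^*$ on rational cohomology.

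The one step that does not hold as written is the claim that, because \eqref{eq:so} is a rational homotopy equivalence, $\widetilde f$ is rationally homotopic to a composite $B\to BSO(m+1)\to B\aut(\tau_{S^m})_L$. For an arbitrary base $B$ there need not exist any map to the honest space $BSO(m+1)$ with this property: you would be trying to lift a map $B\to BSO(m+1)_\QQ$ back along $BSO(m+1)\to BSO(m+1)_\QQ$, which in general requires finiteness hypotheses on $B$. Your parenthetical dismissal of this point misreads the definition: ``rationally equivalent to an $SO(m+1)$-bundle'' in this paper means an \emph{honest} $SO(m+1)$-bundle over some $B'$, together with a rational homotopy equivalence $B'\to B$ pulling $(\pi,\zeta)$ back to its underlying $\tau_{S^m}$-fibration. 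The repair is short and is surely what the author intends: let $B'$ be the homotopy pullback of $\widetilde f\colon B\to B\aut(\tau_{S^m})_L$ along \eqref{eq:so}. Then $B'\to B$ is a rational equivalence, since its homotopy fibre agrees with that of \eqref{eq:so}, which is rationally trivial by Theorem~\ref{thm:spheres} (the relevant spaces being nilpotent); the other leg $B'\to BSO(m+1)$ classifies an honest oriented rank-$(m+1)$ vector bundle, and homotopy commutativity of the square together with the factorisation of \eqref{eq:so} through $B\aut(\tau_{S^m})_L\to B\aut(\tau_{S^m})$ identifies the underlying $\tau_{S^m}$-fibration of its sphere bundle with the pullback of $(\pi,\zeta)$ to $B'$. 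With this substitution your argument is complete; your closing remark on the Sullivan-model alternative also matches the model $R_L$ constructed in \S4, where the generators $M_i$ are exactly primitives for cocycle representatives of the $\kappa_{\HL_i}$.
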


To be precise, what we mean when we say that a $\tau_{S^m}$-fibration $(\pi,\zeta)$ is rationally equivalent to an $SO(m+1)$-bundle is that there exists a rational homotopy equivalence $f\colon B' \to B$ such that the pulled back $\tau_{S^m}$-fibration $f^*(\pi,\zeta)$ is equivalent to the underlying $\tau_{S^m}$-fibration of an $SO(m+1)$-bundle over $B'$ with fiber $S^m$.

As another application, we note that computations over $B\aut(\tau_M)$ can yield information about the tautological ring of $M$ in the sense of \cite{GGRW}, i.e., the subring $R^*(M)$ of $H^*(B\Diff^+(M);\QQ)$ generated by the $\kappa$-classes.
Indeed, the map
$$B\Diff^+(M) \to B\aut(\tau_M)$$
that classifies the $\tau_M$-fibration associated to the universal smooth oriented $M$-bundle induces a surjective ring homomorphism
$$R^*(\tau_M) \to R^*(M).$$
Thus, in principle, the ring $R^*(M)$ can be carved out of $R^*(\tau_M)$ by imposing further relations.
As a simple illustration of this point, we give an alternative calculation of the tautological ring of $S^m$ for $m$ even, cf.~\cite[Theorem 1.1(i)]{GGRW}.

\begin{corollary} \label{cor:tautological ring}
The ring homomorphism
$$H^*(B\aut(\tau_{S^{m}})_L;\QQ) \to H^*(B\Diff^+(S^{m});\QQ).$$
is split injective. For $m=2k$ even, the image is the tautological ring $R^*(S^{m})$, and
$$R^*(S^{m}) \cong R^*(\tau_{S^m})/(\kappa_{\HL_r},\ldots,\kappa_{\HL_k}) \cong \QQ[\kappa_{e\HL_1},\ldots,\kappa_{e\HL_k}].$$
\end{corollary}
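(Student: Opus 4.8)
The plan is to derive the statement from Theorems~\ref{thm:even spheres} and~\ref{thm:spheres} together with one structural fact about $\kappa$-classes of products. For split injectivity: the unit sphere bundle $S(V)$ of the universal oriented rank $m+1$ vector bundle over $BSO(m+1)$ is a smooth fibre bundle, hence classified by a map $s\colon BSO(m+1)\to B\Diff^+(S^m)$; by the family signature theorem (cf.~\cite[Theorem~A.2]{KRW}) the classes $\kappa_{\HL_i}$ vanish on every smooth $S^m$-bundle for $i>m/4$, so $B\Diff^+(S^m)\to B\aut(\tau_{S^m})$ lifts along $B\aut(\tau_{S^m})_L\to B\aut(\tau_{S^m})$ to a map $q\colon B\Diff^+(S^m)\to B\aut(\tau_{S^m})_L$, and by construction $q\circ s$ is the map~\eqref{eq:so} of Theorem~\ref{thm:spheres}, a rational homotopy equivalence. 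Hence $s^*q^*$ is an isomorphism on rational cohomology, $q^*$ is split injective with retraction $(s^*q^*)^{-1}s^*$, and $\im q^*$ is carried isomorphically onto $H^*(B\aut(\tau_{S^m})_L;\QQ)$.

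Next fix $m=2k$ and let $\pi\colon E\to B\aut(\tau_{S^{2k}})$ be the universal orientable $\tau_{S^{2k}}$-fibration with total bundle $\zeta$, so that $\kappa_c=\pi_!(c(\zeta))$ with $\pi_!$ integration along the fibre. By Theorem~\ref{thm:even spheres}, $H^*(B\aut(\tau_{S^{2k}});\QQ)=R^*(\tau_{S^{2k}})$ is a polynomial ring concentrated in even degrees, so the rational Euler class of the underlying spherical fibration of $\pi$ vanishes, the Gysin sequence splits, and $H^*(E;\QQ)$ is a free $H^*(B\aut(\tau_{S^{2k}});\QQ)$-module on $1$ and a fibrewise fundamental class $\omega$ of degree $2k$ with $\pi_!\omega=1$; thus $p_j(\zeta)=\pi^*\bar p_j+\pi^*\kappa_{p_j}\cdot\omega$ for some $\bar p_j$, and $\omega^2=\pi^*\lambda+\pi^*\mu\cdot\omega$ for some $\lambda,\mu$. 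Expanding $p_{j_1}(\zeta)\cdots p_{j_l}(\zeta)=\prod_a(\pi^*\bar p_{j_a}+\pi^*\kappa_{p_{j_a}}\cdot\omega)$ in the basis $\{1,\omega\}$ and reducing higher powers of $\omega$ via $\omega^2=\pi^*\lambda+\pi^*\mu\cdot\omega$, every monomial contributing to the $\omega$-component must have selected the summand $\pi^*\kappa_{p_{j_a}}\cdot\omega$ from some factor, hence carries the factor $\kappa_{p_{j_a}}$; therefore $\kappa_{p_{j_1}\cdots p_{j_l}}\in(\kappa_{p_{j_1}},\ldots,\kappa_{p_{j_l}})$. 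Since $\HL_i=s_ip_i+D_i$ with $s_i\neq0$ and $D_i$ a sum of products of at least two Pontryagin classes of total weight $i$ (so of weights $<i$), this gives $\kappa_{\HL_i}\equiv s_i\kappa_{p_i}\pmod{(\kappa_{p_r},\ldots,\kappa_{p_{i-1}})}$, and induction on $i$ from $r$ to $k$ shows $(\kappa_{\HL_r},\ldots,\kappa_{\HL_k})=(\kappa_{p_r},\ldots,\kappa_{p_k})$ as ideals of $R^*(\tau_{S^{2k}})$.

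To conclude, I would evaluate on the universal oriented sphere bundle $S(V)\to BSO(2k+1)$: its vertical tangent bundle satisfies $\zeta\oplus\underline{\RR}\cong\pi^*V$, so $p_j(\zeta)=\pi^*p_j$, and along the classifying map $BSO(2k+1)\to B\aut(\tau_{S^{2k}})$ one has $\kappa_{p_j}\mapsto0$, $\kappa_{ep_j}\mapsto\pi_!(e(\zeta)\pi^*p_j)=\chi(S^{2k})\,p_j=2p_j$, and $\kappa_{e\HL_i}\mapsto2\HL_i$. By Theorem~\ref{thm:spheres} the induced map $H^*(B\aut(\tau_{S^{2k}})_L;\QQ)\to H^*(BSO(2k+1);\QQ)=\QQ[p_1,\ldots,p_k]$ is an isomorphism, and since $\HL_1,\ldots,\HL_k$ are polynomial generators of $\QQ[p_1,\ldots,p_k]$ the composite $R^*(\tau_{S^{2k}})\to H^*(B\aut(\tau_{S^{2k}})_L;\QQ)\to\QQ[p_1,\ldots,p_k]$ is surjective with kernel exactly $(\kappa_{p_r},\ldots,\kappa_{p_k})$. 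Combined with the ideal identity of the previous paragraph, this gives
\[ H^*(B\aut(\tau_{S^{2k}})_L;\QQ)\cong R^*(\tau_{S^{2k}})/(\kappa_{p_r},\ldots,\kappa_{p_k})=R^*(\tau_{S^{2k}})/(\kappa_{\HL_r},\ldots,\kappa_{\HL_k})\cong\QQ[\kappa_{e\HL_1},\ldots,\kappa_{e\HL_k}]. \]
As this ring is generated by $\kappa$-classes, $\im q^*$ is the subring of $H^*(B\Diff^+(S^{2k});\QQ)$ generated by $\kappa$-classes, namely $R^*(S^{2k})$, and the displayed isomorphisms complete the proof.

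The step requiring the most care is the ideal identity $(\kappa_{\HL_r},\ldots,\kappa_{\HL_k})=(\kappa_{p_r},\ldots,\kappa_{p_k})$, whose only non-formal ingredient is the $H^*(B\aut(\tau_{S^{2k}});\QQ)$-module structure of $H^*(E;\QQ)$ --- the freeness on $\{1,\omega\}$ and the quadratic relation for $\omega^2$. I have extracted it here from the vanishing of the rational spherical Euler class, but it is equally a direct consequence of the explicit Sullivan model of Theorem~\ref{thm:sullivan model}; either way it is light. Everything else --- the factorisation through $B\Diff^+(S^m)$, the triangular induction, and the sphere-bundle evaluations --- is routine bookkeeping once Theorems~\ref{thm:even spheres} and~\ref{thm:spheres} are in hand.
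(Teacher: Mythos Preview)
Your proof is correct and the split--injectivity argument is exactly the paper's. The difference lies in how you pass from the Pontryagin--class generators of Theorem~\ref{thm:even spheres} to the $\HL$-class generators needed for the statement. The paper simply remarks that the explicit Sullivan model computation behind Theorem~\ref{thm:even spheres} works verbatim with any system of polynomial generators for $H^*(BSO(m);\QQ)$, so one may write $R^*(\tau_{S^{2k}})=\QQ[\kappa_{e\HL_1},\ldots,\kappa_{e\HL_k},\kappa_{\HL_r},\ldots,\kappa_{\HL_k}]$ from the outset; the $\kappa_{\HL_i}$ then form a regular sequence and the quotient is immediate. You instead keep the Pontryagin generators and prove the ideal equality $(\kappa_{\HL_r},\ldots,\kappa_{\HL_k})=(\kappa_{p_r},\ldots,\kappa_{p_k})$ from the $H^*(B)$-module structure of $H^*(E)$, and then invoke Theorem~\ref{thm:spheres} together with the explicit evaluation on the sphere bundle over $BSO(2k+1)$ to identify the kernel. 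Your route is longer but uses only the \emph{statements} of Theorems~\ref{thm:even spheres} and~\ref{thm:spheres}, whereas the paper's shortcut appeals to the \emph{proof} of Theorem~\ref{thm:even spheres}. One consequence worth noting: in the paper's global logic the computation of $H^*(B\aut(\tau_{S^m})_L;\QQ)$ here is deliberately done \emph{without} Theorem~\ref{thm:spheres}, because it is later used to prove Theorem~\ref{thm:spheres} for $m$ even (see \S4.1); your argument would be circular in that setting, though it is perfectly valid as a standalone proof of the corollary once both theorems are granted.
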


\begin{proof}
The map \eqref{eq:so} factors as
$$BSO(m+1) \to B\Diff^+(S^m) \to B\aut(\tau_{S^m})_L,$$
inducing
$$H^*(B\aut(\tau_{S^{m}})_L;\QQ) \to H^*(B\Diff^+(S^m);\QQ) \to H^*(BSO(m+1);\QQ).$$
The composite is a ring isomorphism by Theorem \ref{thm:spheres}. This proves the first claim.

Now assume $m$ is even. Since all characteristic classes of $\tau_{S^m}$-fibrations are tautological, the image is $R^*(S^m)$.
As will be clear from the proof, one can replace the Pontryagin classes by the Hirzebruch $L$-classes in Theorem 1.1, so that
$$H^*(B\aut(\tau_{S^{m}});\QQ) = R^*(\tau_{S^{m}}) = \QQ[\kappa_{e\HL_1},\ldots,\kappa_{e\HL_k}, \kappa_{\HL_r},\ldots,\kappa_{\HL_k}].$$
Since $\kappa_{\HL_r},\ldots,\kappa_{\HL_k}$ is a regular sequence, it follows that
$$H^*(B\aut(\tau_{S^{m}})_L;\QQ) \cong R^*(\tau_{S^{m}})/(\kappa_{\HL_r},\ldots,\kappa_{\HL_k}) \cong \QQ[\kappa_{e\HL_1},\ldots,\kappa_{e\HL_k}].$$
\end{proof}

For another sample calculation, we turn to complex projective spaces. Consider a fibration
$$\CP^n \to E\xrightarrow{\pi} B$$
which is orientable in the sense that $\pi_1(B)$ acts trivially on the cohomology of the fiber.
For every choice of generator $\omega\in H^2(\CP^n;\QQ)$, there is a unique cohomology class $\omega_{fw}(\pi) \in H^2(E;\QQ)$ (the `coupling class') such that 
$\omega_{fw}(\pi)|_{\CP^n} = \omega$ and
$$\int_{\CP^n} \omega_{fw}(\pi)^{n+1} = 0.$$
For definiteness, we fix the generator $\omega = -c_1(\gamma^1)$, the negative of the first Chern class of the canonical line bundle $\gamma^1$.

By a standard argument, it follows that $H^*(E;\QQ)$ is a free $H^*(B;\QQ)$-module with basis $1,\, \omega_{fw}(\pi), \ldots,\, \omega_{fw}(\pi)^n$.
A key observation is that $\omega_{fw}(\pi)$ is natural in the fibration $\pi$ (see Lemma \ref{lemma:fw generator}).
Inspired by Grothendieck's approach to Chern classes (cf.~\cite{Grothendieck} or \cite[\S20]{BT}), we can then define characteristic classes of orientable 
$\CP^n$-fibrations $\pi\colon E\to B$,
$$a_i(\pi) \in H^{2i}(B;\QQ),$$
by postulating the equality
$$\omega_{fw}(\pi)^{n+1} + a_2(\pi) \cdot \omega_{fw}(\pi)^{n-1} +\ldots + a_{n+1}(\pi)\cdot 1 = 0$$
in the cohomology of the total space (we set $a_0(\pi) =1$ and $a_1(\pi) = 0$)\footnote{For the precise relation between the classes $a_i$ and the Chern classes in the case when the fibration arises through projectivization of a complex vector bundle, see Proposition \ref{prop:a vs chern}.}.

Similarly, if $\xi$ is a bundle over $\CP^n$ with structure group $G$, then for every class $x\in H^\ell(BG;\QQ)$ we can define characteristic classes
$$x_{|j}(\pi,\zeta) \in H^{\ell-2j}(B;\QQ)$$
of orientable $\xi$-fibrations
$$\CP^n \to E \xrightarrow{\pi} B,\quad \zeta \to E,$$
by postulating the equality 
$$x(\zeta) =  x_{|n}(\pi,\zeta) \cdot \omega_{fw}(\pi)^n + \ldots + x_{|1}(\pi,\zeta) \cdot \omega_{fw}(\pi) + x_{|0}(\pi,\zeta)\cdot 1 = 0$$
in the cohomology of the total space.

\begin{theorem} \label{thm:pu}
Let $\xi$ be a bundle over $\CP^n$ with structure group $G$. Assume that $G$ is connected and that the cohomology of $BG$ is an evenly graded polynomial ring, say $H^*(BG;\QQ) = \QQ[p_1,p_2,\ldots]$ with $|p_i|=2r_i$.
\begin{enumerate}
\item The ring of characteristic classes of orientable $\xi$-fibrations,
$$H^*(B\aut_\circ(\xi);\QQ),$$
may be identified with the polynomial ring
$$\QQ[a_2,\ldots,a_{n+1},p_{i|j}],$$
generated by $a_2,\ldots, a_{n+1}$ and $p_{i|j}$ for all $i$ and all $j$ such that $0\leq j < r_i$.

\item The ring of characteristic classes of $\xi$-fibrations may be identified with the invariant subring
$$H^*(B\aut(\xi);\QQ) = \QQ[a_2,\ldots,a_{n+1},p_{i|j}]^{\Gamma(\xi)},$$
where
$$\Gamma(\xi) \cong \left\{ \begin{array}{cc} \ZZ/2\ZZ, & c^*(\xi)\cong \xi , \\ 0, & c^*(\xi) \not\cong \xi, \end{array} \right.$$
where $c\colon \CP^n \to \CP^n$ denotes complex conjugation, and where the non-trivial element of $\Gamma(\xi)$
acts by
$$a_k\mapsto (-1)^k a_k, \quad p_{i|j}\mapsto (-1)^j p_{i|j},$$
in the case when $c^*(\xi) \cong \xi$.
\end{enumerate}
\end{theorem}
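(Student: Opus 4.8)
The plan is to derive Theorem \ref{thm:pu} from the general machinery of Theorem \ref{thm:sullivan model}, applied to the bundle $\xi$ over the Poincar\'e duality space $X = \CP^n$, and to identify the resulting model explicitly using the coupling class $\omega_{fw}$. First I would recall that $B\aut_\circ(\xi)$ is the base of the universal \emph{orientable} $\xi$-fibration, and that its rational cohomology is computed by the relative Sullivan model of Theorem \ref{thm:sullivan model}: the cohomology of the total space of the universal fibration is a free module over $H^*(B\aut_\circ(\xi);\QQ)$ on the classes $1, \omega_{fw}, \ldots, \omega_{fw}^n$ (this is the ``standard argument'' invoked in the text, using that $H^*(\CP^n;\QQ)$ is a truncated polynomial ring and that a formal dimension / Poincar\'e duality argument produces the fiberwise K\"ahler class). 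The generators $a_2, \ldots, a_{n+1}$ arise exactly as the coefficients expressing $\omega_{fw}^{n+1}$ in this basis (by the defining relation), and the $p_{i|j}$ for $0 \le j < r_i$ arise as the coefficients of $p_i(\zeta)$; the key point is that the $j = r_i$ (top) coefficient is \emph{not} an independent generator — it is forced by the model, essentially because the attaching data of the top cell of $\CP^n$ together with the normalization $\int_{\CP^n}\omega_{fw}^{n+1} = 0$ eliminates one coefficient. I would verify, via Theorem \ref{thm:sullivan model}, that these are algebraically independent and generate, so that part (1) follows: $H^*(B\aut_\circ(\xi);\QQ) = \QQ[a_2, \ldots, a_{n+1}, p_{i|j} : 0 \le j < r_i]$.

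For part (2), the relation between $B\aut(\xi)$ and $B\aut_\circ(\xi)$ is a covering-type fibration: $B\aut_\circ(\xi) \to B\aut(\xi) \to B\pi_0\aut(\xi)$, and rationally the cohomology of $B\aut(\xi)$ is the invariants of $H^*(B\aut_\circ(\xi);\QQ)$ under the group $\Gamma(\xi) = \pi_0\aut(\xi)$ acting through its (rationally relevant) quotient. So the plan is: (a) identify $\Gamma(\xi)$ as a group, and (b) compute its action on the polynomial generators. For (a), a self-equivalence of $\CP^n$ is, up to homotopy, either the identity or complex conjugation $c$ (since $\Out(\pi_*(\CP^n)) $ acting on $H^2$ is $\{\pm 1\}$, and $-1$ is realized by $c$); an automorphism of $\xi$ covering $c$ exists precisely when $c^*\xi \cong \xi$, which gives the stated dichotomy $\ZZ/2$ versus $0$. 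For (b), when $c^*\xi \cong \xi$ I would track how $c$ acts on the coupling class: since $c^*\omega = -\omega$ (as $c^*c_1(\gamma^1) = -c_1(\gamma^1)$) and $\omega_{fw}$ is natural (Lemma \ref{lemma:fw generator}), the induced automorphism sends $\omega_{fw} \mapsto -\omega_{fw}$; feeding this into the defining relations $\omega_{fw}^{n+1} + \sum a_k \omega_{fw}^{n+1-k} = 0$ and $p_i(\zeta) = \sum_j p_{i|j}\,\omega_{fw}^j$ and matching coefficients gives $a_k \mapsto (-1)^k a_k$ and $p_{i|j} \mapsto (-1)^j p_{i|j}$ (the extra sign on $p_{i|j}$ comes solely from the $\omega_{fw}^j$ factor, as $c$ acts trivially on $H^*(BG;\QQ)$ up to the identification $c^*\xi\cong\xi$).

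I expect the main obstacle to be two interlocking technical points. The first is verifying rigorously, from Theorem \ref{thm:sullivan model}, that exactly one coefficient among $\{p_{i|j}\}_{0 \le j \le r_i}$ (namely the top one $p_{i|r_i}$) is determined by the others rather than being a free generator, and similarly that $a_1 = 0$ is forced — this requires unwinding the explicit cocycle representatives in the relative Sullivan model and checking that the differential kills precisely the expected number of classes; getting the counting exactly right (degrees $2i$ for $a_i$, $\ell - 2j$ for $p_{i|j}$, and the polynomiality/freeness) is where the real work lies. The second is justifying the passage to invariants in part (2): one must check that $\pi_0\aut(\xi)$ acts on $B\aut_\circ(\xi)$ through a \emph{finite} group acting rationally, so that transfer gives $H^*(B\aut(\xi);\QQ) = H^*(B\aut_\circ(\xi);\QQ)^{\Gamma(\xi)}$ — here the finiteness of the group of homotopy classes of self-equivalences of $\CP^n$ acting on rational cohomology is what makes this work, and one should confirm that the $\ZZ/2$ generated by $c$ (when relevant) indeed captures the whole rationally-acting quotient, with no contribution from the identity component or from bundle automorphisms covering the identity.
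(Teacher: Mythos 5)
Your proposal follows essentially the same route as the paper: part (1) is proved by specializing Theorem \ref{thm:sullivan model} to the minimal model of $\CP^n$ (the paper makes the verification you defer tractable by replacing $\Der \Lambda \langle 1 \rangle$ with a quasi-isomorphic abelian subalgebra and the Sullivan coefficients by cohomology, so that $C^*(\hl^\xi)$ is visibly a polynomial ring with zero differential) and then identifying the generators with $a_k$ and $p_{i|j}$ exactly as you indicate, via the coupling class and the cocycle formula for $p_i(\zeta)$, while part (2) is the fibration $B\aut_\circ(\xi) \to B\aut(\xi) \to B\Gamma(\xi)$ plus invariants under the finite group $\Gamma(\xi)$, with the signs computed in the paper from the involution $x\mapsto -x$, $y\mapsto (-1)^{n+1}y$ on the minimal model, which is equivalent to your naturality argument for $\omega_{fw}$. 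One small correction: the top coefficient $p_{i|r_i}$ fails to be a generator simply because it has degree zero, hence is a constant determined by $p_i(\xi)$ (in the model, the would-be dual generator is removed by the truncation $\langle 0 \rangle$), not because of the normalization $\pi_!(\omega_{fw}^{n+1})=0$, which is instead what forces $a_1=0$.
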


For example, since complex conjugation is an orientation preserving diffeomorphism of $\CP^2$ we have $c^*(\tau_{\CP^2}^\RR) \cong \tau_{\CP^2}^{\RR}$, where the latter denotes the underlying oriented vector bundle of the complex tangent bundle, whence
$$
H^*(B\aut(\tau_{\CP^2}^\RR);\QQ) \cong \QQ[a_2, p_{1|0}, e_{|0}, p_{1|1}^2, p_{1|1}e_{|1} ,e_{|1}^2, p_{1|1}a_3, e_{|1}a_3, a_3^2].
$$
This ring is abstractly isomorphic to
$$\QQ[u,v,w,a,b,c,d,e,f]/(ac-b^2,af-d^2,cf-e^2).$$
In particular, it is a complete intersection of Krull dimension $6$ and embedding dimension $9$. 

By counting dimensions, one quickly realizes that there are not enough tautological classes to generate $H^*(B\aut_\circ(\xi);\QQ)$ in general. However, if we extend the set of tautological classes by defining
$$\kappa_{\omega^n c}(\pi,\zeta) = \int_{\CP^n} \omega_{fw}(\pi)^n c(\zeta),$$
for $n\geq 0$ and $c\in H^*(BG;\QQ)$, then we have the following.

\begin{theorem} \label{thm:extended tautological}
With hypotheses as in Theorem \ref{thm:pu}, the ring of characteristic classes of orientable $\xi$-fibrations is a polynomial ring in the classes
$$\kappa_{\omega^{n+2}},\ldots, \kappa_{\omega^{2n+1}},$$
$$\kappa_{\omega^\ell p_i},\quad n-r_i+1\leq \ell \leq n,\quad i=1,2,\ldots.$$
In particular, all such characteristic classes are tautological in the extended sense.
\end{theorem}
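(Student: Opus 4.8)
The plan is to run the explicit presentation of Theorem~\ref{thm:pu}(1) against the fact that fiber integration in the universal orientable $\xi$-fibration is purely combinatorial once one records the free-module structure on the cohomology of the total space. I would put $B = B\aut_\circ(\xi)$, take the universal orientable $\CP^n$-fibration $\pi\colon E\to B$, abbreviate $\omega = \omega_{fw}(\pi)$, and use that $H^*(E;\QQ)$ is free over $H^*(B;\QQ)$ on $1,\omega,\dots,\omega^n$ subject to $\omega^{n+1}+a_2\omega^{n-1}+\dots+a_{n+1}=0$, while the universal total bundle $\zeta$ satisfies $p_i(\zeta)=\sum_{j=0}^{n}p_{i|j}\,\omega^j$. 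For $\ell\geq 0$ let $s_\ell\in H^{2(\ell-n)}(B;\QQ)$ be the coefficient of $\omega^n$ in the reduction of $\omega^\ell$ against this basis, so that $\kappa_{\omega^\ell}=s_\ell$ and, since the $p_{i|j}$ live on $B$, $\kappa_{\omega^\ell p_i}=\sum_{j=0}^{n}p_{i|j}\,s_{\ell+j}$. The task then reduces to showing that the classes in the statement form a second polynomial generating set of $H^*(B;\QQ)=\QQ[a_2,\dots,a_{n+1},p_{i|j}]$.

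First I would extract a recurrence: multiplying the defining relation by $\omega^{\ell-n-1}$ and taking the $\omega^n$-coefficient gives $s_\ell=-\sum_{j=2}^{n+1}a_j\,s_{\ell-j}$ for $\ell\geq n+1$, with $s_0=\dots=s_{n-1}=0$ and $s_n=1$. For $2\leq k\leq n+1$ this makes the $j=k$ term contribute $-a_k s_n=-a_k$, the terms with $j<k$ each a product $a_j\cdot s_{n+k-j}$ whose second factor has positive degree (so the product is a polynomial in $a_2,\dots,a_{k-1}$ with no linear term), and the terms with $j>k$ vanish (they involve $s$ of index $<n$). Therefore $s_{n+k}=-a_k+(\text{decomposable in }a_2,\dots,a_{k-1})$, so the matrix of linear parts of $(\kappa_{\omega^{n+2}},\dots,\kappa_{\omega^{2n+1}})=(s_{n+2},\dots,s_{2n+1})$ in the $a_j$ is $-\mathrm{Id}$; hence $a_2,\dots,a_{n+1}$ lie in the subalgebra generated by $\kappa_{\omega^{n+2}},\dots,\kappa_{\omega^{2n+1}}$.

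Next I would handle the $p$-classes. For $0\leq j<r_i$ one has $\kappa_{\omega^{n-j}p_i}=\sum_{k=0}^{n}p_{i|k}\,s_{n-j+k}$, where the summands with $k<j$ vanish (index $<n$), the $k=j$ summand equals $p_{i|j}$, the $k=j+1$ summand vanishes ($s_{n+1}=0$), and each summand with $k\geq j+2$ is $p_{i|k}$ times a positive-degree polynomial in the $a$'s, while $p_{i|k}=0$ for $k>r_i$ and $p_{i|r_i}$ is a scalar. Thus $\kappa_{\omega^{n-j}p_i}=p_{i|j}+(\text{polynomial in the }a\text{'s and the }p_{i|k},\ k>j)$, a strictly triangular system; descending induction on $j$, started from $\kappa_{\omega^{n-r_i+1}p_i}=p_{i|r_i-1}$, then writes every $p_{i|j}$ as a polynomial in the $a$'s and the classes $\kappa_{\omega^\ell p_i}$ with $n-r_i+1\leq\ell\leq n$. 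Combined with the previous step, every generator of $H^*(B;\QQ)$ lies in the subalgebra $S$ generated by the listed $\kappa$-classes, so $S=H^*(B;\QQ)$. To upgrade this to ``polynomial ring'', I would note that $\kappa_{\omega^{n+2}},\dots,\kappa_{\omega^{2n+1}}$ occupy the same degrees $4,6,\dots,2n+2$ as $a_2,\dots,a_{n+1}$, and that for each $i$ the classes $\kappa_{\omega^{n}p_i},\dots,\kappa_{\omega^{n-r_i+1}p_i}$ occupy the same degrees $2r_i,2r_i-2,\dots,2$ as $p_{i|0},\dots,p_{i|r_i-1}$; the induced graded surjection from the free polynomial algebra on the listed $\kappa$'s onto the connected finite-type algebra $H^*(B;\QQ)$ then preserves Hilbert series, hence is an isomorphism, so the $\kappa$'s are algebraically independent and generate.

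The only real obstacle is the bookkeeping around reduction modulo the relation: I must verify that the wrap-around $\omega^{n+1}=-(a_2\omega^{n-1}+\dots+a_{n+1})$ never spoils the triangular structure — it cannot, since every reduction step lowers the $\omega$-power by at least $2$ while introducing a factor $a_j$ with $j\geq 2$ — and I should explain why the excluded classes $\kappa_{\omega^n}=1$, $\kappa_{\omega^{n+1}}=0$ and the $\kappa_{\omega^\ell p_i}$ with $\ell<n-r_i+1$ (which are scalars or zero by degree) carry no new information. The degenerate case $r_i>n$, where the list of $p_{i|j}$ and the range of $\ell$ are both truncated to $0\leq j\leq n$, is a routine variant of the same argument.
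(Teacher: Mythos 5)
Your proposal is correct and follows essentially the same route as the paper: the recurrence for $s_\ell=\pi_!(\omega_{fw}^\ell)$ is exactly the paper's Lemma \ref{lemma:pf} ($\kappa_{\omega^{n+k}}\equiv -a_k$ modulo decomposables), and the triangular identity $\kappa_{\omega^{n-j}p_i}=p_{i|j}+\sum_{k\geq j+2}p_{i|k}\kappa_{\omega^{n-j+k}}$ with descending induction on $j$ is the paper's key displayed equation. The only cosmetic difference is that you conclude algebraic independence by a Hilbert-series comparison with the presentation of Theorem \ref{thm:pu}, whereas the paper reads it off from the linear terms of the $\kappa$-classes; both are fine.
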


\begin{remark}
For the $\tau_{\CP^n}$-fibration $(\pi,\zeta)$ associated to a symplectic $\CP^n$-bundle, the class $\kappa_{\omega^{n+k}c^I}(\pi,\zeta)$, where $c^I = c_1^{m_1}\ldots c_n^{m_n}$ for a multi-index $I=(m_1,\ldots,m_n)$, agrees with the class $\mu_{k,I}$ defined by K\c edra-McDuff \cite[p.147]{KM}.
\end{remark}

\begin{remark}
By letting $\xi$ be the trivial bundle with fiber a point, we recover a result of Kuribayashi \cite{Kuribayashi-elliptic} on the ring of characteristic classes of orientable $\CP^n$-fibrations as a special case of Theorem \ref{thm:extended tautological}: $H^*(B\aut_\circ(\CP^n);\QQ) = \QQ[\mu_2,\ldots,\mu_{n+1}]$.
\end{remark}

As an application of Theorem \ref{thm:pu}, we establish necessary and sufficient conditions for when a $\tau_{\CP^n}$-fibration is rationally equivalent to a $PU(n+1)$-bundle. To do this, we first equip every orientable $\CP^n$-fibration with a cohomological stand-in for a fiberwise tangent bundle.

\begin{definition}
For an orientable fibration
$$\CP^n \to E \xrightarrow{\pi} B,$$
we define the \emph{fiberwise Chern classes} $c_1^{fw}(\pi),\ldots, c_n^{fw}(\pi)$ by the formula
\begin{equation} \label{eq:fiberwise Chern classes}
c_i^{fw}(\pi) = \sum_{j=0}^i \binom{n+1-j}{i-j}a_j(\pi) \omega_{fw}(\pi)^{i-j}\in H^{2i}(E;\QQ).
\end{equation}
\end{definition}
We refer to the computation in Proposition \ref{prop:ucd} for the origin of this formula.
Let $B\aut(\tau_{\CP^n})^c$ denote the classifying space for $\tau_{\CP^n}$-fibrations $(\pi,\zeta)$ with a trivialization of the `Chern difference'
$$cd_i(\pi,\zeta) = c_i^{fw}(\pi) - c_i(\zeta)$$
for every $i$.

\begin{theorem} \label{thm:cd}
The universal $PU(n+1)$-bundle with fiber $\CP^n$ has trivial Chern differences and the induced map
$$BPU(n+1) \to B\aut(\tau_{\CP^n})^c,$$
is a rational homotopy equivalence.
\end{theorem}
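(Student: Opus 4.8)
The plan is to analyze both spaces rationally using the tools of the paper, showing that the map induces an isomorphism on rational cohomology, and then invoke simple-connectivity (or nilpotence) of the relevant spaces to upgrade this to a rational homotopy equivalence. First I would verify the easy half: that the universal $PU(n+1)$-bundle with fiber $\CP^n$ has trivial Chern differences. For a $PU(n+1)$-bundle, the fiberwise tangent bundle $\zeta = T_\pi E$ is a genuine complex vector bundle on $E$ whose restriction to each fiber is $\tau_{\CP^n}$, and the coupling class $\omega_{fw}(\pi)$ is the fiberwise first Chern class of the relative hyperplane bundle. The classical Euler sequence $0 \to \OO \to \OO(1)^{\oplus(n+1)} \to T_{\CP^n} \to 0$, applied fiberwise, identifies $c(\zeta) = (1+\omega_{fw}(\pi))^{n+1}$, and expanding this and comparing with the defining formula \eqref{eq:fiberwise Chern classes} for $c_i^{fw}(\pi)$ shows (using that the $a_j$ for the $PU(n+1)$-bundle are those coming from the universal relation, cf.\ Proposition \ref{prop:ucd}) that $cd_i(\pi,\zeta)=0$ for all $i$. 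Hence the map $BPU(n+1)\to B\aut(\tau_{\CP^n})^c$ is defined.

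Next I would compute $H^*(B\aut(\tau_{\CP^n})^c;\QQ)$. By construction $B\aut(\tau_{\CP^n})^c$ is the homotopy fiber of the map
$$cd\colon B\aut(\tau_{\CP^n}) \to \prod_i K(\QQ, 2i)$$
recording the Chern differences $cd_i$, so its rational cohomology is computed from that of $B\aut(\tau_{\CP^n})$ by killing the classes $cd_i$. The latter ring is given by Theorem \ref{thm:pu}(2) with $\xi = \tau_{\CP^n}^\RR$ (or its complex refinement), namely an invariant subring of $\QQ[a_2,\ldots,a_{n+1},p_{i|j}]$; but over $\QQ$ a $\tau_{\CP^n}$-fibration with its complex structure is governed by the connected monoid, so I would instead work with $H^*(B\aut_\circ(\tau_{\CP^n});\QQ) = \QQ[a_2,\ldots,a_{n+1}, c_{i|j}]$ in the complex version, where $c_{i|j}$ are the components of the fiberwise Chern classes $c_i(\zeta)$. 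The Chern difference $cd_i(\pi,\zeta) = c_i^{fw}(\pi)-c_i(\zeta)$ expands in the basis $1,\omega_{fw},\ldots,\omega_{fw}^n$ of $H^*(E)$ over $H^*(B)$, and its components are, up to triangular change of variables, the differences between a polynomial in $a_2,\ldots,a_{n+1}$ and the generators $c_{i|j}$. One then checks that $\{cd_i \text{ components}\}$ together with $a_2,\ldots,a_{n+1}$ form a new polynomial generating set, so that quotienting by the ideal generated by the $cd_i$ components leaves exactly $\QQ[a_2,\ldots,a_{n+1}]$. This must be matched with $H^*(BPU(n+1);\QQ)$, which is the polynomial ring $\QQ[b_2,\ldots,b_{n+1}]$ on generators in degrees $4,\ldots,2n+2$ (the rational characteristic classes of $PU(n+1)$), and under the map the generators $a_i$ pull back to these generators — this is again Proposition \ref{prop:ucd} together with the relation between the $a_i$ and Chern classes in Proposition \ref{prop:a vs chern}. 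Counting degrees confirms the identification $H^*(B\aut(\tau_{\CP^n})^c;\QQ)\cong \QQ[a_2,\ldots,a_{n+1}]$ with $|a_i|=2i$.

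The hard part will be two points of rigor. First, justifying that taking the homotopy fiber of $cd$ has the stated effect on rational cohomology: since the target $\prod_i K(\QQ,2i)$ is rationally formal and the classes $cd_i$ are (after the triangular change of variables) part of a regular sequence of polynomial generators, the Eilenberg–Moore spectral sequence of the fibration collapses and yields the quotient ring cleanly — but one should check that the $cd_i$ are genuinely algebraically independent from $a_2,\ldots,a_{n+1}$ and from each other, i.e.\ that the Jacobian of the change of variables is invertible, which is where the combinatorial identity in \eqref{eq:fiberwise Chern classes} does the work. Second, upgrading a rational cohomology isomorphism to a rational homotopy equivalence: both $BPU(n+1)$ and $B\aut(\tau_{\CP^n})^c$ need to be simply connected (or at least nilpotent) spaces with finite-type rational cohomology. $BPU(n+1)$ is simply connected; for $B\aut(\tau_{\CP^n})^c$ one uses that $\CP^n$ is simply connected and the fiberwise construction to see $B\aut_\circ(\tau_{\CP^n})$ is simply connected, and the homotopy fiber of a map to a product of simply connected Eilenberg–MacLane spaces from a simply connected space is again simply connected (here one also needs that the map $cd$ is, on $\pi_2$, compatible with killing exactly the relevant classes — but in degree $2$ there is nothing to kill since $|a_i|\geq 4$ and $|cd_i|\geq 4$, so no subtlety arises on $\pi_2$). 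Once both spaces are known to be simply connected of finite rational type, a map inducing an isomorphism on $H^*(-;\QQ)$ is a rational homotopy equivalence by the standard rational Whitehead theorem, completing the proof. I would expect the verification of the algebraic independence (equivalently, that the transition matrix built from the binomial coefficients $\binom{n+1-j}{i-j}$ is unitriangular hence invertible) to be the one genuinely non-formal computation, and I would isolate it as a lemma.
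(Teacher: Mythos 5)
There is a genuine gap in your ``easy half''. A $PU(n+1)$-bundle with fiber $\CP^n$ does \emph{not} carry a relative hyperplane (or canonical) line bundle on its total space --- the existence of such an $L$ is exactly what distinguishes projectivized vector bundles ($U(n+1)$-structure) from general $PU(n+1)$-bundles --- so the fiberwise Euler sequence cannot be invoked directly over $BPU(n+1)$. Worse, the identity you extract from it, $c(\zeta)=(1+\omega_{fw}(\pi))^{n+1}$, is false even for projectivizations: for $\PP(E)$ one has $c(\epsilon^1\oplus\zeta)=c(\overline{L}\tensor\pi^*E)$, and after substituting $c_1(\overline L)=\omega_{fw}(\pi)-e_{|0}(\pi,L)$ this gives $c_i(\zeta)=\sum_k\binom{n+1-i+k}{k}a_{i-k}(\pi)\,\omega_{fw}(\pi)^k$ (Proposition \ref{prop:ucd}), not $\binom{n+1}{i}\omega_{fw}(\pi)^i$. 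Your formula together with triviality of the Chern differences would force $a_j(\pi)=0$ for $j\geq 2$ on the universal $PU(n+1)$-bundle, contradicting the fact (which your own second half needs) that these classes map to polynomial generators of $H^*(BPU(n+1);\QQ)$. Finally, even with the correct formula, Proposition \ref{prop:ucd} only covers $U(n+1)$-bundles; to conclude for the universal $PU(n+1)$-bundle you need a descent step. The paper does this by factoring $q\colon BU(n+1)\to B\aut(\tau_{\CP^n})$ through $BPU(n+1)$ and using that $f^*\colon H^*(BPU(n+1);\QQ)\to H^*(BU(n+1);\QQ)$ is injective, so that $\ker(r^*)=\ker(q^*)=I$, where $I$ is the ideal of components $c_{i|j}-\binom{n+1-i+j}{j}a_{i-j}$; this simultaneously proves vanishing of the Chern differences for the universal $PU(n+1)$-bundle and injectivity of $H^*(B\aut(\tau_{\CP^n})^c;\QQ)\to H^*(BPU(n+1);\QQ)$. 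Your proposal skips this step entirely.

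The remainder of your outline is close to the paper's argument: identifying $H^*(B\aut(\tau_{\CP^n})^c;\QQ)$ with $\QQ[a_2,\ldots,a_{n+1},c_{i|j}]/I\cong\QQ[a_2,\ldots,a_{n+1}]$ via the triangular (linear) change of generators, and concluding by comparing with the polynomial ring $H^*(BPU(n+1);\QQ)$ on generators in degrees $4,6,\ldots,2n+2$. Two smaller points: the paper defines $B\aut(\tau_{\CP^n})^c$ as the homotopy pullback \eqref{eq:bc} over $B\aut(\xi_c)$, not as the homotopy fiber of a map to Eilenberg--MacLane spaces recording the $cd_i$ (whose components live in varying degrees $2i-2j$ on the base, not $2i$), so if you use your description you must justify that it computes the same rational cohomology; and your claim that ``the $a_i$ pull back to the generators'' of $H^*(BPU(n+1);\QQ)$ is precisely what the explicit formulas for $q^*(a_i)$, $q^*(c_{i|j})$ together with the injectivity of $f^*$ and the degree count establish --- it should be proved, not asserted.
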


\begin{corollary} \label{cor:pun}
A $\tau_{\CP^n}$-fibration is rationally equivalent to a $PU(n+1)$-bundle if and only if its Chern differences are trivial.
\end{corollary}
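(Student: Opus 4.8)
The plan is to deduce the corollary from Theorem~\ref{thm:cd} by the standard translation between ``rational equivalence to a $G$-bundle'' and liftings of classifying maps, exactly as in the proof of Corollary~\ref{cor:sphere bundle}, the only extra bookkeeping being that the Chern differences live on the total space rather than on the base. For necessity, suppose $f\colon B'\to B$ is a rational equivalence such that $f^*(\pi,\zeta)$ is the underlying $\tau_{\CP^n}$-fibration of a $PU(n+1)$-bundle. I would first note that pulling a $\CP^n$-fibration back along a rational equivalence yields a rational equivalence of total spaces (compare the Serre spectral sequences; the fibrewise cohomology is a trivial local system by orientability), so the induced map $\tilde f\colon E(f^*\pi)\to E$ is injective on rational cohomology. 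Since the Chern differences $cd_i$ are natural characteristic classes of $\tau_{\CP^n}$-fibrations, $\tilde f^*(cd_i(\pi,\zeta)) = cd_i(f^*(\pi,\zeta))$, and the right-hand side vanishes by the first assertion of Theorem~\ref{thm:cd}. Injectivity then forces $cd_i(\pi,\zeta)=0$ for all $i$.

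For sufficiency, the substantive direction, let $\phi\colon B\to B\aut(\tau_{\CP^n})$ classify $(\pi,\zeta)$ and let $\mathcal E\to B\aut(\tau_{\CP^n})$ be the universal fibration; recall that $H^*(\mathcal E;\QQ)$ is free over $H^*(B\aut(\tau_{\CP^n});\QQ)$ on $1,\omega_{fw},\dots,\omega_{fw}^n$. I would expand the universal Chern differences in this basis to obtain classes $cd_{i|j}\in H^*(B\aut(\tau_{\CP^n});\QQ)$ and recall that, by its construction, $B\aut(\tau_{\CP^n})^c$ is the homotopy fibre of the map $B\aut(\tau_{\CP^n})\to\prod K(\QQ,\ast)$ recording the $cd_{i|j}$, so that the homotopy fibre of $B\aut(\tau_{\CP^n})^c\to B\aut(\tau_{\CP^n})$ is a product of rational Eilenberg--MacLane spaces. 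By naturality of $\omega_{fw}$ and of the $a$- and $p$-classes, the classes $\phi^*(cd_{i|j})$ are precisely the coefficients of $cd_i(\pi,\zeta)$ with respect to the free basis $1,\omega_{fw}(\pi),\dots,\omega_{fw}(\pi)^n$ of $H^*(E;\QQ)$ over $H^*(B;\QQ)$; by hypothesis they all vanish. Obstruction theory for sections of the pulled-back fibration over $B$ --- whose fibre is a product of rational Eilenberg--MacLane spaces and whose successive obstruction classes are the $\phi^*(cd_{i|j})$ --- then produces a lift $\bar\phi\colon B\to B\aut(\tau_{\CP^n})^c$ of $\phi$.

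Finally I would invoke the second assertion of Theorem~\ref{thm:cd}: the map $g\colon BPU(n+1)\to B\aut(\tau_{\CP^n})^c$ is a rational equivalence, so its homotopy fibre is rationally trivial. Forming the homotopy pullback $B'$ of $\bar\phi$ and $g$, the projection $f\colon B'\to B$ has the same homotopy fibre as $g$ and is therefore a rational equivalence, while the other leg $h\colon B'\to BPU(n+1)$ classifies a $PU(n+1)$-bundle whose underlying $\tau_{\CP^n}$-fibration is $h^*g^*(\text{universal}) \simeq f^*\bar\phi^*(\text{universal}) = f^*(\pi,\zeta)$. Hence $(\pi,\zeta)$ is rationally equivalent to a $PU(n+1)$-bundle.

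The main obstacle is the sufficiency direction, and within it two points deserve care: (i) matching the classifying-space description of $B\aut(\tau_{\CP^n})^c$ with a genuine homotopy fibre over a product of rational Eilenberg--MacLane spaces, so that the obstruction-theoretic argument and the identification of the obstruction classes with the $cd_{i|j}$ are legitimate; and (ii) controlling fundamental groups in the final homotopy pullback so that ``rationally trivial homotopy fibre'' really does force $f$ to be a rational equivalence --- here one uses that $BPU(n+1)$ is simply connected together with the usual nilpotence hypotheses on the base implicit in the notion of rational equivalence employed throughout.
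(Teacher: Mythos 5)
Your proof is correct and follows the route the paper intends: the corollary is stated as an immediate consequence of Theorem~\ref{thm:cd} together with the definition of $B\aut(\tau_{\CP^n})^c$ via the homotopy pullback \eqref{eq:bc}, and your argument (naturality of the Chern differences for necessity; lifting the classifying map to $B\aut(\tau_{\CP^n})^c$ and pulling back along the rational equivalence $BPU(n+1)\to B\aut(\tau_{\CP^n})^c$ for sufficiency) is exactly the standard deduction the paper leaves implicit. The two caveats you flag are genuine but harmless here: the fibre of $B\aut(\tau_{\CP^n})^c\to B\aut(\tau_{\CP^n})$ is a product of rational Eilenberg--Mac Lane spaces because the ``structure group'' of $\xi_c$ is $\Omega K$ for $K$ a product of such spaces (so the lifting problem is principal and governed precisely by the classes $cd_{i|j}$), and every $\tau_{\CP^n}$-fibration is automatically orientable since $\Gamma(\tau_{\CP^n})=0$, which takes care of the local-coefficient and fundamental-group issues.
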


\begin{remark}
Since $c_1^{fw}(\pi)  = (n+1)\omega_{fw}(\pi)$, it follows immediately from Theorem \ref{thm:extended tautological} that all characteristic classes of $\tau_{\CP^n}$-fibrations with trivial first Chern difference are tautological.
\end{remark}

\begin{remark}
The map $B\aut(\tau_{\CP^n})^c \to B\aut_\circ(\CP^n)$ is a rational equivalence (see Remark \ref{rmk:bpun}). Using this and the preceding remarks, we see that the ring of characteristic classes of $\tau_{\CP^n}$-fibrations with trivialized Chern differences (and the ring of characteristic classes of $PU(n+1)$-bundles with fiber $\CP^n$) may be identified with the polynomial ring in the classes $\kappa_{c_1^{n+2}},\ldots,\kappa_{c_1^{2n+1}}$.
\end{remark}

There are similar results for $\tau_{\CP^n}^\RR$-fibrations $(\pi,\zeta)$, where the total bundle $\zeta$ is an oriented vector bundle instead of a complex vector bundle. We define fiberwise Pontryagin classes and a fiberwise Euler class by
\begin{align*}
p_i^{fw}(\pi) & = \sum_{j=0}^{2i} (-1)^{j-i} c_j^{fw}(\pi) c_{2i-j}^{fw}(\pi), \\
e^{fw}(\pi) & = c_n^{fw}(\pi).
\end{align*}
We remark that $e^{fw}(\pi)$ as defined here agrees with the fiberwise Euler class in the sense of \cite{HLLRW}, as follows from \cite[Theorem 5.6]{Prigge}. Let $B\aut(\tau_{\CP^n}^\RR)^{p,e}$ denote the classifying space for $\tau_{\CP^n}^\RR$-fibrations $(\pi,\zeta)$ with trivializations of the Pontryagin differences and the Euler difference,
$$pd_i(\pi,\zeta) = p_i^{fw}(\pi) - p_i(\zeta),\quad ed(\pi,\zeta) = e^{fw}(\pi) - e(\zeta),$$
and let $\Isom^+(\CP^n)$ denote the group of orientation preserving isometries of $\CP^n$.

\begin{theorem} \label{thm:orient}
The universal $\Isom^+(\CP^n)$-bundle with fiber $\CP^n$ has trivial Pontryagin and Euler differences, and the induced map
\begin{equation} \label{eq:isom}
B\Isom^+(\CP^n) \to B\aut(\tau_{\CP^n}^\RR)^{p,e}
\end{equation}
is a rational homotopy equivalence.
\end{theorem}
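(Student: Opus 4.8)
The plan is to reduce the statement to a computation in a polynomial ring via Theorem~\ref{thm:pu} and then to a comparison with $H^*(BPU(n+1);\QQ)$. Write $\xi=\tau_{\CP^n}^\RR$; its structure group is $SO(2n)$, with $H^*(BSO(2n);\QQ)=\QQ[p_1,\ldots,p_{n-1},e]$, $|p_i|=4i$, $|e|=2n$, so Theorem~\ref{thm:pu} gives
$$H^*(B\aut_\circ(\xi);\QQ)=\QQ[a_2,\ldots,a_{n+1},\,p_{i|j},\,e_{|j}],$$
the polynomial ring on $a_2,\ldots,a_{n+1}$, on $p_{i|j}$ for $1\le i\le n-1$ and $0\le j\le\min(2i-1,n)$, and on $e_{|j}$ for $0\le j\le n-1$, while $H^*(B\aut(\xi);\QQ)$ is the invariant subring under $\Gamma(\xi)$, which is $\ZZ/2$ when $n$ is even (complex conjugation $c$ being orientation preserving, so $c^*\xi\cong\xi$) and trivial when $n$ is odd.

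For the first assertion I would reduce to $PU(n+1)$-bundles. Restriction along the universal cover $BPU(n+1)\to B\Isom^+(\CP^n)$, which for $n$ even is the $\ZZ/2$-cover determined by $c$ and which for $n$ odd is the identity, is injective on rational cohomology and, likewise, on the rational cohomology of the associated total spaces; hence it suffices to show the universal $PU(n+1)$-bundle with fibre $\CP^n$ has trivial Pontryagin and Euler differences. Pulling that bundle back along the rational equivalence $BSU(n+1)\to BPU(n+1)$ turns it into the projectivization $\PP(\mathcal V)$ of the universal rank-$(n+1)$ bundle $\mathcal V$, whose fibrewise tangent bundle is the complex bundle $T_\pi\PP(\mathcal V)$. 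By Proposition~\ref{prop:ucd} (the origin of~\eqref{eq:fiberwise Chern classes}) we have $c_i^{fw}(\pi)=c_i(T_\pi\PP(\mathcal V))$, and since the associated $\xi$-fibration has total bundle $\zeta=(T_\pi\PP(\mathcal V))^\RR$, the Whitney-type definitions of $p_i^{fw}$ and $e^{fw}$ force $p_i(\zeta)=p_i^{fw}(\pi)$ and $e(\zeta)=e^{fw}(\pi)$; thus $pd_i=0$ and $ed=0$, which produces the lift~\eqref{eq:isom}.

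To compute $H^*(B\aut(\xi)^{p,e};\QQ)$, note that its universal cover is $B\aut_\circ(\xi)^{p,e}$, the homotopy fibre of a map $\Phi$ from $B\aut_\circ(\xi)$ to a product of spaces $K(\QQ,\bullet)$, $\bullet\ge 2$, recording the coefficients of $pd_i$ and $ed$ in the basis $1,\omega_{fw},\ldots,\omega_{fw}^n$ of $H^*(E;\QQ)$ over $H^*(B;\QQ)$. Writing $p_i(\zeta)=\sum_j p_{i|j}\omega_{fw}^j$ and $e(\zeta)=\sum_j e_{|j}\omega_{fw}^j$, and expanding $p_i^{fw}(\pi),e^{fw}(\pi)$ in the same basis (reducing the powers $\omega_{fw}^{>n}$ that appear when $2i>n$ via $\omega_{fw}^{n+1}=-(a_2\omega_{fw}^{n-1}+\cdots+a_{n+1})$), one finds $\Phi^*$ of the fundamental classes to be, up to sign, $p_{i|j}-(\text{polynomial in the }a\text{'s})$ and $e_{|j}-(\text{polynomial in the }a\text{'s})$, each generator $p_{i|j}$ and $e_{|j}$ occurring with unit coefficient in exactly one of them; the only remaining, degree-zero coefficients are $\binom{n+1}{i}-p_{i|2i}$ (when $2i\le n$) and $(n+1)-e_{|n}$, and these vanish since $p_i(\xi)=\binom{n+1}{i}\omega^{2i}$ and $e(\xi)=(n+1)\omega^n$ in $H^*(\CP^n;\QQ)$. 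These classes therefore form a regular sequence in $\QQ[a_2,\ldots,a_{n+1},p_{i|j},e_{|j}]$ with quotient $\QQ[a_2,\ldots,a_{n+1}]$, so the Eilenberg--Moore spectral sequence collapses, giving $H^*(B\aut_\circ(\xi)^{p,e};\QQ)=\QQ[a_2,\ldots,a_{n+1}]$ with $|a_k|=2k$; taking $\Gamma(\xi)$-invariants (the action being $a_k\mapsto(-1)^k a_k$) yields $H^*(B\aut(\xi)^{p,e};\QQ)$. On the other side $H^*(BPU(n+1);\QQ)\cong H^*(BSU(n+1);\QQ)=\QQ[c_2,\ldots,c_{n+1}]$, $|c_i|=2i$, with $c$ acting for $n$ even by $c_i\mapsto(-1)^i c_i$, and unwinding the defining relation for the $a_j$ on the traceless projectivization $\PP(\mathcal V)$ identifies the pullback of $a_i$ with $c_i$; hence~\eqref{eq:isom} induces on $H^*(-;\QQ)$ the isomorphism $\QQ[a_2,\ldots,a_{n+1}]^{\Gamma(\xi)}\xrightarrow{\ \cong\ }\QQ[c_2,\ldots,c_{n+1}]^{\ZZ/2}$, $a_i\mapsto c_i$ (no invariants when $n$ is odd). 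For $n$ odd both spaces are simply connected, which concludes the proof; for $n$ even both have $\pi_1\cong\ZZ/2$, the map~\eqref{eq:isom} is the identity on $\pi_1$ (conjugation to conjugation), and on universal covers it is $BPU(n+1)\to B\aut_\circ(\xi)^{p,e}$, an isomorphism on $H^*(-;\QQ)$ between simply connected spaces and hence a rational equivalence, so~\eqref{eq:isom} is a rational homotopy equivalence in all cases.

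The step I expect to be the main obstacle is the bookkeeping in the third paragraph: confirming that, even when $2i>n$ forces the reduction of high powers of $\omega_{fw}$, every basis coefficient of $pd_i$ and $ed$ is genuinely triangular in the generators $p_{i|j},e_{|j}$ (so that the regular-sequence argument survives and no relation among the $a_k$ is created), together with pinning down the normalizing constants; the identification of $a_i$ with $c_i$ over $B\Isom^+(\CP^n)$, and the matching of the two $\ZZ/2$-actions, is then a routine computation with the projective bundle formula.
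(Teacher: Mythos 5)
Your overall architecture matches the paper's: triviality of the differences is deduced from the $PU(n+1)$/projectivization picture via Proposition~\ref{prop:ucd} and the Whitney-type formulas (the paper phrases this as $\Isom_\circ(\CP^n)\cong PU(n+1)$ plus the trivial Chern differences), the rational cohomology of the trivialized classifying space is identified with $\QQ[a_2,\ldots,a_{n+1}]^{\Gamma}$, the comparison with $BPU(n+1)$ is made as in Theorem~\ref{thm:tcd}, and the $\ZZ/2$ for $n$ even is handled at the end. Two of your choices genuinely diverge. First, your reduction along $BSU(n+1)\to BPU(n+1)$ with the explicit identification $a_i\mapsto c_i$ (from \eqref{eq:a-classes} with $c_1=0$) is a nice shortcut; the paper instead argues injectivity of $\ker$-comparison and then invokes the abstract isomorphism of polynomial rings with generators in degrees $4,6,\ldots,2n+2$. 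Either works. Second, and more substantively: your computation of $H^*(B\aut_\circ(\tau_{\CP^n}^\RR)^{p,e};\QQ)$ rests on re-describing that space as the homotopy fiber of a map $\Phi$ to a product of Eilenberg--Mac\,Lane spaces recording the coefficients $pd_{i|j}$, $ed_{|j}$. That is not the definition: the space is defined as the homotopy pullback \eqref{eq:bpe} over $B\aut(\xi_{p,e})$, and passing from that pullback to your principal-fibration description requires untwisting the fibration $B\aut(\xi_{p,e})\to B\aut_+(\CP^n)$ using its section and the global classes $\omega_{fw}^j$ (equivalently, the coupling class trivializes the relevant fiberwise mapping space). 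This is true and provable, but you assert it; as written it is the one real gap in the argument. Note that the paper bypasses the issue entirely: by Theorem~\ref{thm:universal xi-fibration} the bottom map of \eqref{eq:bpe} is identified with a map of Borel constructions induced by $BSO(2n)\to \prod_i K(\QQ,4i)\times K(\QQ,2n)$, which is a rational equivalence, so the top map $B\aut(\tau_{\CP^n}^\RR)^{p,e}\to B\aut_+(\CP^n)$ is one, and $\QQ[a_2,\ldots,a_{n+1}]^{\Gamma}$ drops out with no regular-sequence or Eilenberg--Moore argument needed (your triangularity bookkeeping, which you flag as the expected obstacle, is then unnecessary). Your subsequent regular-sequence computation is correct, including the truncation ranges for $p_{i|j}$, $e_{|j}$ and the observation that the degree-zero coefficients impose no condition.

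A smaller imprecision: for $n$ even you claim $\pi_1$ of both sides is exactly $\ZZ/2$ and that $B\aut_\circ(\tau_{\CP^n}^\RR)^{p,e}$ is the universal cover. In fact $\pi_0\aut_\circ(\tau_{\CP^n}^\RR)$ (gauge-type components) need not vanish, so the "universal cover" language is not literally justified. What you actually need is only that $B\aut_\circ(\tau_{\CP^n}^\RR)^{p,e}\to B\aut(\tau_{\CP^n}^\RR)^{p,e}$ sits in a fibration over $B\Gamma(\tau_{\CP^n}^\RR)=B\ZZ/2$, so that rational cohomology of the base of the cover is the $\ZZ/2$-invariants; this is how the paper phrases it ($\Isom^+\cong\Isom_\circ\rtimes\ZZ/2$ and "similarly" for the classifying space), and with that substitution your comparison of the two $\ZZ/2$-actions and the conclusion that \eqref{eq:isom} is a rational equivalence go through.
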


\begin{corollary}
A $\tau_{\CP^n}^{\RR}$-fibration is rationally equivalent to an $\Isom^+(\CP^n)$-bundle if and only if its Pontryagin and Euler differences are trivial.
\end{corollary}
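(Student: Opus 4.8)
The plan is to deduce the theorem from Theorem~\ref{thm:cd} by passing to identity components and then dealing with the extra complex-conjugation symmetry that is present when $n$ is even. I would start from the observation that $\Isom^+(\CP^n)$ has identity component $PU(n+1)$, and that its group of components is $\ZZ/2$, generated by complex conjugation $c$, when $n$ is even, while $\Isom^+(\CP^n)=PU(n+1)$ is connected when $n$ is odd. This is the same parity that governs $\Gamma(\tau_{\CP^n}^\RR)$ in Theorem~\ref{thm:pu}: the differential of $c$ acts anti-linearly on each tangent space $T_p\CP^n\cong\mathbb{C}^n$, hence with real determinant $(-1)^n$, so $c^*(\tau_{\CP^n}^\RR)\cong\tau_{\CP^n}^\RR$ as oriented bundles precisely when $n$ is even. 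Working over $\QQ$ one has $H^*(B\Isom^+(\CP^n);\QQ)=H^*(BPU(n+1);\QQ)^{\ZZ/2}$, the action being conjugation by $c$, and $H^*(B\aut(\tau_{\CP^n}^\RR)^{p,e};\QQ)=H^*(B\aut_\circ(\tau_{\CP^n}^\RR)^{p,e};\QQ)^{\Gamma(\tau_{\CP^n}^\RR)}$. So it suffices to show that the map of identity components $BPU(n+1)\to B\aut_\circ(\tau_{\CP^n}^\RR)^{p,e}$ is a rational homotopy equivalence and that it is equivariant for these two $\ZZ/2$-actions; the equivariance will be clear from the construction of the map.

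To build the map and see that the differences vanish: a $PU(n+1)$-bundle with fibre $\CP^n$ carries a genuine fibrewise complex tangent bundle $T_\pi^{\mathbb{C}}E$, with $(T_\pi^{\mathbb{C}}E)^\RR=T_\pi^\RR E$, and by Theorem~\ref{thm:cd} its Chern differences $cd_i(\pi)=c_i^{fw}(\pi)-c_i(T_\pi^{\mathbb{C}}E)$ are canonically trivial. The formulas defining $p_i^{fw}$ and $e^{fw}$ from the $c_j^{fw}$ are exactly the universal identities expressing $p_i(V^\RR)$ and $e(V^\RR)$ in terms of the Chern classes of a complex bundle $V$, so $p_i^{fw}(\pi)-p_i(T_\pi^\RR E)$ and $e^{fw}(\pi)-e(T_\pi^\RR E)$ lie in the ideal generated by the $cd_j(\pi)$ — indeed $e^{fw}(\pi)-e(T_\pi^\RR E)=cd_n(\pi)$ — and a trivialisation of the Chern differences therefore induces one of the Pontryagin and Euler differences. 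This yields a map that factors as
\begin{equation*}
BPU(n+1)\ \xrightarrow{\ \sim_\QQ\ }\ B\aut(\tau_{\CP^n})^c\ \longrightarrow\ B\aut_\circ(\tau_{\CP^n}^\RR)^{p,e},
\end{equation*}
the first arrow being the rational equivalence of Theorem~\ref{thm:cd} and the second the map ``forget the complex structure of the total bundle and push the Chern trivialisation forward''. This construction is $\ZZ/2$-equivariant (for conjugation by $c$ on $BPU(n+1)$ and the $\Gamma$-action on $B\aut_\circ(\tau_{\CP^n}^\RR)^{p,e}$), so it descends to the map \eqref{eq:isom} on $B\Isom^+(\CP^n)$, which for $n$ even is the homotopy quotient $BPU(n+1)\dquot(\ZZ/2)$; this also records the first assertion of the theorem, that the universal $\Isom^+(\CP^n)$-bundle has trivial Pontryagin and Euler differences.

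It remains to compute $H^*(B\aut_\circ(\tau_{\CP^n}^\RR)^{p,e};\QQ)$ and to check that the comparison map is an isomorphism. By Theorem~\ref{thm:pu}(1) with $G=SO(2n)$ we have $H^*(B\aut_\circ(\tau_{\CP^n}^\RR);\QQ)=\QQ[a_2,\dots,a_{n+1},p_{i|j},e_{|j}]$, and — just as for $B\aut(\tau_{S^m})_L$ — the space $B\aut_\circ(\tau_{\CP^n}^\RR)^{p,e}$ is the homotopy fibre of the map to a product of rational Eilenberg--MacLane spaces recording the $\omega_{fw}$-components $pd_{i|j}$ and $ed_{|j}$ of the Pontryagin and Euler differences. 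Using \eqref{eq:fiberwise Chern classes} one finds $ed_{|j}=(j+1)a_{n-j}-e_{|j}$, and similarly $pd_{i|j}$ is an explicit polynomial in the $a$'s minus $p_{i|j}$ in the relevant range of $j$, while the finitely many constant-degree components of the differences vanish identically — this I would check on the trivial $\CP^n$-fibration, where $a_\bullet=0$ and $c_k^{fw}=\binom{n+1}{k}\omega^k=c_k(T^{\mathbb{C}}\CP^n)$. Thus the classes $pd_{i|j},ed_{|j}$ form a regular sequence, of ``variable type'' (each member eliminates one generator $p_{i|j}$ or $e_{|j}$), and the homotopy fibre has rational cohomology the polynomial ring $\QQ[a_2,\dots,a_{n+1}]$, with residual $\Gamma$-action $a_i\mapsto(-1)^ia_i$ (because $c^*\omega_{fw}=-\omega_{fw}$). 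On the other side, $PU(n+1)\simeq_\QQ SU(n+1)$ gives $H^*(BPU(n+1);\QQ)=\QQ[c_2,\dots,c_{n+1}]$, on which conjugation by $c$ acts via the conjugate bundle, $c_i\mapsto(-1)^ic_i$. Restricted to $BPU(n+1)$ the universal $\CP^n$-bundle is, rationally, the projectivisation $\PP(V)$ of the universal $SU(n+1)$-bundle $V$; since $c_1(V)=0$, formula \eqref{eq:fiberwise Chern classes} and Proposition~\ref{prop:a vs chern} give $a_i(\PP(V))=c_i(V)$. Hence the comparison map sends $a_i$ to $c_i$, is a rational cohomology isomorphism, and is $\ZZ/2$-equivariant; being a map of finite-type spaces with nilpotent fundamental group it is a rational homotopy equivalence, and passing to $\ZZ/2$-invariants as in the first paragraph completes the proof.

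The main obstacle is the identification of $H^*(B\aut_\circ(\tau_{\CP^n}^\RR)^{p,e};\QQ)$: one must verify that trivialising the Pontryagin and Euler differences removes \emph{exactly} the generators $p_{i|j}$ and $e_{|j}$ — in particular that the constant-degree components vanish identically and that what remains is a regular sequence — and track the identification $a_i\leftrightarrow c_i(V)$ through Proposition~\ref{prop:a vs chern} in the case $c_1(V)=0$ and through the two $\ZZ/2$-actions. By comparison, the homotopy-theoretic part — vanishing of the differences and existence of the classifying maps — is routine once Theorem~\ref{thm:cd} and the elementary identities between real and complex characteristic classes are in hand.
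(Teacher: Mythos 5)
Your proposal is correct and essentially follows the paper's own route: vanishing of the Pontryagin and Euler differences for $\Isom_\circ(\CP^n)\cong PU(n+1)$-bundles is deduced from the Chern-difference results (Proposition \ref{prop:ucd}, Theorem \ref{thm:tcd}), the map $BPU(n+1)\to B\aut_\circ(\tau_{\CP^n}^\RR)^{p,e}$ is shown to be a rational equivalence by the same kind of cohomology computation as in the proof of Theorem \ref{thm:tcd} (your regular-sequence elimination of the generators $p_{i|j},e_{|j}$ and the identification $a_i\mapsto c_i(V)$ via $SU(n+1)$ are mild variants of the paper's kernel computation; the paper also records the shortcut that $BSO(2n)\to K_{p,e}$ is a rational equivalence, so that $B\aut(\tau_{\CP^n}^\RR)^{p,e}\to B\aut_+(\CP^n)$ is one), and the case $n$ even is handled via $\Isom^+(\CP^n)\cong\Isom_\circ(\CP^n)\rtimes\ZZ/2\ZZ$ and passage to invariants, exactly as in the paper's proof of Theorem \ref{thm:orient}. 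The remaining step from this classifying-space statement to the fibration-level ``if and only if'' of the corollary is left formal in your write-up, just as it is in the paper.
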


\begin{theorem} \label{thm:taut orient}
Consider the ring $H^*\big(B\aut(\tau_{\CP^n}^\RR)^{p,e};\QQ\big)$ of characteristic classes of $\tau_{\CP^n}^\RR$-fibrations with trivialized Pontryagin and Euler differences.
\begin{enumerate}
\item For odd $n=2k+1$, the ring may be identified with the polynomial ring in the $n$ generators
$$\kappa_{ep_1},\ldots,\kappa_{ep_1^{k+1}},\kappa_{p_1^{k+2}},\ldots,\kappa_{p_1^{2k+1}}.$$

\item For even $n=2k$, the ring is a complete intersection of Krull dimension $n$ and embedding dimension $n + \binom{k}{2}$.
A minimal set of generators is given by
$$\kappa_{p_1^{k+1}},\ldots,\kappa_{p_1^{2k}},\quad \kappa_{p_1^{3k+1}},$$
and
$$\kappa_{p_1^{k+s-1}\beta_s},\ldots, \kappa_{p_1^{2k}\beta_s},$$
for $s=2,\ldots, k$,
where
$$\beta_s = (n+1)^s p_s - \binom{n+1}{s}p_1^s.$$
\end{enumerate}

In particular, all characteristic classes of $\tau_{\CP^n}^\RR$-fibrations with trivialized Pontryagin and Euler differences are tautological.
\end{theorem}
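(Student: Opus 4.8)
The plan is to deduce the result from Theorem~\ref{thm:pu} by passing to the quotient that trivializes the differences, and then to pin down the tautological generators by a leading-term computation. \textbf{Step 1.} The bundle $\tau_{\CP^n}^\RR$ has structure group $SO(2n)$, with $H^*(BSO(2n);\QQ)=\QQ[p_1,\dots,p_{n-1},e]$. Complex conjugation $c$ is orientation preserving on $\CP^n$ precisely when $n$ is even, and for odd $n$ the Euler class shows $c^*\tau_{\CP^n}^\RR\not\cong\tau_{\CP^n}^\RR$; hence $\Gamma(\tau_{\CP^n}^\RR)\cong\ZZ/2$ for $n$ even and is trivial for $n$ odd, so Theorem~\ref{thm:pu} identifies $H^*(B\aut(\tau_{\CP^n}^\RR);\QQ)$ with the $\Gamma$-invariants of $\QQ[a_2,\dots,a_{n+1},p_{i|j},e_{|j}]$ (here $e$ is one of the generators $p_i$ in the notation of Theorem~\ref{thm:pu}).

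\textbf{Step 2.} Substituting the formula for $c_i^{fw}$ into $p_i^{fw}$ and $e^{fw}=c_n^{fw}$ and reducing modulo the Grothendieck relation $\omega_{fw}^{n+1}=-(a_2\omega_{fw}^{n-1}+\dots+a_{n+1})$ expresses all of these in the $H^*(B)$-basis $1,\omega_{fw},\dots,\omega_{fw}^{n}$ of $H^*(E)$ as explicit polynomials in the $a_\ell$. Since $c_i^{fw}$ restricts on the fibre to $c_i(\tau_{\CP^n})=\binom{n+1}{i}\omega^i$, every difference restricts to zero on the fibre and its top $\omega_{fw}$-component vanishes, so each nonzero $\omega_{fw}$-component of $pd_i$ and $ed$ has the shape (polynomial in the $a_\ell$) minus (a single one of the generators $p_{i|j}$, $e_{|j}$). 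These components therefore form a regular sequence, and trivializing them, i.e.\ passing to $B\aut(\tau_{\CP^n}^\RR)^{p,e}$, eliminates every $p_{i|j}$ and $e_{|j}$; thus $H^*(B\aut(\tau_{\CP^n}^\RR)^{p,e};\QQ)=\QQ[a_2,\dots,a_{n+1}]^{\Gamma}$, with $p_1(\zeta)=(n+1)\omega_{fw}^2-2a_2$, $p_s(\zeta)=p_s^{fw}$ and $e(\zeta)=c_n^{fw}$ valid over this space. (This is consistent with Theorem~\ref{thm:orient}, which gives the alternative description $H^*(B\Isom^+(\CP^n);\QQ)$.)

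\textbf{Step 3.} Now one uses that the Gysin map satisfies $\int_{\CP^n}\omega_{fw}^{j}=q_{j-n}$, where $\sum_m q_mt^m=(1+a_2t^2+\dots+a_{n+1}t^{n+1})^{-1}$, so that each $\kappa$-class becomes a concrete polynomial in the $a_\ell$ amenable to analysis modulo decomposables. For $n=2k+1$ the ring is the polynomial ring $\QQ[a_2,\dots,a_{2k+2}]$; the constant term $(n+1)\omega_{fw}^{n}$ of $c_n^{fw}$ forces $\kappa_{ep_1},\dots,\kappa_{ep_1^{k+1}}$ to have leading terms proportional to $a_2,a_4,\dots,a_{2k+2}$, and $q_{2j-n}\equiv-a_{2j-n}$ modulo decomposables forces $\kappa_{p_1^{k+2}},\dots,\kappa_{p_1^{2k+1}}$ to have leading terms proportional to $a_3,a_5,\dots,a_{2k+1}$; these $n$ classes are thus a regular system of parameters, hence a free polynomial basis, which gives part~(1) and the tautological statement. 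For $n=2k$, $\Gamma=\ZZ/2$ acts by $a_\ell\mapsto(-1)^\ell a_\ell$, and one identifies $\QQ[a_2,\dots,a_{2k+1}]^{\ZZ/2}$ with the asserted complete intersection of Krull dimension $n$ and embedding dimension $n+\binom k2$. The classes $\kappa_{p_1^{k+1}},\dots,\kappa_{p_1^{2k}}$ account for the even-index variables $a_2,\dots,a_{2k}$, and $\kappa_{p_1^{3k+1}}\equiv(n+1)^{3k+1}q_{4k+2}\equiv(n+1)^{3k+1}a_{2k+1}^2$ modulo decomposables accounts for one further generator; the remaining generators, which are quadratic in the anti-invariant variables $a_3,\dots,a_{2k+1}$, are realized by $\kappa_{p_1^\ell\beta_s}$, where $\beta_s=(n+1)^sp_s-\binom{n+1}{s}p_1^s$ is built so that $\beta_s(\tau_{\CP^n}^\RR)=0$ and hence $\beta_s(\zeta)=(n+1)^sp_s^{fw}-\binom{n+1}{s}(p_1^{fw})^s$ has no constant term, making $\kappa_{p_1^\ell\beta_s}\equiv(n+1)^\ell\int_{\CP^n}\omega_{fw}^{2\ell}\beta_s(\zeta)$ modulo decomposables a controlled combination of the products $a_ia_j$. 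Checking that all these leading terms span the indecomposables, one concludes that the listed classes form a minimal generating set and that every characteristic class is tautological.

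The principal obstacle is the combinatorics in Step~3: tracking how iterated use of the Grothendieck relation distributes the $a_\ell$ through the power-sum-type coefficients $q_m$, and, in the even case, verifying that $\{\kappa_{p_1^\ell\beta_s}\}_{s,\ell}$ realizes precisely the off-diagonal quadratic generators of the invariant ring so that the resulting presentation is a complete intersection of the claimed dimensions.
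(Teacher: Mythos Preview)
Your overall strategy matches the paper's: reduce to the $\Gamma$-invariants of $\QQ[a_2,\ldots,a_{n+1}]$, then show the listed $\kappa$-classes generate by analyzing their leading terms. Two remarks on how the paper proceeds differently, and one point where your sketch is genuinely incomplete.

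\textbf{On Step 2.} The paper avoids your regular-sequence computation entirely. It sets up $B\aut(\tau_{\CP^n}^\RR)^{p,e}$ as a homotopy pullback over $B\aut(\xi_{p,e})$, observes that $BSO(2n)\to K_{p,e}$ is a rational equivalence, and concludes that the top map $B\aut(\tau_{\CP^n}^\RR)^{p,e}\to B\aut_+(\CP^n)$ is a rational equivalence. This gives $H^*=\QQ[a_2,\ldots,a_{n+1}]^\Gamma$ immediately, together with the identities $p_i(\zeta)=p_i^{fw}(\pi)$, $e(\zeta)=e^{fw}(\pi)$ for free, without tracking which $p_{i|j}$ gets eliminated by which difference.

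\textbf{On Step 3, odd $n$.} The paper isolates the leading-term information you want in a separate lemma (Lemma~\ref{lemma:congruences}) that records $\kappa_{ep_1^\ell}\equiv -2\ell(n+1)^\ell a_{2\ell}$ and $\kappa_{p_1^\ell}\equiv -(n+1)^\ell a_{2\ell-n}\pmod{\aideal^2}$. Your argument via the generating function for $\pi_!(\omega_{fw}^j)$ is equivalent and perfectly adequate here.

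\textbf{On Step 3, even $n$.} Here your ``checking that all these leading terms span the indecomposables'' is exactly where the work lies, and it is not a routine bookkeeping exercise. The paper proves a dedicated lemma whose hypothesis isolates the two properties of $\beta_s$ that make the argument go through: (i) $\beta_{s|2s}=0$ for $s\le k$ (your ``no constant term''), and (ii) $\beta_{s|1}\notin\aideal^2$, i.e.\ the $\omega_{fw}$-coefficient of $\beta_s(\zeta)$ has a nonzero linear term $\sim a_{2s-1}$. Property (ii) is what you are missing: it is the mechanism that lets $\kappa_{p_1^{k+\ell-s}\beta_s}$ recover the product $a_{2s-1}a_{2\ell-(2s-1)}$. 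The proof of the lemma is a double induction, first on the degree $4\ell$, and within each degree on the odd index $i$, using the identity $\omega_{fw}^2=\mu\,p_1^{fw}+\nu\,\kappa_{p_1^{k+1}}$ to rewrite $\pi_!(\beta_s\omega_{fw}^{2k+2\ell-2s})$ as a $\kappa$-class plus lower-degree terms, then peeling off the $b_j\omega_{fw}^{2k+2\ell-j}$ contributions one by one. Your sentence ``a controlled combination of the products $a_ia_j$'' does not capture this, and a naive attempt to read off a single leading monomial from each $\kappa_{p_1^\ell\beta_s}$ will not obviously produce linearly independent classes; the induction is what makes it work. You should either carry out this induction or cite conditions (i)--(ii) explicitly and indicate how they drive it.
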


\begin{remark}
Theorem \ref{thm:taut orient} could also be read as a statement about the ring of characteristic classes of $\Isom^+(\CP^n)$-bundles with fiber $\CP^n$, in view of Theorem \ref{thm:orient}. The displayed generators are by no means canonical, there are other options.
\end{remark}

The above results have interesting consequences for the tautological ring of $\CP^n$. For an orientable $\tau_{\CP^n}^\RR$-fibration $(\pi,\zeta)$ over $B$, let $\pdideal \subseteq H^*(B;\QQ)$ denote the ideal generated by the coefficients $pd_{i|j}$, $ed_{|i}$, of the Pontryagin and Euler differences,
$$pd_i(\pi,\zeta) = \sum_{j=0}^{2i} pd_{i|j}\cdot \omega_{fw}(\pi)^j,\quad ed(\pi,\zeta) = \sum_{j=0}^{n} ed_{|j}\cdot \omega_{fw}(\pi)^j.$$
If $(\pi,\zeta)$ is not orientable, we define $\pdideal$ by pullback along $H^*(B;\QQ) \to H^*(B_\circ;\QQ)$, where $B_\circ\to B$ is the cover corresponding to the kernel of the action of $\pi_1(B)$ on the cohomology of the fiber. Let us call elements of $\pdideal \subseteq H^*(B;\QQ)$ `difference classes'.
We let $R^*(B) \subseteq H^*(B;\QQ)$ denote the subring of tautological classes.

\begin{theorem} \label{thm:taut}
Consider a $\tau_{\CP^n}^\RR$-fibration over a space $B$ whose classifying map fits in a homotopy commutative diagram
$$
\xymatrix{& B \ar[d] \\ B\Isom^+(\CP^n) \ar[r] \ar@{-->}[ur] & B\aut(\tau_{\CP^n}^\RR).}
$$
The induced ring homomorphism
\begin{equation} \label{eq:taut surj new}
R^*(B) \to H^*(B\Isom^+(\CP^n);\QQ)
\end{equation}
is surjective and the kernel is the ideal $R^*(B)\cap \pdideal$ of tautological difference classes.
\end{theorem}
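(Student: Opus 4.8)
The plan is to reduce the statement to the universal case $B=B\aut(\tau_{\CP^n}^\RR)$ (classifying map the identity) and then to settle that case using Theorems \ref{thm:orient}, \ref{thm:taut orient}, and \ref{thm:pu}. First I would observe that the homomorphism \eqref{eq:taut surj new} is pullback along the lift $g\colon B\Isom^+(\CP^n)\to B$, restricted to $R^*(B)$. Writing $f\colon B\to B\aut(\tau_{\CP^n}^\RR)$ for the classifying map of the given fibration, the hypothesis says that $f\circ g$ agrees with the natural map $B\Isom^+(\CP^n)\to B\aut(\tau_{\CP^n}^\RR)$; so, by naturality of the $\kappa$-classes and of the coupling class $\omega_{fw}$ (Lemma \ref{lemma:fw generator}), $g$ pulls the given $\tau_{\CP^n}^\RR$-fibration back to the underlying $\tau_{\CP^n}^\RR$-fibration of the universal $\Isom^+(\CP^n)$-bundle. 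Consequently $g^*$ carries $R^*(B)=f^*R^*(B\aut(\tau_{\CP^n}^\RR))$ onto the tautological subring of $B\Isom^+(\CP^n)$, and it annihilates $\pdideal$: by Theorem \ref{thm:orient} the universal $\Isom^+(\CP^n)$-bundle has vanishing Pontryagin and Euler differences, so, by freeness of the total-space cohomology over the base together with naturality, all coefficients of those differences pull back to $0$ along $g$.

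Surjectivity is then immediate: the image of $g^*|_{R^*(B)}$ is the tautological subring of $B\Isom^+(\CP^n)$, which by Theorems \ref{thm:orient} and \ref{thm:taut orient} equals all of $H^*(B\Isom^+(\CP^n);\QQ)$. For the kernel I would invoke the theorem in the universal case $B=B\aut(\tau_{\CP^n}^\RR)$ (proved below): since $\pdideal$ is generated by the $f^*$-images of the universal difference coefficients and $g^*\pdideal=0$, the inclusion $R^*(B)\cap\pdideal\subseteq\ker(g^*|_{R^*(B)})$ is clear, and conversely, if $x=f^*y\in\ker(g^*|_{R^*(B)})$ with $y\in R^*(B\aut(\tau_{\CP^n}^\RR))$, then $y$ is annihilated by the restriction to $R^*(B\aut(\tau_{\CP^n}^\RR))$ of pullback along the natural map $B\Isom^+(\CP^n)\to B\aut(\tau_{\CP^n}^\RR)$ (as $f\circ g$ is that map), hence, by the universal case, $y$ lies in the ideal generated by the universal difference coefficients, so $x=f^*y\in\pdideal$. (If the fibration over $B$ is not orientable, one runs the same argument over the orientation covers, compatibly with the relevant $\ZZ/2$-action, since $\pdideal$ is defined by pullback from the orientation cover.)

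For the universal case, put $A=B\aut(\tau_{\CP^n}^\RR)$, $P=B\aut(\tau_{\CP^n}^\RR)^{p,e}$, let $q\colon P\to A$ be the forgetful map, and let $i\colon B\Isom^+(\CP^n)\to P$ classify the universal $\Isom^+(\CP^n)$-bundle with its canonical trivialization of the differences; by Theorem \ref{thm:orient}, $i$ is a rational equivalence and the universal lift is $q\circ i$, so on rational cohomology we must analyze $q^*$ restricted to $R^*(A)$. Naturality of the $\kappa$-classes gives $q^*R^*(A)=R^*(P)$, which is all of $H^*(P;\QQ)$ by Theorem \ref{thm:taut orient}; this yields surjectivity. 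Moreover $q^*$ kills the ideal $\pdideal$ generated by the universal difference coefficients, since the universal fibration over $P$ has trivialized differences by construction. So the universal case comes down to the reverse inclusion $\ker q^*\subseteq\pdideal$.

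This inclusion is the main obstacle. Since, by construction, $P$ is the homotopy fiber of the map from $A$ to a product of rational Eilenberg--MacLane spaces classifying the universal difference coefficients (finitely many in each degree), a relative Sullivan model of $P$ over a model of $A$ is obtained by adjoining generators $s_\ell$ whose differential is a cocycle representing the $\ell$-th difference coefficient; this exhibits a natural surjection $H^*(A;\QQ)/\pdideal\twoheadrightarrow H^*(P;\QQ)$, and $\ker q^*=\pdideal$ is exactly the assertion that this surjection is an isomorphism. I would verify this by a Poincar\'e-series count: Theorem \ref{thm:pu} presents $H^*(A;\QQ)$ as the $\Gamma$-invariants of an explicit polynomial ring, while Theorems \ref{thm:orient} and \ref{thm:taut orient} present $H^*(P;\QQ)=H^*(B\Isom^+(\CP^n);\QQ)$ explicitly; comparing the series shows that a suitable subset of the difference coefficients is a regular sequence in the Cohen--Macaulay ring $H^*(A;\QQ)$ that generates $\pdideal$ and whose quotient has the same Poincar\'e series as $H^*(P;\QQ)$, forcing the displayed surjection to be an isomorphism. (Alternatively, the isomorphism $H^*(P;\QQ)\cong H^*(A;\QQ)/\pdideal$ is already implicit in the proof of Theorem \ref{thm:orient} and may simply be cited.) Granting it, $\ker q^*=\pdideal$, hence $\ker(q^*|_{R^*(A)})=R^*(A)\cap\pdideal$, which completes the universal case and therefore the theorem.
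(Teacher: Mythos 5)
Your overall skeleton matches the paper's: surjectivity from Theorems \ref{thm:orient} and \ref{thm:taut orient}, and the reduction of the kernel statement to the universal case via the factorization $R^*(\tau_{\CP^n}^\RR)\to R^*(B)\to H^*(B\Isom^+(\CP^n);\QQ)$ is exactly the paper's $\ker(f)=g(\ker(fg))$ argument. The problem is your justification of the universal identification $\ker(q^*)=\pdideal^{univ}$, which is the heart of the matter. First, $B\aut(\tau_{\CP^n}^\RR)^{p,e}$ is \emph{not} ``by construction'' the homotopy fiber of a map from $B\aut(\tau_{\CP^n}^\RR)$ to a product of Eilenberg--Mac\,Lane spaces recording the difference coefficients; it is defined as the homotopy pullback \eqref{eq:bpe} over $B\aut(\xi_{p,e})$, and identifying the two descriptions (even just rationally) is itself a claim that needs an argument. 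Second, even granting a relative Sullivan model obtained by adjoining generators whose differentials represent the difference coefficients, this only produces a natural \emph{map} $H^*(A;\QQ)/\pdideal^{univ}\to H^*(P;\QQ)$, not a surjection: killing cocycles can create new classes (for instance exterior classes if some coefficient were a coboundary, or secondary products), and ruling these out is precisely what has to be proved. Your proposed fix by ``comparing Poincar\'e series'' is not carried out, and it cannot be carried out without knowing the explicit shape of the difference coefficients --- namely that each nontrivial one has the form $p_{i|j}-(\text{polynomial in }a_2,\ldots,a_{n+1})$ or $e_{|j}-(\text{polynomial in }a_2,\ldots,a_{n+1})$, coming from the formulas for $p_i^{fw}$ and $e^{fw}$ --- so that they eliminate the generators $p_{i|j},e_{|j}$ and the quotient is $\QQ[a_2,\ldots,a_{n+1}]^{\Gamma}$. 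That explicit computation is exactly what the paper does: it identifies $H^*\big(B\aut(\tau_{\CP^n}^\RR)^{p,e};\QQ\big)$ with $H^*(B\aut_+(\CP^n);\QQ)$ using that the bottom (hence top) map in \eqref{eq:bpe} is a rational equivalence, and then writes out the restriction homomorphism ``as in the proof of Theorem \ref{thm:tcd}'', from which $\ker=\pdideal^{univ}$ is read off.

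Your parenthetical fallback --- citing the proof of Theorem \ref{thm:orient} (equivalently, of Theorem \ref{thm:tcd}) for the isomorphism $H^*(P;\QQ)\cong H^*(A;\QQ)/\pdideal^{univ}$ --- is legitimate and is in effect what the paper invokes, so with that substitution the proof closes. But as your primary argument stands, the step ``natural surjection $H^*(A;\QQ)/\pdideal^{univ}\twoheadrightarrow H^*(P;\QQ)$'' is unjustified, and the series/regular-sequence patch is circular without the explicit formulas. One further small point: in the universal case you should be explicit that for $n$ even everything is carried out on the orientable covers (Theorem \ref{thm:pu} gives the polynomial ring only for $B\aut_\circ$) and then passed to $\Gamma$-invariants, as the paper does by working with $B\Isom_\circ(\CP^n)$ and $B\aut_\circ(\tau_{\CP^n}^\RR)^{p,e}$.
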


For a $\CP^2$-fibration $\pi\colon E\to B$, the fiberwise Pontryagin and Euler classes are
\begin{align*}
p_1^{fw}(\pi) & = 3\omega_{fw}(\pi)^2 -2a_2(\pi)\cdot 1, \\
e^{fw}(\pi) & = 3\omega_{fw}(\pi)^2 + a_2(\pi)\cdot 1.
\end{align*}
A recent result of Baraglia \cite[Theorem 1.3(ii)]{Baraglia} implies that every smooth oriented $\CP^2$-bundle $\pi\colon E\to B$ has trivial Pontryagin and Euler differences,
\begin{equation} \label{eq:fw}
p_1(T_\pi E)  = p_1^{fw}(\pi), \quad e(T_\pi E) = e^{fw}(\pi).
\end{equation}
Baraglia uses \cite[Theorem 1.3(ii)]{Baraglia} as one of several ingredients in a computation of the tautological ring of $\CP^2$. Theorem \ref{thm:taut} applied to $B=B\Diff^+(\CP^2)$ gives a direct path from \eqref{eq:fw} to the computation that circumvents the other ingredients:
triviality of the Pontryagin and Euler differences means that $\pdideal = 0$ so
$$R^*(\CP^2) \to H^*(B\Isom^+(\CP^2);\QQ)$$
is an isomorphism. The target may be identified with $\QQ[\kappa_{p_1^2},\kappa_{p_1^4}]$ by Theorem \ref{thm:taut orient}.
 (Incidentally, this is the only even $n$ for which the target of \eqref{eq:taut surj new} is a polynomial ring.)
We remark that triviality of the Euler difference makes sense---and holds---for any oriented manifold bundle by \cite[\S3.2.1]{HLLRW}.

It is interesting to compare with Randal-Williams' computations \cite[\S4.4]{RW}; they inspire the following observations.
For a closed oriented smooth manifold $M$, the conditions on $\kappa_{\HL_i}$ that come from the family signature theorem give relations among the generators for $R^*(M)$. These relations are sometimes sufficient for determining $R^*(M)$ (as in Corollary \ref{cor:tautological ring}). The following shows they are not sufficient in general.

\begin{theorem} \label{thm:pdnz}
\begin{enumerate}
\item For every $\tau_{\CP^2}^\RR$-fibration
$$\CP^2 \to E \xrightarrow{\pi} B,\quad \zeta \to E,$$
such that
\begin{equation} \label{eq:necessary conditions}
e(\zeta) = e^{fw}(\pi),\quad \kappa_{\HL_i}(\pi,\zeta) = 0,\quad i>1,
\end{equation}
the difference class $pd_{1|0}$ is tautological and generates the ideal $R^*(B)\cap \pdideal$.

\item There exist $\tau_{\CP^2}^\RR$-fibrations satisfying \eqref{eq:necessary conditions} such that $pd_{1|0} \ne 0$. In particular, for such $\tau_{\CP^2}^\RR$-fibrations, $R^*(B) \to R^*(\CP^2)$ is not injective.
\end{enumerate}
\end{theorem}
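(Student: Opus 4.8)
The plan is to work over the ring $H^*(B\aut(\tau_{\CP^2}^\RR);\QQ)$, which by Theorem \ref{thm:pu} (with $G = SO(4)$, $H^*(BSO(4);\QQ) = \QQ[p_1,e]$, $r_1 = 2$ for the class $p_1$ of degree $4$, and $e$ of degree $4$) is the invariant ring $\QQ[a_2,a_3,p_{1|0},p_{1|1},e_{|0},e_{|1}]^{\ZZ/2}$, with the involution acting by $a_k\mapsto (-1)^k a_k$, $p_{1|j}\mapsto (-1)^j p_{1|j}$, $e_{|j}\mapsto(-1)^j e_{|j}$. For part (1), I would first translate the two conditions in \eqref{eq:necessary conditions} into conditions on the generators. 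The Euler condition $e(\zeta) = e^{fw}(\pi)$ says exactly that the Euler difference $ed(\pi,\zeta)$ vanishes; expanding $ed$ in the basis $1,\omega_{fw},\omega_{fw}^2$ and using the formula $e^{fw}(\pi) = c_2^{fw}(\pi) = 3\omega_{fw}(\pi)^2 + a_2(\pi)$ together with $x(\zeta) = e_{|2}\omega_{fw}^2 + e_{|1}\omega_{fw} + e_{|0}$, this forces $e_{|2} = 3$, $e_{|1} = 0$, and $e_{|0} = -a_2$ on the image of the classifying map — i.e. it kills the generators $e_{|1}$ and $e_{|0}$ in the sense that they become, respectively, $0$ and $-a_2$. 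The $\kappa_{\HL_i}$ conditions for $i>1$ I would unwind via the isomorphism of Theorem \ref{thm:extended tautological} (or directly via $\kappa_{\HL_i}(\pi,\zeta) = \int_{\CP^2}\HL_i(\zeta)$, integrating the expansion of $\HL_i(\zeta)$ in powers of $\omega_{fw}$): these pin down all of $\kappa_{\HL_i}$ for $i\geq 2$ to zero, which under the explicit presentation cuts down the remaining polynomial generators. The claim to extract is that after imposing all of \eqref{eq:necessary conditions} the only surviving piece of $\pdideal$ is generated by $pd_{1|0}$, the degree-$0$-in-$\omega_{fw}$ coefficient of $p_1^{fw}(\pi) - p_1(\zeta) = (3\omega_{fw}^2 - 2a_2) - (p_{1|2}\omega_{fw}^2 + p_{1|1}\omega_{fw} + p_{1|0})$; here $pd_{1|2} = 3 - p_{1|2}$ and $pd_{1|1} = -p_{1|1}$ must be shown to lie in the ideal generated by $pd_{1|0}$ together with the $ed$- and $\kappa_{\HL}$-relations, and then one checks $pd_{1|0}$ is a tautological class (it is, being a coefficient of a fiber integral, cf. the $\kappa_{\omega^\ell p_i}$ of Theorem \ref{thm:extended tautological}). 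The bookkeeping is finite-dimensional and degree by degree, so this is essentially a linear-algebra computation once the dictionary is set up.

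For part (2) I would construct an explicit example. The natural source is a $\tau_{\CP^2}^\RR$-fibration that is \emph{not} of the form covered by Theorem \ref{thm:orient} — i.e. whose classifying map to $B\aut(\tau_{\CP^2}^\RR)$ does not lift through $B\Isom^+(\CP^2)$ — but which still satisfies the necessary conditions \eqref{eq:necessary conditions}. Concretely I would take $B$ to be a space (e.g. a product of Eilenberg–MacLane spaces, or a suitable skeleton of $B\aut(\tau_{\CP^2}^\RR)$) and a classifying map chosen so that $e_{|1} = e_{|0} + a_2 = 0$, all $\kappa_{\HL_i} = 0$ for $i>1$, but $pd_{1|0} = 3 - p_{1|2} + 2a_2 \ne 0$ — wait, more carefully: $pd_{1|0} = -2a_2 - p_{1|0}$, so it suffices to arrange $a_2$ and $p_{1|0}$ so this is nonzero while the other constraints hold. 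Since after imposing \eqref{eq:necessary conditions} the remaining generators include things like $a_2$, $p_{1|0}$, $p_{1|1}$, $p_{1|2}$ subject only to relations forced by the $\kappa_{\HL_i}$'s, there is room: the universal such example is $B = $ (the fiber of the map recording $\kappa_{\HL_{\geq 2}}$ and $ed$ on $B\aut(\tau_{\CP^2}^\RR)$), and one shows $pd_{1|0}$ is a nonzero class there by the computation in part (1). The last sentence then follows formally: $pd_{1|0}\in\pdideal$ maps to $0$ under $R^*(B)\to R^*(\CP^2)$ because $R^*(\CP^2) = H^*(B\Isom^+(\CP^2);\QQ)$ is the quotient by the tautological difference classes (Theorem \ref{thm:taut} with $B$ as above, or its proof), yet $pd_{1|0}\ne 0$ in $R^*(B)$, so the map is not injective.

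The main obstacle I anticipate is part (1): making the passage from the conditions \eqref{eq:necessary conditions} to the statement ``$R^*(B)\cap\pdideal = (pd_{1|0})$'' rigorous requires understanding how $\pdideal$ sits inside $H^*(B;\QQ)$ and, crucially, how it intersects the tautological subring — one must show $pd_{1|1}$ and $pd_{1|2}$ are consequences of $pd_{1|0}$ \emph{and} that no further tautological difference classes appear, which means controlling $R^*(B)$ well enough to compute this intersection. I expect the cleanest route is to do everything over the universal base $B$ for fibrations satisfying \eqref{eq:necessary conditions} (a homotopy fiber construction over $B\aut(\tau_{\CP^2}^\RR)$), compute its rational cohomology using the Sullivan model machinery of Theorem \ref{thm:sullivan model} as in the odd-sphere calculation, identify $\pdideal$ and $R^*$ there explicitly, and then invoke naturality to transfer the conclusion to an arbitrary such $B$. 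Part (2) is then just the observation that this universal $pd_{1|0}$ is nonzero, which the same computation delivers for free. A secondary nuisance is the non-orientable case (the definition of $\pdideal$ by pullback to the orientation cover), but for the explicit example one can simply take $B$ simply connected so the issue does not arise.
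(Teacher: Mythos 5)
Your proposal follows essentially the same route as the paper: compute the cohomology of the universal base for $\tau_{\CP^2}^\RR$-fibrations satisfying \eqref{eq:necessary conditions} (a homotopy fiber over $B\aut(\tau_{\CP^2}^\RR)$ built from the explicit presentation of Theorem \ref{thm:pu}), identify $R^*$ and $\pdideal$ there, show $R^*(B)\cap\pdideal=(pd_{1|0})$ via the relations forced by $ed=0$ and $\kappa_{\HL_i}=0$, and read off part (2) from the nonvanishing of the universal $pd_{1|0}$. The one point to be careful about when executing the plan is that $pd_{1|1}$ itself is odd under the $\ZZ/2$-action and only its square $pd_{1|1}^2$ meets $R^*(B)$; the paper's key identity $45\kappa_{\HL_2}=6pd_{1|0}-pd_{1|1}^2$ is what makes $pd_{1|1}^2$ a multiple of $pd_{1|0}$ once $\kappa_{\HL_2}=0$, which is exactly the mechanism your ``linear-algebra bookkeeping'' needs to produce.
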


\begin{remark}
The difference class $pd_{1|0}$ is directly related to the class $4\kappa_{p_1^2} - 7\kappa_{ep_1}$ featured in the computations of \cite[\S4.4]{RW}: for every $\tau_{\CP^2}^\RR$-fibration satisfying \eqref{eq:necessary conditions},
$$pd_{1|0} = \tfrac{1}{21}\big(4\kappa_{p_1^2} - 7\kappa_{ep_1}\big).$$
In particular, vanishing of the Pontryagin difference for smooth $\CP^2$-bundles explains why
\begin{equation} \label{eq:rcp2}
R^*(\CP^2) = \QQ[\kappa_{p_1^2},\kappa_{ep_1},\kappa_{p_1^4}]/(4\kappa_{p_1^2} - 7\kappa_{ep_1}),
\end{equation}
answering the question posed after Theorem D in \cite{RW}, but we stress that \eqref{eq:rcp2} should be viewed as a corollary of \cite[Theorem 1.3(ii)]{Baraglia}.
\end{remark}

\begin{remark}
For a closed oriented smooth manifold $M$, triviality of the Euler difference together with the conditions on $\kappa_{\HL_i}$ that come from the family signature theorem are necessary for being able to reduce the `structure group' of a $\tau_M$-fibration from $\aut(\tau_M)$ to $\Diff^+(M)$ rationally. For $S^m$ these conditions are sufficient (Corollary \ref{cor:sphere bundle}), but they are not sufficient in general. Indeed, the $\tau_{\CP^2}^\RR$-fibration in Theorem \ref{thm:pdnz}(2) has a non-trivial Pontryagin difference, so cannot be rationally equivalent to a smooth $\CP^2$-bundle.
\end{remark}

\begin{remark}
Theorem \ref{thm:taut} shows that the structure of the tautological ring $R^*(\CP^n)$ is intimately linked to the behavior of the Pontryagin differences of smooth oriented $\CP^n$-bundles.
For $n=2$, the Pontryagin difference vanishes, but this turns out to be a low-dimensional phenomenon. For $n$ sufficiently large, there are smooth oriented $\CP^n$-bundles with non-trivial Pontryagin differences. We thank Oscar Randal-Williams for suggesting a way to construct such bundles in a comment to an earlier version of this paper.

The rational homotopy groups of the space $\aut(\CP^n)/\tDiff(\CP^n)$, which classifies homotopically trivial block bundles with fiber $\CP^n$, can be computed using the surgery exact sequence (see e.g.~\cite[\S3]{BM1} for a review). After a few manipulations, this assumes the form of an exact sequence
$$0 \to \pi_k(\aut(\CP^n)/\tDiff(\CP^n))\tensor \QQ \to \bigoplus_{i\geq 1} H^{4i-k}(\CP^n;\QQ) \xrightarrow{\int_{\CP^n}} \QQ,$$
where the first map sends the equivalence class of a homotopically trivial block bundle $\pi\colon E\to S^k$, with fiber $\CP^n$ and stable fiberwise tangent bundle $\zeta$, to the sequence of cohomology classes $w^{-1}(\HL d_i(\pi,\zeta)) \in H^{4i-k}(\CP^n;\QQ)$, where $\HL d_i(\pi,\zeta)$ is the $i$th `$L$-class difference' and $w\colon H^{*-k}(\CP^n;\QQ) \to H^*(E;\QQ)$ comes from the Wang sequence associated to $\pi$. In particular, this shows the existence of block bundles over spaces within the rational homotopy type of $S^k$ with non-trivial $L$-differences and hence Pontryagin differences. To promote such block bundles to smooth bundles, \cite[Corollary D]{BuLa} implies that
$$\pi_k(\aut(\CP^n)/\Diff(\CP^n)) \tensor \QQ \to \pi_k(\aut(\CP^n)/\tDiff(\CP^n))\tensor \QQ$$
is surjective as long as $k$ is in the pseudoisotopy stable range for $\CP^n$, which holds if $2n \geq \max(2k+7,3k+4)$ by \cite{Igusa}.
\end{remark}

The applicability of Theorem 3.8 is not limited to the examples presented here.
Further computations and applications using Theorem \ref{thm:sullivan model} are worked out in the PhD thesis of Nils Prigge \cite{Prigge-thesis}.

The original motivation for this work was to understand the relation between the generalized Miller-Morita-Mumford classes and certain classes defined using graph complexes in the cohomology of the classifying space of the block diffeomorphism group of the manifold
$\#^g S^d\times S^d\setminus \interior D^{2d}$, see \cite{BM}. This application will be treated in a separate paper.

\begin{remark}
In \cite{Berglund-fibrations}, we constructed a different rational model for $B\aut_\circ(\xi)$, but this model is insufficient for the applications presented here, because it does not say anything about the universal $\xi$-fibration or the classifying map for the total bundle. Theorem \ref{thm:sullivan model}, on the other hand, does this. A precursor to Theorem \ref{thm:odd spheres} was obtained in \cite{Berglund-fibrations}, but it lacks an interpretation in terms of characteristic classes.
\end{remark}

{\bf Acknowledgements.}
We are grateful to Nils Prigge for his interest in this work and for numerous discussions.
We thank Oscar Randal-Williams for useful conversations and for bringing our attention to the account of the family signature theorem found in \cite[Appendix A]{KRW}.
The author was supported by the Swedish Research Council through grant no.~2015-03991.

\section{\texorpdfstring{$\xi$}{\textxi}-fibrations and characteristic classes}
In this section, we discuss the notion of a $\xi$-fibration and we give a homotopy theoretic model for the universal $\xi$-fibration. We also define generalized Miller-Morita-Mumford classes for $\xi$-fibrations, for bundles $\xi$ over Poincar\'e duality spaces.

\subsection{Bundles}
In this paper, the term \emph{bundle} (without further specification) can be taken to mean numerable fiber bundle (in the sense of Dold \cite[\S7]{Dold}) with fixed fiber $F$ and structure group $G$. 
Our main applications will be to vector bundles over CW-complexes and the reader can safely read the paper with this interpretation in mind without missing any of the main points. Nevertheless, certain arguments become clearer when expounded in greater generality and the more general results may be of independent interest.

By a \emph{bundle map} $\xi\to \xi'$ we understand a commutative diagram
$$
\xymatrix{E \ar[d] \ar[r]^-\varphi & E' \ar[d] \\ B \ar[r]^-f & B'}
$$
such that for every $b\in B$ the induced map on fibers $\varphi_b \colon E_b \to E_{f(b)}'$ is an isomorphism in the appropriate sense, as dictated by the structure group, cf.~\cite[\S2.5]{Steenrod}. Thus, for (oriented) vector bundles, we require $\varphi_b$ to be a linear (orientation preserving) isomorphism, as in \cite[p.26]{MS}, and so on.

\begin{remark}
More generally, the terms `bundle' and `bundle map' can be taken to mean `$\FF$-fibration' and `$\FF$-map', respectively, in the sense of May \cite{May}, for $\FF$ a category of fibers in the sense of \cite[Definition 4.1]{May} that satisfies the hypotheses of the classification theorem \cite[Theorem 9.2]{May}. The `structure group' $G$ will then mean the grouplike monoid of $\FF$-self-maps of the typical fiber $F$ as in \cite[Definition 4.3]{May}. Numerable fiber bundles are $\FF$-fibrations for a suitable choice of $\FF$, see Theorem 3.8 and Example 6.11 of \cite{May}. Another example: `bundle' could also mean `fibration with fiber weakly homotopy equivalent to $F$' for a fixed CW-complex $F$, in which case $G$ is the grouplike monoid $\aut(F)$ of homotopy automorphisms of $F$, and `bundle maps' are commutative diagrams as above with $\varphi_b$ a weak homotopy equivalence, cf.~\cite[Example 6.6]{May}.
\end{remark}

We let $\map(\xi,\xi')$ denote the space of bundle maps from $\xi$ to $\xi'$. Two key properties we need are the following:

\begin{itemize}
\item (Covering homotopy property) For bundles $\xi$ and $\xi'$, the forgetful map $\map(\xi,\xi') \to \map(B,B')$ is a fibration.

\item (Existence of a universal bundle) There is a bundle $\gamma$ such that $\map(\xi,\gamma)$ is weakly contractible for every bundle $\xi$.
\end{itemize}

\begin{remark}
The covering homotopy property as stated here is equivalent to the covering homotopy property for $\xi'$ in the definition of $\FF$-fibrations \cite[Definition 2.1]{May}. To see this, note that specifying a diagram
$$
\xymatrix{A\ar[d] \ar[r] & \map(\xi,\xi') \ar[d] \\
A\times I \ar[r]  \ar@{-->}[ur] & \map(B,B')}
$$
is tantamount to specifying a bundle map $A\times \xi \to \xi'$ and a homotopy $A\times B\times I \to B'$ of the base.
Existence of the lift is precisely the covering homotopy property for $\xi'$.
\end{remark}

\begin{remark}
The base space of the universal bundle $\gamma$ may be identified with $BG$ up to weak homotopy equivalence, and $\gamma$ is characterized by any of the following properties:
\begin{enumerate}
\item For CW-complexes $B$, the set of homotopy classes of maps $[B,BG]$ is in natural bijection with the set of equivalence classes of bundles over $B$.
\item The total space of the principal bundle associated to $\gamma$ is weakly contractible.
\item The space $\map(\xi,\gamma)$ is weakly contractible for every bundle $\xi$.
\end{enumerate}
The properties are equivalent, see \cite{BHP,Morgan}. Note that the last condition is equivalent to Theorem 3(4) of \cite{Morgan}. It is verified in the course of the proof of \cite[Theorem 9.2]{May} that (3) is satisfied, see the bottom of p.50.
\end{remark}

\subsection{\texorpdfstring{$\xi$}{\textxi}-fibrations}
Fix a bundle $\xi$ with projection $p\colon T \to X$ over a CW-complex $X$.

\begin{definition}
A \emph{$\xi$-fibration} over a space $B$ is a pair $(\pi,\zeta)$ consisting of
\begin{itemize}
 \item a fibration $\pi\colon E\to B$,
 \item a bundle $\zeta$ over the total space $E$,
\end{itemize}
such that $\zeta|_{E_b} \sim \xi$ for every $b\in B$.
We will refer to $\pi$ as the \emph{underlying fibration} and $\zeta$ as the \emph{total bundle} of the $\xi$-fibration $(\pi,\zeta)$.
\end{definition}
Here $\xi \sim \eta$ means that $\xi$ and $\eta$ are weakly equivalent, where a weak equivalence of bundles is a bundle map covering a weak homotopy equivalence. In particular, every fiber $E_b$ in a $\xi$-fibration is weakly equivalent to $X$, i.e., the underlying fibration is an `$X$-fibration'.

\subsection{The universal \texorpdfstring{$\xi$}{\textxi}-fibration}
We let $\aut(\xi)$ denote the topological monoid of bundle maps
$$
\xymatrix{T \ar[r]^-\varphi \ar[d]^-p & T \ar[d]^-p \\ X \ar[r]^-f & X}
$$
such that $f$ is a homotopy equivalence.

The classifying space $B\aut(\xi)$ classifies $\xi$-fibrations, in the sense that the set of homotopy classes of maps from a CW-complex $B$ to $B\aut(\xi)$ is in natural bijection with the set of equivalence classes of $\xi$-fibrations over $B$. This follows, indirectly, from Theorem 2.7 below together with \cite[\S11]{May}. See also the discussion in \cite[\S2]{Berglund-fibrations}. Alternatively, one can apply May's theory of $\FF$-fibrations, taking the category of fibers $\FF$ to be the category of bundles weakly equivalent to $\xi$ and letting the morphisms be the weak equivalences of bundles as defined above.

\begin{remark}
Some extremal examples:
\begin{itemize}
\item If $\xi$ is the trivial bundle over $X$ with fiber a point, then a $\xi$-fibration is the same thing as a fibration with fiber $X$, and $\aut(\xi) = \aut(X)$ is the grouplike topological monoid of self-homotopy equivalences of $X$.

\item If $X$ is a point and $\xi$ is the vector space $\RR^n$, viewed as a vector bundle over $X$, then a $\xi$-fibration over $B$ is the same thing as an $n$-dimensional real vector bundle over $B$, and $\aut(\xi) = \GL_n(\RR)$.
\end{itemize}
\end{remark}

The underlying fibration of the universal $\xi$-fibration may be identified with
$$\pi^{univ}\colon B\big(*,\aut(\xi),X\big) \to B\aut(\xi),$$
and its total bundle $\zeta^{univ}$ with
$$p_*\colon B\big(*,\aut(\xi),T\big) \to B\big(*,\aut(\xi),X\big).$$
In what follows, we will give another model for the universal $\xi$-fibration.
We will use the following two lemmas on homotopy orbit spaces.

For a grouplike monoid $\Gmonoid$ and a right $\Gmonoid$-space $X$, we use the geometric bar construction $B\big(X,\Gmonoid,*\big)$ of \cite[\S7]{May} as a model for the homotopy orbit space $X \dquot \Gmonoid$, though the results will of course be model independent. For $x_0\in X$, we let $\Gmonoid_{[x_0]} \subseteq \Gmonoid$ denote the stabilizer of $[x_0] \in \pi_0 X$, i.e.,
$$\Gmonoid_{[x_0]} = \set{g\in \Gmonoid}{[x_0 \cdot g] = [x_0]},$$
and we let $X_{x_0} \subseteq X$ denote the connected component containing $x_0$.

The following also appears as Lemma 4.10 in \cite{BM}. We repeat the short proof for completeness.

\begin{lemma} \label{lemma:homotopy orbits}
Consider a commutative square of the form
$$
\xymatrix{\Gmonoid \ar[d]^-\varphi \ar[r]^-{x_0 \cdot} & X \ar[d]^-f \\ \monoid \ar[r]^-{y_0\cdot } & Y}
$$
where $\Gmonoid$ and $\monoid$ are grouplike monoids, $\varphi$ is a map of topological monoids, $x_0 \in X$, $y_0\in Y$ and $f$ is a map of right $\Gmonoid$-spaces such that $f(x_0) = y_0$.
If the square is homotopy cartesian, then the induced map on components
$$\left( X \dquot \Gmonoid \right)_{x_0} \to \left( Y\dquot \monoid \right)_{y_0}$$
is a weak homotopy equivalence.
\end{lemma}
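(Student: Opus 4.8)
The plan is to reduce the statement to a comparison of three ordinary fibration sequences via the geometric bar construction. First I would observe that for a grouplike monoid $\Gmonoid$ acting on the right of $X$, there is a fibration sequence
$$X \to B(X,\Gmonoid,*) \to B\Gmonoid,$$
obtained by applying $B(-,\Gmonoid,*)$ to the map $X \to *$ (this is the standard fact that the bar construction computes homotopy orbits, together with the fact that $B(X,\Gmonoid,*) \to B(*,\Gmonoid,*) = B\Gmonoid$ has fiber $X$; see \cite[\S7]{May}). Passing to the connected component $B(X,\Gmonoid,*)_{x_0}$ that contains $x_0$, and using that $\Gmonoid$ is grouplike so that $B\Gmonoid$ is connected, the relevant piece of the long exact sequence of homotopy groups reads
$$\cdots \to \pi_{n+1}(B\Gmonoid) \to \pi_n(X,x_0) \to \pi_n\big(B(X,\Gmonoid,*)_{x_0}\big) \to \pi_n(B\Gmonoid) \to \cdots,$$
and likewise for $\monoid$ acting on $Y$ with basepoint $y_0$. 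The map $(\varphi,f)$ induces a map of these two long exact sequences.

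Next I would feed in the hypothesis. Since $\varphi\colon \Gmonoid \to \monoid$ is a map of grouplike topological monoids, it induces $B\varphi\colon B\Gmonoid \to B\monoid$ with $\pi_n(B\Gmonoid) = \pi_{n-1}(\Gmonoid)$ and similarly for $\monoid$. The square in the statement being homotopy cartesian means precisely that the square
$$
\xymatrix{\Gmonoid \ar[d]^-\varphi \ar[r]^-{x_0 \cdot} & X \ar[d]^-f \\ \monoid \ar[r]^-{y_0\cdot } & Y}
$$
induces, for every choice of basepoint in $\Gmonoid$, a long exact Mayer--Vietoris-type sequence relating the homotopy fibers; concretely, the homotopy fiber of $x_0\cdot\colon \Gmonoid \to X$ maps by a weak equivalence to the homotopy fiber of $y_0\cdot\colon \monoid \to Y$. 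Then I would compare, degree by degree, the map of long exact sequences above with the long exact sequence of the map of fibrations $\Gmonoid \to X$ versus $\monoid \to Y$: up to the usual dimension shift, the connecting maps $\pi_{n+1}(B\Gmonoid) \to \pi_n(X)$ agree with the maps $\pi_n(\Gmonoid) \to \pi_n(X)$ induced by $x_0\cdot$ (this identification of the connecting homomorphism of the homotopy-orbit fibration with the action map is the only genuinely technical input, and is where I expect to spend most of the care). Given that, a diagram chase — the five lemma applied to the map of long exact sequences, using that $B\varphi$ is an isomorphism on $\pi_*$ wherever the homotopy fibers of $x_0\cdot$ and $y_0\cdot$ agree — shows $\pi_n\big(B(X,\Gmonoid,*)_{x_0}\big) \to \pi_n\big(B(Y,\monoid,*)_{y_0}\big)$ is an isomorphism for all $n$, as desired.

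Alternatively, and perhaps more cleanly, I would set this up entirely in terms of homotopy fibers without ever writing down homotopy groups: form the commutative cube whose top face is the given square and whose bottom face is $\Gmonoid \to *$ over $\monoid \to *$; the vertical homotopy fibers are $X \dquot \Gmonoid$, on one side built from $\Gmonoid \to X$, and $Y \dquot \monoid$ on the other, built from $\monoid \to Y$, and the statement that the top square is homotopy cartesian is exactly what is needed to conclude the induced map of total homotopy fibers is an equivalence. Either way, the main obstacle is the bookkeeping identifying the fibration sequence $X \to X\dquot \Gmonoid \to B\Gmonoid$ and its connecting data with the action map $\Gmonoid \to X$; once that is pinned down the rest is formal. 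Restricting to the indicated path components at the end is harmless because $\Gmonoid$ and $\monoid$ are grouplike, so $B\Gmonoid$ and $B\monoid$ are connected and the only disconnectedness comes from $\pi_0 X$ and $\pi_0 Y$, which $f$ handles compatibly with the action.
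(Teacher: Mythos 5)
Your proposal is correct and follows essentially the same route as the paper: by \cite[Proposition 7.9]{May} the action map $x_0\cdot\colon\Gmonoid\to X$ extends to a homotopy fibration sequence $\Gmonoid\to X\to X\dquot\Gmonoid$, so the homotopy cartesian hypothesis immediately yields a weak equivalence $\Omega_{x_0}(X\dquot\Gmonoid)\to\Omega_{y_0}(Y\dquot\monoid)$ of horizontal homotopy fibers, which one then deloops --- this is precisely your ``homotopy fibers'' formulation, and the identification of the connecting data of $X\to X\dquot\Gmonoid\to B\Gmonoid$ with the action map, which you rightly flag as the technical crux, is exactly the content of May's proposition. One caution: your first packaging via the five lemma does not literally apply, since the outer vertical maps $\pi_*(B\Gmonoid)\to\pi_*(B\monoid)$ and $\pi_*(X)\to\pi_*(Y)$ need not be isomorphisms (the hypothesis only equates the homotopy fibers of the horizontal maps), so you should argue with the rotated fibration sequence as in your second formulation rather than chasing the two long exact sequences directly.
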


\begin{proof}
By \cite[Proposition 7.9]{May} the horizontal maps extend to homotopy fibration sequences
$$
\xymatrix{\Gmonoid \ar[d]^-\varphi \ar[r]^-{x_0 \cdot} & X \ar[d]^-f  \ar[r] & X \dquot \Gmonoid \ar[d] \\ \monoid \ar[r]^-{y_0\cdot } & Y \ar[r] & Y \dquot \monoid}
$$
The induced map on homotopy fibers, $\Omega_{x_0} \left( X \dquot \Gmonoid \right) \to \Omega_{y_0} \left( Y \dquot \monoid \right)$,
is a weak homotopy equivalence if the left square is assumed to be homotopy cartesian. Delooping this gives the desired result.
\end{proof}

\begin{lemma} \label{lemma:component}
The inclusion map
$$X_{x_0} \dquot \Gmonoid_{[x_0]} \to \left( X \dquot \Gmonoid \right)_{x_0}$$
is a weak homotopy equivalence.
\end{lemma}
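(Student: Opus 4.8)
The plan is to deduce the statement from Lemma~\ref{lemma:homotopy orbits} by exhibiting a suitable homotopy cartesian square. First I would record two elementary preliminaries. (i) $\Gmonoid_{[x_0]}$ is a grouplike submonoid of $\Gmonoid$: it contains the identity (since $[x_0]\cdot[e]=[x_0]$) and is closed under multiplication (if $[x_0]\cdot[g]=[x_0]$ and $[x_0]\cdot[h]=[x_0]$ then $[x_0]\cdot[gh]=[x_0]$), and $\pi_0(\Gmonoid_{[x_0]})$ is the stabilizer of $[x_0]$ for the action of the group $\pi_0(\Gmonoid)$ on $\pi_0(X)$, hence a subgroup. (ii) $X_{x_0}$ is a right $\Gmonoid_{[x_0]}$-space: if $g\in\Gmonoid_{[x_0]}$ and $x\in X_{x_0}$, then $[x\cdot g]=[x]\cdot[g]=[x_0]\cdot[g]=[x_0]$, so $x\cdot g\in X_{x_0}$.

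Next I would consider the square
$$
\xymatrix{\Gmonoid_{[x_0]} \ar[d] \ar[r]^-{x_0\cdot} & X_{x_0} \ar[d] \\ \Gmonoid \ar[r]^-{x_0\cdot} & X}
$$
whose vertical maps are the inclusions, and prove that it is homotopy cartesian; this is the heart of the matter. The point is that $\Gmonoid_{[x_0]}=\set{g\in\Gmonoid}{x_0\cdot g\in X_{x_0}}$ is literally the preimage of $X_{x_0}$ under $g\mapsto x_0\cdot g$, so the square is a strict pullback of spaces, and one only has to see that this strict pullback already computes the homotopy pullback. For that I would invoke the elementary fact that a path in $X$ which meets $X_{x_0}$ lies entirely in $X_{x_0}$; it follows that the space of paths in $X$ ending in $X_{x_0}$ is just the path space $(X_{x_0})^I$, whose source-endpoint evaluation map factors as $(X_{x_0})^I\xrightarrow{\mathrm{ev}_0}X_{x_0}\hookrightarrow X$. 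Plugging this into the standard model $\Gmonoid\times_X (X^I\times_X X_{x_0})$ of the homotopy pullback, the requirement that a path start at $x_0\cdot g$ forces $g\in\Gmonoid_{[x_0]}$, and what remains is $\Gmonoid_{[x_0]}\times_{X_{x_0}}(X_{x_0})^I$, the pullback of the path-space fibration $(X_{x_0})^I\xrightarrow{\mathrm{ev}_0}X_{x_0}$ along $\Gmonoid_{[x_0]}\xrightarrow{x_0\cdot}X_{x_0}$. This is a fibration with weakly contractible fibers over $\Gmonoid_{[x_0]}$, split by the canonical map $g\mapsto(g,\mathrm{const}_{x_0\cdot g})$ coming from the strict pullback, so the comparison map is a weak equivalence and the square is homotopy cartesian.

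Granting the square, Lemma~\ref{lemma:homotopy orbits} applies with $\monoid=\Gmonoid$, with $\varphi$ the inclusion $\Gmonoid_{[x_0]}\hookrightarrow\Gmonoid$, with $f$ the inclusion $X_{x_0}\hookrightarrow X$, and with $y_0=x_0$, and gives that
$$
\big(X_{x_0}\dquot\Gmonoid_{[x_0]}\big)_{x_0}\to\big(X\dquot\Gmonoid\big)_{x_0}
$$
is a weak homotopy equivalence. To finish I would observe that $X_{x_0}\dquot\Gmonoid_{[x_0]}=B(X_{x_0},\Gmonoid_{[x_0]},*)$ is connected, since the canonical map $X_{x_0}\to B(X_{x_0},\Gmonoid_{[x_0]},*)$ is surjective on path components and $X_{x_0}$, being a single path component of $X$, is connected. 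Hence the displayed source is all of $X_{x_0}\dquot\Gmonoid_{[x_0]}$ and the map is precisely the inclusion in the statement.

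The only step I expect to require care is the verification that the square is homotopy cartesian---concretely, the passage from ``preimage of the path component $X_{x_0}$'' to ``homotopy pullback''---while everything else is bookkeeping with the bar construction and the invocation of Lemma~\ref{lemma:homotopy orbits}. An alternative route would be to split $X$ into its path components, observe that $X\dquot\Gmonoid$ decomposes as the disjoint union of the $B(X_S,\Gmonoid,*)$ over the $\pi_0(\Gmonoid)$-orbits $S\subseteq\pi_0(X)$, and identify $(X\dquot\Gmonoid)_{x_0}$ with the summand for the orbit of $[x_0]$; but matching that summand with $X_{x_0}\dquot\Gmonoid_{[x_0]}$ is an ``induction from a submonoid'' statement that still seems to require Lemma~\ref{lemma:homotopy orbits} (or a direct simplicial argument), so routing everything through the single square above appears most economical.
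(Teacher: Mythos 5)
Your proposal is correct and is essentially the paper's own proof: the paper likewise applies Lemma \ref{lemma:homotopy orbits} to the square with vertical inclusions $\Gmonoid_{[x_0]}\hookrightarrow\Gmonoid$ and $X_{x_0}\hookrightarrow X$, simply asserting that it is homotopy cartesian. Your extra verifications (that the strict pullback is already a homotopy pullback because paths meeting $X_{x_0}$ stay in it, and that $X_{x_0}\dquot\Gmonoid_{[x_0]}$ is connected so the map is the full inclusion) are exactly the details the paper leaves implicit, and they are sound.
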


\begin{proof}
Apply the previous lemma to the homotopy cartesian square
$$
\xymatrix{\Gmonoid_{[x_0]} \ar[r]^-{x_0 \cdot} \ar[d] & X_{x_0} \ar[d] \\ \Gmonoid \ar[r]^-{x_0 \cdot} & X,}
$$
where the vertical maps are the inclusions.
\end{proof}

Let $\map(X,BG)_\xi$ denote the component of a classifying map $f_\xi\colon X \to BG$ for $\xi$ and let $\aut(X)_{\xi}$ denote the stabilizer of the homotopy class of the classifying map, i.e., $\aut(X)_\xi = \aut(X)_{[f_\xi]}$ in the notation above. We remark that we may identify $\aut(X)_\xi$ with the monoid of homotopy automorphisms $f$ of $X$ such that $f^*(\xi)$ and $\xi$ are equivalent as bundles over $X$.

The following result gives a homotopy theoretical model for the universal $\xi$-fibration. It extends \cite[Corollary 2.4]{Berglund-fibrations} and \cite[Proposition 4.11]{BM}, which only identified the homotopy type of the base $B\aut(\xi)$. A key point is that the total bundle is classified by an evaluation map---this is what will allow us to write down explicit formulas for its characteristic classes in the relative Sullivan model later.

\begin{theorem} \label{thm:universal xi-fibration}
The underlying fibration $\pi^{univ}$ of the universal $\xi$-fibration is weakly equivalent to the map
$$
B\big(\map(X,BG)_\xi,\aut(X)_\xi,X\big) \to B\big(\map(X,BG)_\xi,\aut(X)_\xi,*\big),
$$
and the total bundle $\zeta^{univ}$ is classified by the map
$$ev\colon B\big(\map(X,BG)_\xi,\aut(X)_\xi,X\big) \to BG$$
induced by the evaluation map
$$\map(X,BG)_\xi \times X \to BG.$$
\end{theorem}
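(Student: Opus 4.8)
The plan is to identify each of the three objects (underlying fibration, total bundle, classifying map of the total bundle) with its May-style bar-construction model and then transport along a zig-zag of weak equivalences built from Lemmas \ref{lemma:homotopy orbits} and \ref{lemma:component}. First I would recall that, by definition, the universal $\xi$-fibration has underlying fibration $\pi^{univ}\colon B(*,\aut(\xi),X)\to B\aut(\xi)$ and total bundle $p_*\colon B(*,\aut(\xi),T)\to B(*,\aut(\xi),X)$. The key comparison input is that there is a homotopy fibration sequence $\map(X,BG)_\xi \to B\aut(\xi) \to BG^{?}$-type picture; more precisely, evaluation at a basepoint gives a fibration $\map(X,BG)_\xi \to BG$ whose fiber over $f_\xi$ recovers (the relevant component of) the space of bundles equivalent to $\xi$, and $\aut(X)$ acts on $\map(X,BG)$ by precomposition. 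The monoid $\aut(\xi)$ sits in a homotopy fibration $\map(X,G)_{?} \to \aut(\xi) \to \aut(X)_\xi$ (automorphisms of $\xi$ covering a fixed $f$ form a torsor-like space modeled on gauge transformations), which is exactly the statement that $\aut(\xi)$ is the homotopy pullback of $\aut(X)_\xi \to \{*\} \leftarrow \map(X,BG)_\xi$-data; I would make this precise using the universal property of $\gamma$ (the space $\map(\xi,\gamma)$ is weakly contractible) so that a bundle map covering $f$ is the same homotopy-theoretic datum as a path in $\map(X,BG)$ from $f^*f_\xi$ to $f_\xi$.

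Next, I would apply Lemma \ref{lemma:homotopy orbits} to the homotopy cartesian square comparing $\aut(\xi) \xrightarrow{\cdot x_0} (\text{point-or-torsor space})$ with $\aut(X)_\xi \xrightarrow{\cdot [f_\xi]} \map(X,BG)_\xi$, to conclude that the bases agree: $(\text{pt}\dquot \aut(\xi)) \simeq (\map(X,BG)_\xi \dquot \aut(X)_\xi)$, which is the content of \cite[Corollary 2.4]{Berglund-fibrations} and \cite[Proposition 4.11]{BM} already cited. The new point is to upgrade this from bases to the whole fibration and the total bundle. For this I would run the same homotopy-orbit comparison \emph{fiberwise over $X$}: the universal $X$-fibration $B(*,\aut(\xi),X)\to B\aut(\xi)$ is pulled back from $B(*,\aut(X)_\xi,X) \to B\aut(X)_\xi$ along the base map above, and since the comparison square is homotopy cartesian, Lemma \ref{lemma:homotopy orbits} (applied to the action on $X$ rather than on a point) gives the asserted weak equivalence $B(\map(X,BG)_\xi,\aut(X)_\xi,X) \to B(\map(X,BG)_\xi,\aut(X)_\xi,*)$ of fibrations. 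Lemma \ref{lemma:component} is used to replace homotopy-orbit spaces by their relevant path components wherever the distinction between $\aut(\xi)$ and the stabilizer of $[f_\xi]$ enters.

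Finally, for the total bundle, the tautological evaluation map $\map(X,BG)_\xi \times X \to BG$ is $\aut(X)_\xi$-equivariant (precomposition on the first factor, the given action on $X$, trivial on $BG$), hence descends to $ev\colon B(\map(X,BG)_\xi,\aut(X)_\xi,X)\to BG$. I would then check that $ev$ classifies $\zeta^{univ}$ by tracing the equivalence through the bar constructions: on the fiber $X$ over a point of the base, $ev$ restricts to the chosen classifying map $f_\xi\colon X\to BG$ for $\xi$, and the whole diagram is compatible with the $\aut(\xi)$-action, so by naturality of classifying maps (using weak contractibility of $\map(\eta,\gamma)$ to rigidify) the bundle classified by $ev$ is weakly equivalent, as a family restricting fiberwise to $\xi$, to $p_*$. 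The main obstacle I anticipate is the bookkeeping needed to make the homotopy pullback square defining $\aut(\xi)$ genuinely commute on the nose with the bar-construction models—i.e., rectifying the \emph{covering} data (a bundle map over $f$) into a strictly functorial point of $\map(X,BG)$—so that Lemma \ref{lemma:homotopy orbits} applies verbatim; the covering homotopy property of $\map(-,-)$ and weak contractibility of $\map(\xi,\gamma)$ are exactly the tools that resolve this, but assembling them coherently over the simplicial levels of the bar construction is where the real work lies.
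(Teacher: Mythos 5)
Your treatment of the first claim is essentially the paper's own argument: you form the square comparing $\aut(\xi)\to\map(\xi,\gamma)$ with $\aut(X)_\xi\to\map(X,BG)_\xi$, observe it is homotopy cartesian (pullback plus the covering homotopy property), apply Lemma \ref{lemma:homotopy orbits} together with contractibility of $\map(\xi,\gamma)$ to identify the bases, and then upgrade to the fibrations over them using that the relevant bar-construction squares are homotopy cartesian (the paper invokes Theorem 7.6 and Proposition 7.8 of \cite{May} for exactly this). Lemma \ref{lemma:component} handles the passage to the stabilizer component, as you say. No complaints there.

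The gap is in the second claim. Knowing that $ev$ restricts to $f_\xi$ on a fiber and is ``compatible with the $\aut(\xi)$-action'' does not yet show that $ev$ classifies $\zeta^{univ}$: two maps out of the total space $B\big(\map(X,BG)_\xi,\aut(X)_\xi,X\big)$ into $BG$ can agree on a fiber without being homotopic, and ``naturality of classifying maps'' gives you uniqueness of the classifying map of a given bundle, not identification of an arbitrary map with that classifying map. What is actually needed --- and what the paper supplies --- is an explicit bundle map from (a model of) the total bundle to the universal bundle $E_\infty\to BG$ covering $ev$. Concretely, the $\aut(\xi)$-equivariant map
$$\map(\xi,\gamma)\times T \to E_\infty,\qquad \big((f,\varphi),t\big)\mapsto \varphi(t),$$
induces on bar constructions a map $B\big(\map(\xi,\gamma),\aut(\xi),T\big)\to E_\infty$ which, by the universal property of the pullback of $E_\infty$ along $ev$, exhibits the total bundle as pulled back from $\gamma$ along $ev$. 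This one formula is the decisive step (and is the reason the theorem is useful later, since it is what gets transcribed into the cocycle formulas of Theorem \ref{thm:sullivan model}); your proposal has all the surrounding machinery in place but never produces this map, and the ``rigidification'' you defer to is precisely it.
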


\begin{proof}
Fix a bundle map
$$
\xymatrix{T \ar[d]^-p \ar[r] & E_\infty \ar[d]^-{p_\infty} \\ X \ar[r] & BG}
$$
that classifies $\xi$. We may assume that the square is a pullback. Then, the diagram
$$
\xymatrix{\aut(\xi) \ar[d] \ar[r] & \map(\xi,\gamma) \ar[d] \\ \aut(X)_\xi \ar[r] & \map(X,BG)_\xi}
$$
is a pullback diagram. The vertical maps are fibrations by the covering homotopy property, so it is a homotopy pullback diagram. By Lemma \ref{lemma:homotopy orbits} the induced map
\begin{equation*} \label{eq:we}
B\big(\map(\xi,\gamma\big),\aut(\xi),*) \to B\big(\map(X,BG)_\xi,\aut(X)_\xi,*\big)
\end{equation*}
is a weak homotopy equivalence. Since $\map(\xi,\gamma)$ is contractible, the map
$$B\big(\map(\xi,\gamma),\aut(\xi),*\big) \to B\aut(\xi)$$
is a weak homotopy equivalence. This shows that the bottom horizontal maps in the following diagram are weak homotopy equivalences.
{\small $$
\xymatrix{B\big(*,\aut(\xi),X\big) \ar[d] & \ar[l]_-\sim B\big(\map(\xi,\univ),\aut(\xi),X\big) \ar[d] \ar[r]^-\sim & B\big( \map(X,BG)_{\xi},\aut(X)_\xi,X \big) \ar[d]\\
B\aut(\xi) & \ar[l]_-\sim B\big(\map(\xi,\gamma),\aut(\xi),* \big) \ar[r]^-\sim & B\big( \map(X,BG)_{\xi},\aut(X)_\xi,* \big) }
$$}
The squares are homotopy cartesian by Theorem 7.6 and Proposition 7.8 of \cite{May}, so it follows that the top horizontal maps are weak homotopy equivalences. This proves the first claim.

To prove the second claim, first note that we have an equivalence of bundles
$$
\xymatrix{B\big(\map(\xi,\univ),\aut(\xi),T\big) \ar[d] \ar[r]^-\sim & B\big(*,\aut(\xi),T\big) \ar[d] \\
B\big(\map(\xi,\univ),\aut(\xi),X\big) \ar[r]^-\sim & B\big(*,\aut(\xi),X\big). }
$$
Next, consider the diagram of bundle maps
$$
\xymatrix{B\big(\map(\xi,\gamma),\aut(\xi),T\big) \ar[d] \ar@{-->}[r] \ar@/^1.2pc/[rr]^-{\overline{ev}} & E \ar[d] \ar[r] & E_\infty \ar[d] \\ B\big(\map(\xi,\gamma),\aut(\xi),X\big) \ar[r]^-\sim & B\big(\map(X,BG)_\xi,\aut(X)_\xi,X\big) \ar[r]^-{ev} & BG,}
$$
where the right square is the pullback of the universal bundle along the evaluation map, the map $\overline{ev}$ is induced by the $\aut(\xi)$-equivariant map
$$
\map(\xi,\gamma)\times T \to E_\infty,\quad \big( (f,\varphi), t \big) \mapsto \varphi(t),
$$
and the dashed arrow exists by the universal property of the pullback.
This shows that the total bundle of the universal $\xi$-fibration is equivalent to the bundle over $B\big( \map(X,BG)_{f_\xi},\aut_\xi(X),X \big)$ classified by $ev$.
\end{proof}

\subsection{\texorpdfstring{$\xi$}{\textxi}-fibrations with prescribed holonomy}
Let $\monoid$ be a grouplike topological monoid acting effectively on $X$ by homotopy equivalences. That $\monoid$ acts effectively on $X$ means that $\monoid$ may be viewed as a submonoid of $\aut(X)$. We let $\aut_{\monoid}(\xi) \subseteq \aut(\xi)$ denote the submonoid of bundle maps $(f,\varphi)$ such that $f\in \monoid$. The classifying space $B\aut_{\monoid}(\xi)$ classifies $\xi$-fibrations $(\pi,\zeta)$ such that the holonomy action $\Omega B \to \aut(X)$ of the underlying fibration $X\to E \to B$ factors through $\monoid$. Examples of $\monoid$ that we have in mind are:
\begin{itemize}
\item $\aut_\circ(X)$ self-homotopy equivalences homotopic to the identity,
\item $\aut_+(X)$ orientation preserving self-equivalences (if $X$ is oriented),
\item $\aut_*(X)$ based homotopy equivalences (if $X$ is based),
\item $\aut_A(X)$ self-equivalences that fix a given subset $A\subseteq X$ pointwise.
\end{itemize}
If $\monoid = \aut_\circ(X)$, write $\aut_\circ(\xi)$ for $\aut_{\monoid}(\xi)$. Similarly for $\aut_+(X)$, $\aut_A(X)$, etc. If $A$ is a subspace of the fixed point set $X^{\monoid}$, we let $\aut_{\monoid}^A(\xi) \subseteq \aut_{\monoid}(\xi)$ denote the submonoid of those $(f,\varphi)$ for which $\varphi_x\colon T_x \to T_x$ is the identity for all $x\in A$.

Fix a classifying map $f_\xi\colon X\to BG$ for the bundle $\xi$.

\begin{theorem} \label{thm:decorated}
If the inclusion $A\to X$ is a cofibration, then the classifying space $B\aut_{\monoid}^A(\xi)$ is weakly homotopy equivalent to the homotopy orbit space
$$\map_A(X,BG)_\xi \dquot \monoid_{\xi},$$
where $\map_A(X,BG)$ denotes the space of maps $g\colon X\to BG$ such that $g|_A = (f_\xi)|_A$ and $\monoid_{\xi}$ denotes the stabilizer of the homotopy class of $f_\xi$ with respect to the induced action of $\monoid$ on $[X,BG]$.
\end{theorem}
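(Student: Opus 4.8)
The plan is to run the argument in the proof of Theorem \ref{thm:universal xi-fibration} with the empty decoration replaced by the pair $(\monoid,A)$, the only genuinely new input being a relative version of the covering homotopy property. Fix a bundle map $\widetilde{f}_\xi\colon \xi\to\gamma$ covering a classifying map $f_\xi\colon X\to BG$, chosen so that the underlying square is a pullback, i.e.\ $f_\xi^*\gamma=\xi$ on the nose. Let $\map_A(\xi,\gamma)$ denote the space of bundle maps $\xi\to\gamma$ whose restriction over $A$ equals $\widetilde{f}_\xi|_A$; equivalently, the fibre over $\widetilde{f}_\xi|_A$ of the restriction map $\map(\xi,\gamma)\to\map(\xi|_A,\gamma)$. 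One then forms the commutative square
$$
\xymatrix{
\aut_{\monoid}^A(\xi)\ar[r]\ar[d] & \map_A(\xi,\gamma)\ar[d] \\
\monoid_{\xi}\ar[r] & \map_A(X,BG)_\xi,
}
$$
where the horizontal maps are post-composition with $\widetilde{f}_\xi$ and with $f_\xi$ and the vertical maps record the underlying map of base spaces.

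First I would check that this square is a pullback and that its vertical maps are fibrations. For the pullback property: a point of the pullback lying over $f\in\monoid_{\xi}$ is a bundle map $\xi\to\gamma$ covering $f_\xi\circ f$ that restricts to $\widetilde{f}_\xi|_A$ over $A$; since $f$ fixes $A$ pointwise and $f_\xi^*\gamma=\xi$, this is the same datum as a bundle isomorphism $\xi\cong f^*\xi$ which is the identity over $A$, i.e.\ precisely a $\varphi$ with $(f,\varphi)\in\aut_{\monoid}^A(\xi)$. The vertical maps are fibrations by the covering homotopy property (in the relative form discussed below for the right-hand one), so the square is homotopy cartesian, and Lemma \ref{lemma:homotopy orbits} yields a weak equivalence
$$
\bigl(\map_A(\xi,\gamma)\dquot\aut_{\monoid}^A(\xi)\bigr)_{\widetilde{f}_\xi}\xrightarrow{\ \sim\ }\bigl(\map_A(X,BG)_\xi\dquot\monoid_{\xi}\bigr)_{f_\xi}
$$
(all monoids in sight are grouplike, as $\monoid$ acts by homotopy equivalences). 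The crucial point is that $\map_A(\xi,\gamma)$ is weakly contractible: then its homotopy orbit space is $B\aut_{\monoid}^A(\xi)$, which is connected, so the left-hand side above is $B\aut_{\monoid}^A(\xi)$, while the right-hand side is the asserted homotopy orbit space once one observes it is connected — this last step is the same bookkeeping of path components, and uses the same convention for the subscript $\xi$, as in the proof of Theorem \ref{thm:universal xi-fibration}. To see that $\map_A(\xi,\gamma)$ is weakly contractible, I would argue that the restriction map $\map(\xi,\gamma)\to\map(\xi|_A,\gamma)$ is a fibration; this is the relative covering homotopy property, which follows from the absolute one because, $A\to X$ being a cofibration, $X\times\{0\}\cup A\times I$ is a deformation retract of $X\times I$, so any bundle map defined over $X\times\{0\}\cup A\times I$ extends over $X\times I$. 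Both source and target of this fibration are weakly contractible since $\gamma$ is a universal bundle, hence so is the fibre $\map_A(\xi,\gamma)$.

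I expect the main obstacle to be exactly this relative covering homotopy property: for numerable fibre bundles it is classical, but one wants it formulated cleanly enough to hold in the generality of the paper (and for May's $\FF$-fibrations), and this is the one place where the cofibration hypothesis on $A\to X$ is used. Everything else — the pullback computation, the application of Lemma \ref{lemma:homotopy orbits}, and the component bookkeeping for the bar constructions — is a routine adaptation of the proof of Theorem \ref{thm:universal xi-fibration}.
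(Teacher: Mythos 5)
Your proposal is correct and takes essentially the same route as the paper: the same pullback square relating $\aut_{\monoid}^A(\xi)$ and $\monoid$ to $\map_A(\xi,\gamma)$ and $\map_A(X,BG)$, the same appeal to Lemma \ref{lemma:homotopy orbits}, and the same contractibility argument identifying $\map_A(\xi,\gamma)$ as the fiber of the restriction map between the weakly contractible spaces $\map(\xi,\gamma)$ and $\map(\xi|_A,\gamma)$. The only cosmetic differences are that the paper applies Lemma \ref{lemma:homotopy orbits} to the full square and then passes to components and stabilizers via Lemma \ref{lemma:component}, whereas you restrict to $\monoid_\xi$ and $\map_A(X,BG)_\xi$ from the outset, and that you spell out the relative covering homotopy property (using the cofibration hypothesis) which the paper leaves implicit in asserting the square is homotopy cartesian.
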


\begin{proof}
Lemma \ref{lemma:homotopy orbits} applied to the homotopy cartesian square
$$
\xymatrix{\aut_{\monoid}^A(\xi) \ar[d] \ar[r] & \map_A(\xi,\gamma) \ar[d] \\ \monoid \ar[r] & \map_A(X,BG)}
$$
together with Lemma \ref{lemma:component} yield weak homotopy equivalences
$$\map_A(\xi,\gamma) \dquot \aut_{\monoid}^A(\xi) \xrightarrow{\sim} \left(\map_A(X,BG) \dquot {\monoid} \right)_{f_\xi} \xleftarrow{\sim} \map_A(X,BG)_\xi \dquot {\monoid}_{[\xi]},$$
The space $\map_A(\xi,\gamma)$ is contractible since it is the homotopy fiber of the restriction map $\map(\xi,\gamma) \to \map(\xi|_A,\gamma)$, so the map
$$\map_A(\xi,\gamma) \dquot \aut_{\monoid}^A(\xi) \to * \dquot \aut_{\monoid}^A(\xi)  = B\aut_{\monoid}^A(\xi).$$
is a weak homotopy equivalence.
\end{proof}

Here is an alternative characterization of the universal $\xi$-fibration.

\begin{proposition}
Fix a basepoint $x_0\in X$ and assume that the inclusion $\{x_0\} \to X$ is a cofibration. The underlying fibration of the universal $\xi$-fibration is weakly equivalent to the map
$$B\aut_*(\xi) \to B\aut(\xi),$$
and the total bundle over $B\aut_*(\xi)$ is classified by the map $B\aut_*(\xi) \to BG$ induced by the monoid map
$$\aut_*(\xi) \to  \aut(T_{x_0}) \simeq G$$
that sends $(f,\varphi)$ to the automorphism $\varphi_{x_0}\colon T_{x_0} \to T_{x_0}$.
\end{proposition}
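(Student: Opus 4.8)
The plan is to deduce the proposition by comparing the $\aut_*(\xi)$-models with the standard models for the universal $\xi$-fibration, using Lemma~\ref{lemma:homotopy orbits} in the spirit of the proofs of Theorems~\ref{thm:universal xi-fibration} and~\ref{thm:decorated}. Recall that the underlying fibration of the universal $\xi$-fibration is $\pi^{univ}\colon B(*,\aut(\xi),X)\to B\aut(\xi)$ and that its total bundle is the Borel construction $p_*\colon B(*,\aut(\xi),T)\to B(*,\aut(\xi),X)$. The geometric input I would set up first is the \emph{evaluation fibration}: forgetting the bundle map and evaluating at $x_0$ gives a map $ev_{x_0}\colon \aut(\xi)\to X$, $(f,\varphi)\mapsto f(x_0)$, which is a fibration, being the composite of the fibration $\aut(\xi)\to\aut(X)$ (covering homotopy property) with the fibration $\aut(X)\to X$ (evaluation at $x_0$, a fibration because $\{x_0\}\hookrightarrow X$ is a cofibration); by inspection its fibre over $x_0$ is exactly $\aut_*(\xi)$.

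For the underlying fibration I would then apply Lemma~\ref{lemma:homotopy orbits} to the square
$$
\xymatrix{\aut_*(\xi)\ar[d]\ar[r] & {*}\ar[d] \\ \aut(\xi)\ar[r]^-{ev_{x_0}} & X,}
$$
whose left vertical map is the inclusion of submonoids, whose right vertical map sends the point to $x_0$, and whose top arrow is the constant map. Since $ev_{x_0}$ is a fibration with fibre $\aut_*(\xi)$ the square is homotopy cartesian; $\aut_*(\xi)\hookrightarrow\aut(\xi)$ is a monoid map, and $x_0$ is fixed by the $\aut_*(\xi)$-action on $X$, so the square has exactly the shape required by the lemma. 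Its conclusion is a weak equivalence
$$B\aut_*(\xi)=*\dquot\aut_*(\xi)\xrightarrow{\sim}X\dquot\aut(\xi)=B(*,\aut(\xi),X),$$
induced by the two vertical maps; since $*\to X\to *$ is the identity this equivalence lies over $B\aut(\xi)$ and carries $B\aut_*(\xi)\to B\aut(\xi)$ to $\pi^{univ}$.

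To identify the total bundle I would transport $p_*$ along the equivalence just constructed. Restricting the $\aut(\xi)$-action on the bundle $\xi\colon T\to X$ along the fibre inclusion $\aut_*(\xi)\hookrightarrow\aut(\xi)$ of $ev_{x_0}$ gives an action of $\aut_*(\xi)$ on the fibre $T_{x_0}\subseteq T$, and by the definition of the monoid multiplication this action is precisely the one through $\aut_*(\xi)\to\aut(T_{x_0})\simeq G$, $(f,\varphi)\mapsto\varphi_{x_0}$. A computation in the bar construction should then show that the pullback of $p_*$ along $B\aut_*(\xi)\to B(*,\aut(\xi),X)$ is the Borel construction $B(*,\aut_*(\xi),T_{x_0})\to B\aut_*(\xi)$; as $T_{x_0}$ is a fibre of the universal bundle $\gamma$ with $\aut(T_{x_0})\simeq G$, this is the bundle classified by the map $B\aut_*(\xi)\to BG$ induced by $\aut_*(\xi)\to G$, which is the claim. (Alternatively one can run the $\map(X,BG)$-models of Theorem~\ref{thm:universal xi-fibration} through the same comparison, or use the identification $X\simeq\aut(\xi)\dquot\aut_*(\xi)$ together with the Fubini principle $B\big(Y,\aut(\xi),B(\aut(\xi),\aut_*(\xi),*)\big)\simeq B(Y,\aut_*(\xi),*)$.)

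I expect the main obstacle to be this last step: matching the left-versus-right action conventions in the bar constructions and checking that all comparison maps commute over $B\aut(\xi)$ is routine, but one must genuinely verify that pulling $\zeta^{univ}$ back along the Lemma~\ref{lemma:homotopy orbits} equivalence produces the Borel construction on $T_{x_0}$ and not merely a bundle agreeing with it on a single fibre.
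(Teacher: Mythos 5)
Your proof is correct, and for the first claim it is essentially the paper's own argument: the paper likewise deduces the equivalence $B\aut_*(\xi)\to B\big(\aut(\xi),X\big)$ over $B\aut(\xi)$ from the evaluation fibration $\aut_*(\xi)\to\aut(\xi)\to X$, and your square plus Lemma \ref{lemma:homotopy orbits} is just a packaged form of that (note only that the lemma is stated for right actions, so you are invoking its mirror image for the left action of $\aut(\xi)$ on $X$, which holds by the same proof). For the second claim you take a genuinely different route. The paper never pulls back the Borel model $B\big(*,\aut(\xi),T\big)$; it reuses the fact from the proof of Theorem \ref{thm:universal xi-fibration} that the total bundle over $B\big(\map(\xi,\gamma),\aut(\xi),X\big)$ is classified by evaluation, and interpolates the space $B\big(\map(\xi,\gamma),\aut_*(\xi),*\big)$, which maps by weak equivalences (contractibility of $\map(\xi,\gamma)$ and of $\map(\xi|_{x_0},\gamma)$) to that model and to $B\aut_*(\xi)$, and by restriction to $B\big(\map(\xi|_{x_0},\gamma),\aut(\xi|_{x_0}),*\big)\simeq BG$; commutativity of the resulting diagram identifies the classifying map with $B\aut_*(\xi)\to B\aut(\xi|_{x_0})\simeq BG$. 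Your alternative works, and the step you flag as the main obstacle does go through: levelwise the pullback of $\aut(\xi)^n\times T\to\aut(\xi)^n\times X$ along $(g_1,\ldots,g_n)\mapsto(g_1,\ldots,g_n,x_0)$ is $\aut_*(\xi)^n\times T_{x_0}$, the last face map restricts because based bundle maps carry $T_{x_0}$ to itself, and geometric realization preserves pullbacks, so the pulled-back bundle is exactly $B\big(*,\aut_*(\xi),T_{x_0}\big)\to B\aut_*(\xi)$ and not merely a bundle agreeing with it on a fibre. What your route then consumes is the classification fact that a Borel construction for an action through $\aut(T_{x_0})\simeq G$ is classified by $B$ of the monoid map (May's theory again, via universality of $B\big(*,\aut(T_{x_0}),T_{x_0}\big)\to B\aut(T_{x_0})$), whereas the paper extracts the corresponding input from contractibility of the mapping spaces already used for Theorem \ref{thm:universal xi-fibration}; in exchange your argument is more hands-on and yields an explicit model for the total bundle over $B\aut_*(\xi)$, while the paper's zig-zag stays entirely inside the framework it has already set up.
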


\begin{proof}
By using the evaluation fibration, $\aut_*(\xi) \to \aut(\xi) \to X$, one shows that the homotopy fiber of $B\aut_*(\xi) \to B\aut(\xi)$ is weakly equivalent to $X$, and this is used to show that the map $B\aut_*(\xi) \to B\big(\aut(\xi),X\big)$ is a weak equivalence over $B\aut(\xi)$.
This proves the first claim. For the second claim, the diagram
$$\xymatrix{
B\big(\map(\xi,\univ),\aut(\xi),X\big) \ar[r]^-{ev} & BG \\
B\big(\map(\xi,\univ),\aut_*(\xi),*\big) \ar[u]_-\sim \ar[r] \ar[d]^-\sim & B\big(\map(\xi|_{x_0},\univ),\aut(\xi|_{x_0}),*\big) \ar[d]^-\sim \ar[u]_-\sim \\
B\aut_*(\xi) \ar[r] & B\aut(\xi|_{x_0})}$$
shows that the map $ev$, which classifies the total bundle, is weakly equivalent to the map $B\aut_*(\xi) \to BG$.
\end{proof}

\subsection{\texorpdfstring{$\xi$}{\textxi}-fibrations from manifold bundles}
Let $M$ be a smooth compact manifold of dimension $m$ and let $\tau_M = (p\colon TM \to M)$ denote its tangent bundle.
Consider a smooth $M$-bundle,
$$M\to E\xrightarrow{\pi} B,$$
i.e., a fiber bundle with fiber $M$ and structure group $\Diff(M)$.

Recall that the \emph{fiberwise tangent bundle}, or \emph{vertical tangent bundle}, $T_\pi E$ is a vector bundle over $E$ that may be defined as follows.
If $E$ and $B$ are smooth compact manifolds and $\pi$ is a surjective submersion, then $T_\pi E$ can be defined as the kernel of the differential $d\pi \colon TE \to \pi^*(TB)$. More generally, $T_\pi E$ may be defined as the vector bundle over $E$ with projection
$$P \times_{\Diff(M)} TM \to P \times_{\Diff(M)} M = E,$$
where $P\to B$ is the principal $\Diff(M)$-bundle associated to $\pi$. Thus, every smooth $M$-bundle has an underlying $\tau_M$-fibration with fibration $\pi$ and total bundle $T_\pi E$.

There is an evident map of monoids $d\colon \Diff(M) \to \aut(\tau_M)$ that sends a diffeomorphism $f\colon M\to M$ to its differential $df\colon \tau_M\to \tau_M$.

\begin{proposition} \label{prop:smooth bundles}
The $\tau_M$-fibration underlying the universal smooth $M$-bundle is classified by the map 
$B\Diff(M) \to B\aut(\tau_M)$ induced by the differential.
\end{proposition}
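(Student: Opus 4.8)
The plan is to exhibit the $\tau_M$-fibration underlying the universal smooth $M$-bundle as the pullback of the universal $\tau_M$-fibration along $Bd$, where $d\colon\Diff(M)\to\aut(\tau_M)$ is the differential. Since a $\tau_M$-fibration determines its classifying map up to homotopy, this is equivalent to the assertion of the proposition. Throughout, write $p\colon TM\to M$ for the bundle $\tau_M$; in the general notation for a bundle $\xi$ with projection $p\colon T\to X$ this means $X = M$ and $T = TM$. Observe that $\Diff(M)$ acts on $TM$ (through differentials) and on $M$ (through diffeomorphisms) compatibly with the action of $\aut(\tau_M)$ via $d$ and the forgetful map $\aut(\tau_M)\to\aut(M)$, so that the identity maps $TM\to T$ and $M\to X$ are $d$-equivariant.

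First I would apply the geometric bar construction functorially to $d$ and to these equivariant identity maps, obtaining a commutative diagram
$$\xymatrix{
B\big(*,\Diff(M),TM\big) \ar[r] \ar[d] & B\big(*,\aut(\tau_M),T\big) \ar[d] \\
B\big(*,\Diff(M),M\big) \ar[r] \ar[d] & B\big(*,\aut(\tau_M),X\big) \ar[d] \\
B\Diff(M) \ar[r]^-{Bd} & B\aut(\tau_M)
}$$
By Theorem 7.6 and Proposition 7.8 of \cite{May}, the vertical maps of the lower square are fibrations with fiber $M = X$, while the vertical maps of the upper square are vector bundles with fiber $\RR^m$; in each square the horizontal maps restrict to the identity on fibers. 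Consequently both squares are homotopy cartesian, and in the upper square the horizontal maps form a map of vector bundles that is fiberwise a linear isomorphism, so the left-hand vertical is the pullback of the right-hand vertical as a vector bundle. Recalling the bar-construction model of the universal $\xi$-fibration---the underlying fibration $\pi^{univ}$ being $B(*,\aut(\xi),X)\to B\aut(\xi)$ and the total bundle $\zeta^{univ}$ being $p_*\colon B(*,\aut(\xi),T)\to B(*,\aut(\xi),X)$---it follows that the pullback of the universal $\tau_M$-fibration along $Bd$ consists of the fibration $B(*,\Diff(M),M)\to B\Diff(M)$ together with the total bundle $B(*,\Diff(M),TM)\to B(*,\Diff(M),M)$.

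It then remains to recognize this pair as the $\tau_M$-fibration underlying the universal smooth $M$-bundle. The space $B(*,\Diff(M),M)$ is a standard bar-construction model for the Borel construction $E\Diff(M)\times_{\Diff(M)}M$, so $B(*,\Diff(M),M)\to B\Diff(M)$ is (equivalent to) the universal smooth $M$-bundle $M\to E\xrightarrow{\pi}B\Diff(M)$; likewise $B(*,\Diff(M),TM)$ models $E\Diff(M)\times_{\Diff(M)}TM$, which is exactly the description of the fiberwise tangent bundle $T_\pi E$ recalled above, with the principal $\Diff(M)$-bundle $P$ taken to be $E\Diff(M)$. I expect the main (modest) obstacle to be bookkeeping rather than substance: one must check that this bar-construction model of $T_\pi E$ agrees, as a vector bundle over $E$, with the definition via an arbitrary principal $\Diff(M)$-bundle (naturality of the Borel construction), and verify the mild point-set hypotheses on the monoids needed to invoke the cited results of \cite{May}. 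The conceptual input---that the total bundle of the universal $\xi$-fibration is pulled back from $B\aut(\xi)$---is already available from the model above and from Theorem \ref{thm:universal xi-fibration}.
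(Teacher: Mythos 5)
Your argument is correct, and it reaches the paper's conclusion by a more formal route. The paper models the universal principal $\Diff(M)$-bundle by $\Emb(M,\RR^\infty)$ and uses the generalized Gauss map to produce a $\Diff(M)$-equivariant map $\Emb(M,\RR^{m+k})\to\map\big(\tau_M,\gamma^m(\RR^{m+k})\big)$ whose target carries the $\aut(\tau_M)$-action through $d$; applying the two-sided bar construction to this map (together with $d$ and the identities on $TM$ and $M$) gives the comparison between the fiberwise tangent bundle of the universal $M$-bundle and the total bundle of the universal $\tau_M$-fibration. You instead put a point in the first bar-construction slot on both sides and invoke plain naturality of $B(*,-,-)$ in the monoid map $d$, which is legitimate because the paper's stated model for the universal $\tau_M$-fibration is precisely $B\big(*,\aut(\tau_M),M\big)\to B\aut(\tau_M)$ with total bundle $B\big(*,\aut(\tau_M),TM\big)$. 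Both arguments ultimately rest on the same input, namely the Borel-construction description $T_\pi E = P\times_{\Diff(M)}TM$, which the paper adopts as its general definition of the fiberwise tangent bundle; your version buys brevity and avoids embedding spaces entirely, while the Gauss-map construction keeps visible the link with the classical differential-geometric definition of $T_\pi E$ (the kernel of $d\pi$ for a submersion) and supplies an explicit point-set comparison map. The checks you defer---that $B\big(*,\Diff(M),TM\big)\to B\big(*,\Diff(M),M\big)$ is indeed a vector bundle and that the two squares are homotopy cartesian with fiberwise identifications as claimed---are of the same nature as those the paper leaves implicit, so nothing essential is missing.
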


\begin{proof}
The space of embeddings $\Emb(M,\RR^\infty)$ is contractible and carries a free right action of $\Diff(M)$, so
$$\Emb(M,\RR^\infty) \to \Emb(M,\RR^\infty)/\Diff(M)$$
may be taken as a model for the universal principal $\Diff(M)$-bundle.
Hence, a model for the universal $M$-bundle is
$$M \to \Emb(M,\RR^\infty) \times_{\Diff(M)} M \to \Emb(M,\RR^\infty)/\Diff(M).$$
Given an embedding $f\colon M\to \RR^{m+k}$ and a point $x\in M$, the image $df_x(T_x M)$ is an $m$-dimensional linear subspace of $\RR^{m+k}$.
This defines a map into the Grassmannian, the generalized Gauss map,
$$
M \to \Gr_{m}(\RR^{m+k}).
$$
It is covered by a bundle map $\tau_M \to \gamma^m(\RR^{m+k})$, into the canonical $m$-dimensional vector bundle over $\Gr_{m}(\RR^{m+k})$, cf.~\cite[p.60--61]{MS}.
Varying the embedding, the Gauss maps give rise to a map
$$G\colon \Emb(M,\RR^{m+k}) \to \map\big(\tau_M,\gamma^m(\RR^{m+k})\big).$$
This map is $\Diff(M)$-equivariant and the action on the target factors through the differential $d\colon \Diff(M)\to \aut(\tau_M)$.
Letting $k\to \infty$, we obtain a bundle map
$$
\xymatrix{ B\big(\Emb(M,\RR^\infty),\Diff(M),TM \big) \ar[d]  \ar[rr]^{B(G,d,1)} && B\big(\map(\tau_M,\gamma^m),\aut(\tau_M), TM \big) \ar[d] \\
 B\big(\Emb(M,\RR^\infty),\Diff(M),M \big) \ar[rr]^{B(G,d,1)} && B\big(\map(\tau_M,\gamma^m),\aut(\tau_M), M \big),
}
$$
We recognize the left vertical map as a model for the fiberwise tangent bundle of the universal $M$-bundle and the right vertical map as a model for the total bundle of the universal $\tau_M$-fibration. This shows that the fiberwise tangent bundle of the universal $M$-bundle is pulled back from the total bundle of the universal $\tau_M$-fibration.
\end{proof}

\begin{remark}
In the classifying space interpretation, the map
$$\big[B,B\Diff(M)\big] \to \big[B,B\aut(\tau_M)\big]$$
may be identified with the forgetful map that sends the equivalence class of a smooth $M$-bundle over $B$ to the equivalence class of its underlying $\tau_M$-fibration.
\end{remark}

\subsection{Tautological classes for \texorpdfstring{$\xi$}{\textxi}-fibrations}
Let $X$ be a Poincar\'e duality space of formal dimension $m$ equipped with an orientation and consider a fibration
$$X\to E \xrightarrow{\pi} B,$$
such that $\pi_1(B)$ acts on $X$ by orientation preserving homotopy equivalences. Recall that the pushforward map, or integration along the fiber, $\pi_!\colon H^{k+m}(E)\to H^{k}(B)$ may be defined as the composite
$$H^{k+m}(E) \to E_\infty^{k,m} \to E_2^{k,m} = H^k(B,H^m(X)) \to H^k(B),$$
where the first two maps arise from the fact that the Serre spectral sequence of the fibration satisfies $E_2^{k,\ell} = 0$ for $\ell>m$, and the last map comes from the orientation. See e.g.~\cite{Grigoriev} and the references therein for a further discussion.

For a bundle $\xi$ over an oriented Poincar\'e duality space $X$, recall that $\aut_+(\xi)$ denotes the topological monoid of bundle maps from $\xi$ to itself that cover an orientation preserving homotopy equivalence of $X$.

\begin{remark}
If $M$ is an oriented manifold and if we fix our theory of bundles to be that of oriented vector bundles (i.e., we fix the fiber and structure group to be $F=\RR^m$ and $G=SO(m)$), then $\aut_+(\tau_M) = \aut(\tau_M)$, because a homotopy equivalence of $M$ that is covered by an orientation preserving bundle map of $\tau_M$ is automatically orientation preserving.
\end{remark}

\begin{definition}
Let $\xi$ be a bundle with structure group $G$ over an oriented Poincar\'e duality space $X$ of formal dimension $m$ and let
$$\pi\colon E\to B,\quad \zeta \to E,$$
be a $\xi$-fibration such that $\pi_1(B)$ acts on $X$ by orientation preserving homotopy equivalences.
For a cohomology class $c\in H^{k+m}(BG)$, we define
$$\kappa_c(\pi,\zeta) = \pi_!\big(c(\zeta)\big) \in H^{k}(B).$$
We will often denote $\kappa_c(\pi,\zeta)$ simply by $\kappa_c$ when there is no risk of confusion.
\end{definition}

The following is immediate from the definitions and Proposition \ref{prop:smooth bundles}, but it is a key observation and we record it as a theorem for reference.

\begin{theorem}
Let $M$ be a closed oriented smooth manifold of dimension $m$ and let $\vartheta$ be a smooth oriented $M$-bundle, i.e., a fiber bundle $\pi\colon E\to B$ with fiber $M$ and structure group $\Diff^+(M)$.

The generalized Miller-Morita-Mumford classes of $\vartheta$ agree with those of the underlying $\tau_M$-fibration, i.e.,
$$\kappa_c(\vartheta) = \kappa_c\big(\pi,T_\pi E\big) \in H^*(B),$$
for all $c\in H^{*+m}(BSO(m))$.

In particular, the universal classes $\kappa_c\in H^*(B\Diff^+(M))$ lift to $H^*(B\aut(\tau_M))$ under the map
$$H^*(B\aut(\tau_M))  \to H^*(B\Diff^+(M))$$
induced by the differential $\Diff^+(M) \to \aut(\tau_M)$.
\hfill $\square$
\end{theorem}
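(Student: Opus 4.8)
The plan is to unwind the definitions and then invoke naturality; the statement is recorded as ``immediate'' precisely because there is nothing beyond bookkeeping to do, so I will only indicate the steps and flag the one point that deserves a word of justification.

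For the first assertion I would start from the description given above of the $\tau_M$-fibration underlying a smooth oriented $M$-bundle $\vartheta = (\pi\colon E\to B)$: by construction its underlying fibration is $\pi$ itself and its total bundle is the fiberwise tangent bundle $T_\pi E$. Since $M$ is oriented and the structure group is $\Diff^+(M)$, the monodromy of $\pi$ acts on $M$ by orientation-preserving homotopy equivalences, so $\pi_1(B)$ acts trivially on $H^m(M;\QQ)$ and the hypotheses under which $\kappa_c(\pi,T_\pi E) = \pi_!\big(c(T_\pi E)\big)$ is defined are satisfied. The one thing that genuinely needs checking is that the fiber-integration map $\int_M$ used in the classical definition of the generalized Miller--Morita--Mumford classes --- defined, say, by fiberwise de Rham integration, or via the Thom class of the vertical tangent bundle --- coincides with the pushforward $\pi_!$ defined here via the Serre spectral sequence and the orientation. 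This is a standard compatibility of umkehr maps, and with it in hand one simply reads off $\kappa_c(\vartheta) = \int_M c(T_\pi E) = \pi_!\big(c(T_\pi E)\big) = \kappa_c(\pi,T_\pi E)$.

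For the ``in particular'' clause I would apply this to the universal smooth oriented $M$-bundle over $B\Diff^+(M)$. By Proposition \ref{prop:smooth bundles}, applied with $\Diff^+(M)$ in place of $\Diff(M)$ and using that $\aut_+(\tau_M) = \aut(\tau_M)$ in the category of oriented vector bundles, the underlying $\tau_M$-fibration of this universal bundle is the pullback of the universal $\tau_M$-fibration along the map $B\Diff^+(M)\to B\aut(\tau_M)$ induced by the differential. Now $\kappa_c\in H^*(B\aut(\tau_M);\QQ)$ is by definition $\kappa_c$ of the universal $\tau_M$-fibration, and fiber integration commutes with pullback of fibrations, so the image of $\kappa_c$ under $H^*(B\aut(\tau_M);\QQ)\to H^*(B\Diff^+(M);\QQ)$ is $\kappa_c$ of the pulled-back $\tau_M$-fibration; by the first assertion this is the universal class $\kappa_c\in H^*(B\Diff^+(M);\QQ)$, which is therefore a lift. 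The only steps asking for any care --- and the nearest thing here to an obstacle --- are this identification of the two a priori different fiber-integration maps and the naturality of fiber integration under pullback, both of which are routine and introduce no new idea.
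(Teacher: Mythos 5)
Your proposal is correct and follows exactly the route the paper intends: the paper records this theorem with no written proof beyond the remark that it is ``immediate from the definitions and Proposition~2.12'', and your unwinding of the definitions, the compatibility of the classical fiber integration with the Serre spectral sequence pushforward, and the use of Proposition~2.12 plus naturality for the ``in particular'' clause is precisely that argument made explicit. Nothing is missing.
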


The tautological ring $R^*(M)$, in the sense of \cite{GGRW}, can be defined as the subring of the cohomology ring of $B\Diff^+(M)$ generated by the $\kappa$-classes. It is clear how to define an analog of the tautological ring for bundles.

\begin{definition}
For a bundle $\xi$ with structure group $G$ over an oriented Poincar\'e duality space $X$, we define $R^*(\xi)$ to be the subring of the cohomology ring of $B\aut_+(\xi)$ generated by the classes $\kappa_c$, for all $c\in H^*(BG)$.
\end{definition}

\begin{corollary}
For every closed oriented smooth manifold $M$, the differential gives rise to a surjective ring homomorphism
$R^*(\tau_M) \to R^*(M).$
\end{corollary}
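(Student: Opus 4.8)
The statement to prove is that for every closed oriented smooth manifold $M$, the differential induces a surjective ring homomorphism $R^*(\tau_M) \to R^*(M)$.

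The plan is straightforward given everything set up before. First I would recall that by Proposition \ref{prop:smooth bundles} (and the ``key observation'' theorem just preceding), the differential $d\colon \Diff^+(M) \to \aut(\tau_M)$ induces a map of classifying spaces $Bd\colon B\Diff^+(M) \to B\aut(\tau_M)$, and that this map classifies the underlying $\tau_M$-fibration of the universal smooth oriented $M$-bundle. Note that since $M$ is oriented, $\aut_+(\tau_M) = \aut(\tau_M)$ by the remark preceding the definition of the $\kappa$-classes, so $R^*(\tau_M)$ is a subring of $H^*(B\aut(\tau_M);\QQ)$ and the ring homomorphism in question is the restriction of $(Bd)^*\colon H^*(B\aut(\tau_M);\QQ) \to H^*(B\Diff^+(M);\QQ)$.

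The key step is the naturality of the $\kappa$-classes under this map, which is exactly the content of the theorem immediately preceding (the ``key observation''): for every $c \in H^{*+m}(BSO(m))$ we have $(Bd)^*(\kappa_c) = \kappa_c$, where the left $\kappa_c \in H^*(B\aut(\tau_M))$ and the right $\kappa_c \in H^*(B\Diff^+(M))$. Indeed, this follows because $(Bd)^*(\kappa_c)$ equals $\kappa_c$ evaluated on the pullback of the universal $\tau_M$-fibration, which by Proposition \ref{prop:smooth bundles} is precisely the $\tau_M$-fibration underlying the universal smooth oriented $M$-bundle, and $\kappa$-classes are natural under pullback of $\xi$-fibrations by construction ($\kappa_c(\pi,\zeta) = \pi_!(c(\zeta))$ is defined via integration along the fiber, which is natural).

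From here the conclusion is formal. Since $R^*(\tau_M)$ is generated as a ring by the classes $\kappa_c$ for $c \in H^*(BSO(m))$, and $(Bd)^*$ is a ring homomorphism sending each such generator $\kappa_c$ to the corresponding generator $\kappa_c$ of $R^*(M)$, the image $(Bd)^*(R^*(\tau_M))$ is the subring of $H^*(B\Diff^+(M);\QQ)$ generated by all the $\kappa_c$, which is by definition $R^*(M)$. Restricting the codomain, we obtain a surjective ring homomorphism $R^*(\tau_M) \twoheadrightarrow R^*(M)$. There is no real obstacle here; the only thing to be slightly careful about is that the $\kappa$-classes generating $R^*(\tau_M)$ and those generating $R^*(M)$ are indexed by the same set $H^*(BSO(m))$ and correspond under $(Bd)^*$, which is precisely the preceding theorem. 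I would present this as a short paragraph-length proof, citing Proposition \ref{prop:smooth bundles} and the preceding theorem, with perhaps one line spelling out that ring generators map to ring generators.

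\begin{proof}
Since $M$ is oriented we have $\aut_+(\tau_M) = \aut(\tau_M)$, so $R^*(\tau_M)$ is the subring of $H^*(B\aut(\tau_M);\QQ)$ generated by the classes $\kappa_c$, $c\in H^*(BSO(m))$, and $R^*(M)$ is the subring of $H^*(B\Diff^+(M);\QQ)$ generated by the same collection of classes. By the preceding theorem, the ring homomorphism
$$(Bd)^*\colon H^*(B\aut(\tau_M);\QQ) \to H^*(B\Diff^+(M);\QQ)$$
induced by the differential sends $\kappa_c$ to $\kappa_c$ for every $c\in H^{*}(BSO(m))$. Hence $(Bd)^*$ carries the ring generators of $R^*(\tau_M)$ to the ring generators of $R^*(M)$, so it restricts to a ring homomorphism $R^*(\tau_M) \to R^*(M)$ whose image is all of $R^*(M)$. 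This proves surjectivity.
\end{proof}
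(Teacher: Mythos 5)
Your proof is correct and follows exactly the route the paper intends: the corollary is stated without proof because it is immediate from the preceding theorem, which says $(Bd)^*$ sends each generator $\kappa_c$ of $R^*(\tau_M)$ to the corresponding generator $\kappa_c$ of $R^*(M)$. Your write-up simply makes explicit the one-line argument the paper leaves to the reader.
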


\section{Rational homotopy theory of \texorpdfstring{$\xi$}{\textxi}-fibrations}
Let $\xi$ be a bundle over a simply connected finite CW-complex $X$ with structure group $G$ and let $\monoid$ be a connected topological monoid acting effectively on $X$ by homotopy equivalences. The aim of this section is to construct a relative Sullivan model, in the sense of rational homotopy theory (see e.g.~\cite[\S14]{FHT-RHT}), for the underlying fibration of the universal $\xi$-fibration with holonomy in $\monoid$,
$$X \to B\big(\aut_{\monoid}(\xi),X \big) \to B\aut_{\monoid}(\xi),$$
as well as a model for the classifying map $B\big(\aut_{\monoid}(\xi),X \big) \to BG$ of the total bundle.

\subsection{The Chevalley-Eilenberg cochain complex} \label{sec:CE}
Let $L$ be a differential graded Lie algebra over $\QQ$ with differential $\delta$.
For $n\in \ZZ$, we define $L\langle n \rangle$ by
$$
L\langle n \rangle_i = \left\{ \begin{array}{ll} L_i, & i>n, \\ \ker(L_n \xrightarrow{\delta} L_{n-1}), & i = n, \\ 0, & i<n. \end{array} \right.
$$
Recall that a Maurer-Cartan element is an element $\tau\in L_{-1}$ such that
$$\delta(\tau) + \frac{1}{2}[\tau,\tau] = 0.$$
If $\tau$ is a Maurer-Cartan element, then one can form the twisted dg Lie algebra $L^\tau$. It has the same underlying graded Lie algebra as $L$ but the differential is $\delta + [\tau,-]$.

The Chevalley-Eilenberg complex is the differential graded coalgebra
$$C_*(L) = \big( \Lambda sL,d = d_0 + d_1 \big),$$
where the differential is characterized by
\begin{align*}
d_0(sx) & = - s \delta(x), \\
d_1(sx \wedge sy) & = (-1)^{|x|} s[x,y].
\end{align*}
By definition, the Chevalley-Eilenberg cochain complex is the dual differential graded algebra $C^*(L) = C_*(L)^\vee$.

If $M$ is a differential graded left $L$-module, the Chevalley-Eilenberg complex with coefficients in $M$ is defined by
$$C^*(L,M) = \Hom(C_*(L),M).$$
The differential is the sum $\partial + t$, where
$$\partial(f) = d_M \circ f - (-1)^{|f|} f \circ d_{C_*(L)},$$
$$t(f) = \tau_L \cdot f.$$
Here $\tau_L \cdot f$ denotes the action of the universal twisting function $\tau_L \in \Hom(C_*(L),L)$ on $f\in \Hom(C_*(L),M)$. Explicitly,
$$t(f)(sx_1 \wedge \cdots \wedge sx_n) = \sum_{i=1}^n (-1)^{\epsilon_i} x_i \cdot f(sx_1\wedge \cdots \wedge \widehat{sx_i} \wedge \cdots \wedge sx_n),$$
$$\epsilon_i = |sx_i|(|f| + |sx_1| +\cdots + |sx_{i-1}|).$$

We call a cochain $f\in C^*(L,M)$ an \emph{$n$-cochain} if $f(sx_1\wedge \cdots \wedge sx_k) = 0$ unless $k=n$. Elements of $M$ may be identified with $0$-cochains. If $\alpha_i$ is a graded  vector space basis for $L$, then the \emph{dual $1$-cochains} $x_i\in C^*(L)$ are characterized by
$$x_i(s\alpha_j) = \delta_{ij}.$$
\begin{remark}
Our sign convention agrees with that of \cite{Tanre}, but differs from that of \cite{FHT-RHT}.
The signs are dictated by wanting the $1$-cochain $\tau_L\in C_*(L,L)$ defined by $\tau_L(sx) = x$ to be a twisting function in the sense of \cite{Quillen}.
\end{remark}

\subsection{Characteristic cochains of bundles}
Let $\xi$ be a bundle with structure group $G$ over $X$, classified by a map $f_\xi\colon X\to BG$.
Let $A$ be cdga model for $X$ and let $\Pi$ be a Lie model for $BG$ in the sense that $A$ and $C^*(\Pi)$ are quasi-isomorphic to $A_{PL}^*(X)$ and $A_{PL}^*(BG)$, respectively, as cdgas.
The rational homotopy class of $f_\xi$ is recorded by either
\begin{itemize}
\item the homotopy class of a morphism of cdgas
$$\varphi_\xi \colon C^*(\Pi) \to A,\quad \mbox{or}$$
\item the gauge equivalence class of a Maurer-Cartan element
$$\tau(\xi) \in A \ctensor \Pi,$$
in a certain completed tensor product.
\end{itemize}
For the latter, see e.g.~\cite[Theorem 1.5]{Berglund}. For $A$ of finite type, the completed tensor product can be taken to be $(A \ctensor \Pi)_n = \prod_{i} A^i \tensor \Pi_{i+n}$. If $A$ or $\Pi$ is finite dimensional, then $A\ctensor \Pi \cong A\tensor \Pi$. Concretely, if we fix a basis $\{q_i\}$ for $\Pi$ and we let $p_i\in C^*(\Pi)$ denote the dual $1$-cochains, then $\varphi_\xi$ and $\tau(\xi)$ are determined by certain cochains $p_i(\xi) \in A$, namely
\begin{equation*}
\varphi_\xi(p_i)  = p_i(\xi), \quad \tau(\xi) = \sum_i p_i(\xi) \tensor q_i.
\end{equation*}
We will refer to $p_i(\xi)$ as \emph{characteristic cochains} of the bundle $\xi$. The characteristic cochains are not unique, but the equivalence classes of $\varphi_\xi$ or $\tau(\xi)$ are.

In the special case when $G$ is connected and $H^*(BG;\QQ)$ is a free graded commutative algebra (e.g., if $G$ is a connected compact Lie group), the rational homotopy groups $\Pi = \pi_*(G)\tensor \QQ$, with trivial differential and Lie bracket, is a dg Lie algebra model for $BG$, and the cochains $p_i$ are cocycles. In this case, the homotopy class of $\varphi_\xi$, or the gauge equivalence class of $\tau(\xi)$, determines and is determined by the cohomology classes of the cocycles $p_i(\xi) \in A$.

\subsection{Lie models for monoid actions}
Let $\hl$ be a positively graded dg Lie algebra. Following \cite{Berglund-fibrations} we associate a simplicial group $\exp_\bullet(\hl)$ to $\hl$ as follows. In simplicial degree $n$ it is the nilpotent group associated to the nilpotent Lie algebra $Z_0(\Omega_n \tensor \hl)$ of $0$-cycles in the dg Lie algebra $\Omega_n \tensor \hl$, where $\Omega_n = A_{PL}^*(\Delta^n)$ is the cdga of polynomial differential forms on the standard $n$-simplex.

\begin{remark}
If $\hl$ is of finite type, then the simplicial group $\exp_\bullet(\hl)$ is isomorphic to the simplicial realization of the dg commutative Hopf algebra $\Gamma \hl = U \hl^\vee$, studied in \cite[\S25]{FHT-RHT}, see \cite[Proposition 3.8]{Berglund-fibrations}.
\end{remark}

Next, if $\hl$ acts on a cdga $\Lambda$ by derivations, then the simplicial group $\exp_\bullet(\hl)$ acts on the simplicial set $\langle \Lambda \rangle = \Hom_{cdga}(\Lambda,\Omega_\bullet)$. Indeed, the Lie algebra $Z_0(\Omega_\bullet \tensor \hl)$ acts on $\Omega_\bullet \tensor \Lambda$ by $\Omega_\bullet$-linear chain derivations. In each simplicial degree the action is nilpotent (since $\hl$ is assumed to be positively graded), so induces an action of the group $\exp Z_0(\Omega_\bullet\tensor \hl)$ on $\Omega_\bullet \tensor \Lambda$ by cdga automorphisms, and this induces an action on $\langle \Lambda \rangle \cong \Hom_{cdga(\Omega_\bullet)}(\Omega_\bullet \tensor \Lambda,\Omega_\bullet)$.

\begin{remark}
If $\hl$ is of finite type, then the action can alternatively be constructed as follows. That $\hl$ acts on $\Lambda$ by derivations means that the map
$$\alpha\colon \Lambda \to \Hom(U\hl,\Lambda),\quad \alpha(x)(\gamma) = x\cdot \gamma,$$
is a cdga morphism, where $\Hom(U\hl,\Lambda)$ is given the convolution product. If $\hl$ is of finite type, then  the natural cdga morphism
$$\beta\colon \Gamma \hl \tensor \Lambda \to \Hom(U\hl,\Lambda)$$
is an isomorphism. The composite map
$$\langle \Gamma \hl \rangle \times \langle \Lambda \rangle \cong \langle \Gamma\hl \tensor \Lambda \rangle \xrightarrow{\langle \beta^{-1} \alpha \rangle} \langle \Lambda\rangle$$
defines the group action.
\end{remark}

\begin{definition}
We will say that the action of $\hl$ on $\Lambda$ models the action of a topological monoid $\monoid$ on a space $X$ if $(\monoid,X)$ is rationally equivalent to $(|\exp_\bullet(\hl)|, |\Lambda|)$ in the category of pairs $(\Gmonoid,M)$ of topological monoids $\Gmonoid$ and $\Gmonoid$-spaces $M$, where morphisms $(g,m)\colon (\Gmonoid,M)\to (\Gmonoid',M')$ are pairs where $g\colon \Gmonoid \to \Gmonoid'$ is a map of topological monoids and $m\colon M\to M'$ is a map of $\Gmonoid$-spaces, and where $(g,m)$ is a rational equivalence if both $g$ and $m$ induce isomorphisms in rational homology.
\end{definition}

\begin{remark} \label{rmk:fibration}
Since the action of a grouplike monoid $\Gmonoid$ on a space $X$ can be recovered, up to homotopy, as the holonomy action of $\Omega B\Gmonoid$ on $X$ associated to the fibration $X\dquot \Gmonoid \to B\Gmonoid$, we have that $(\Gmonoid,X)$ is weakly equivalent to $(\monoid,Y)$ if and only if the associated fibrations $X\dquot \Gmonoid \to B\Gmonoid$ and $Y\dquot \monoid \to B\monoid$ are weakly equivalent.
\end{remark}

\begin{proposition}
Suppose that $\Lambda$ is a Sullivan algebra of finite type. If the action of $\hl$ on $\Lambda$ models the action of $\monoid$ on $X$, then the fibration
$$X \dquot \monoid \to B \monoid$$
is modeled by the relative Sullivan algebra
\begin{equation} \label{eq:relative Sullivan model}
C^*(\hl) \to C^*(\hl,\Lambda).
\end{equation}
\end{proposition}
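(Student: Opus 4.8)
The plan is to move the statement to the simplicial Lie side and there recognise $C^*(\hl,\Lambda)$ as a model of a Borel construction. Since the action of $\hl$ on $\Lambda$ models the action of $\monoid$ on $X$, Remark~\ref{rmk:fibration} identifies the fibration $X\dquot\monoid\to B\monoid$, up to rational weak equivalence, with the Borel fibration $\langle\Lambda\rangle\dquot\exp_\bullet(\hl)\to B\exp_\bullet(\hl)$ for the simplicial group $\exp_\bullet(\hl)$ acting on $\langle\Lambda\rangle$ as constructed earlier in this section. It therefore suffices to exhibit the relative cdga $C^*(\hl)\to C^*(\hl,\Lambda)$ as a Sullivan model of this Borel fibration.

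First I would pin down the algebra. Because $\hl$ is positively graded, $C^*(\hl)$ is a Sullivan algebra, and it is a Sullivan model of $B\exp_\bullet(\hl)$ by \cite{Berglund-fibrations}. Viewing $C_*(\hl)=\Lambda s\hl$ as a cofree cocommutative coalgebra gives an isomorphism of graded algebras $C^*(\hl,\Lambda)=\Hom(C_*(\hl),\Lambda)\cong C^*(\hl)\ctensor\Lambda$ (an ordinary tensor product when $\hl$ is finite-dimensional), under which $\partial$ corresponds to $d_{C^*(\hl)}\tensor 1+1\tensor d_\Lambda$ and the twisting term $t$ to a perturbation raising the $C^*(\hl)$-word length by one. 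As $\Lambda$ is Sullivan, this exhibits $C^*(\hl)\to C^*(\hl,\Lambda)$ as a (complete) relative Sullivan algebra. Dividing out the augmentation ideal of $C^*(\hl)$ annihilates $t$, so the fibre of the extension is $(\Lambda,d_\Lambda)$, a Sullivan model of $\langle\Lambda\rangle\simeq X$.

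The main step is to check that the relative Sullivan algebra models the Borel fibration itself, and not merely some fibration with the same base and fibre. I would construct a natural comparison directly: an $n$-simplex of $\langle C^*(\hl,\Lambda)\rangle$ is a cdga morphism $C^*(\hl)\ctensor\Lambda\to\Omega_n$; restricting it along $C^*(\hl)\to C^*(\hl,\Lambda)$ gives an $n$-simplex of $\langle C^*(\hl)\rangle$, which records a point of $B\exp_\bullet(\hl)$ under the identification of \cite{Berglund-fibrations} and the universal twisting function $\tau_\hl$, while the remaining data amounts to a $\tau_\hl$-twisted morphism out of $\Lambda$, i.e.\ a point in the fibre of the associated bundle with fibre $\langle\Lambda\rangle$. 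Assembling this against the simplicial bar construction $B\big(*,\exp_\bullet(\hl),\langle\Lambda\rangle\big)$ produces a map of fibrations from $\langle C^*(\hl,\Lambda)\rangle\to\langle C^*(\hl)\rangle$ to $\langle\Lambda\rangle\dquot\exp_\bullet(\hl)\to B\exp_\bullet(\hl)$ which, by the previous paragraph, is a rational equivalence on the base and on the fibre, hence a rational equivalence of total spaces by comparison of Serre spectral sequences with $\QQ$-coefficients; this yields the claim, using the standard fact (see \cite[\S14]{FHT-RHT}) that a relative Sullivan algebra over a simply connected base is a model of a fibration once its base and fibre are models of the base and fibre and a compatible comparison of realisations is a rational equivalence. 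Alternatively one can compute $A_{PL}$ of the Borel construction as the totalisation of the cosimplicial cdga $[p]\mapsto(\Gamma\hl)^{\tensor p}\tensor\Lambda$, using $A_{PL}(\exp_\bullet(\hl))\simeq\Gamma\hl$, and identify this totalisation with $C^*(\hl,\Lambda)$ through the Koszul-type comparison between Lie algebra cohomology and the bar cohomology of the Hopf algebra $\Gamma\hl$ with coefficients in $\Lambda$.

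The crux is exactly this last identification: matching the twisting term $t$ in the differential of $C^*(\hl,\Lambda)$ with the holonomy of the Borel fibration, and doing so for a general Sullivan algebra $\Lambda$ rather than one of the form $C^*$ of a Lie model. The two facts that make it go through are (i) the compatibility of the equivalence $\langle C^*(\hl)\rangle\simeq B\exp_\bullet(\hl)$ with the group structure of $\exp_\bullet(\hl)$ and with $\tau_\hl$, and (ii) that the action of $\exp_\bullet(\hl)$ on $\langle\Lambda\rangle$ used above is induced precisely by the action of $\hl$ on $\Lambda$ that defines $t$ (through the map $\beta^{-1}\alpha$ in the finite type case); granting these, the comparison reduces to a naturality bookkeeping.
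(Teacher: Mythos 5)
Your opening reduction---use the definition of ``models the action'' and Remark \ref{rmk:fibration} to replace $X\dquot\monoid\to B\monoid$ by the Borel fibration of $\exp_\bullet(\hl)$ acting on $\langle\Lambda\rangle$---is exactly how the paper's proof begins, and your analysis of $C^*(\hl)\to C^*(\hl,\Lambda)$ as a relative Sullivan algebra with fibre $\Lambda$ is fine. The problem is the step you yourself flag as the crux: it is not carried out, and as described it does not quite make sense. There is no evident simplicial map from $\langle C^*(\hl,\Lambda)\rangle$ to the two-sided bar construction $B\big(*,\exp_\bullet(\hl),\langle\Lambda\rangle\big)$ obtained by ``assembling'' your pairs (simplex of $\langle C^*(\hl)\rangle$, twisted morphism out of $\Lambda$): a simplex of the bar construction is a tuple of group elements together with a point of $\langle\Lambda\rangle$, and even the first piece of your data only corresponds to such a tuple through a zig-zag of weak equivalences, not simplexwise; producing that correspondence compatibly with the twisting term $t$ is precisely the comparison you are trying to prove. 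Deferring it to ``naturality bookkeeping'' granted your facts (i) and (ii) therefore leaves a genuine hole, and the subsequent Serre spectral sequence argument has nothing to compare until the map exists.

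The paper closes this hole with a different device that makes both the hand-built map and the spectral sequence unnecessary: by \cite[Theorem 3.9]{Berglund-fibrations} (cf.\ \cite[\S25]{FHT-RHT}) the realization of the universal $U\hl$-coalgebra bundle $U\hl\to C_*(\hl,U\hl)\to C_*(\hl)$ is a universal $\langle U\hl\rangle\cong\exp_\bullet(\hl)$-bundle, so the Borel fibration is rationally equivalent to the associated bundle $\langle C_*(\hl,U\hl)\rangle\times_{\langle U\hl\rangle}\langle\Lambda\rangle\to\langle C_*(\hl)\rangle$; and this associated bundle is literally isomorphic to the realization of the coalgebra morphism $C_*(\hl,\Lambda^\vee)\to C_*(\hl)$, whose dual is \eqref{eq:relative Sullivan model}. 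In other words, the compatibilities you call (i) and (ii) are exactly what the universal coalgebra bundle packages, and invoking it turns your crux into an isomorphism of bundles rather than a fibrewise comparison. If you want to keep the shape of your argument, replace the bar construction by this associated-bundle model; then the rest of your outline goes through with no spectral sequence needed.
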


\begin{proof}
The realization of the universal $U\hl$-coalgebra bundle
$$U \hl \to C_*(\hl,U\hl) \to C_*(\hl)$$
is a universal $\langle U\hl \rangle \cong \exp_\bullet(\hl)$-bundle, cf.~\cite[Theorem 3.9]{Berglund-fibrations} and \cite[\S25]{FHT-RHT}. It follows that the fibration $X\dquot \monoid \to B\monoid$ is rationally equivalent to
$$\langle C_*(\hl,U\hl) \rangle \times_{\langle U\hl \rangle} \langle \Lambda \rangle \to \langle C_*(\hl) \rangle.$$
The latter map isomorphic to the realization of the morphism of dg coalgebras $C_*(\hl,\Lambda^\vee) \to C_*(\hl)$, the dual of which is isomorphic to \eqref{eq:relative Sullivan model}.
\end{proof}

For a simply connected finite CW-complex $X$ with Sullivan model $\Lambda$, a well-known and widely used result is that $\Der \Lambda \langle 1 \rangle$, the positive truncation of the dg Lie algebra of derivations on $\Lambda$, is a Lie model for $B\aut_\circ(X)$. This is sketched in \cite[p.313]{Sullivan}. See also \cite{Lazarev,Prigge,SS,Tanre} for this and related results. We will here give a short proof that shows the slightly stronger statement that the action of $\aut_\circ(X)$ on $X$ is modeled by the action of $\Der \Lambda \langle 1 \rangle$ on $\Lambda$.

\begin{proposition} \label{prop:derivation}
For a simply connected finite CW-complex $X$ with Sullivan model $\Lambda$, the action of  $\Der \Lambda \langle 1 \rangle$ on $\Lambda$ models the action of $\aut_\circ(X)$ on $X$.
\end{proposition}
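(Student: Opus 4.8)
The plan is to reduce, via Remark~\ref{rmk:fibration} and its evident rational analogue, to a comparison of two fibrations, and then to observe that they admit the same relative Sullivan model. Write $\hl = \Der\Lambda\langle 1\rangle$. Since the action of a grouplike monoid on a space is recovered up to homotopy as the holonomy of its Borel fibration, the statement to be proved is equivalent to the assertion that the universal $X$-fibration $X\dquot\aut_\circ(X)\to B\aut_\circ(X)$ is rationally equivalent, as a fibration, to the Borel fibration $\langle\Lambda\rangle\dquot\exp_\bullet(\hl)\to B\exp_\bullet(\hl)$ of the action of $\exp_\bullet(\hl)$ on $\langle\Lambda\rangle$. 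Note that both bases are simply connected: $\aut_\circ(X)$ is connected by definition, and $\exp_\bullet(\hl)$ is connected because $\hl$ lives in degrees $\geq 1$, so $\exp_0(\hl)$ is trivial.

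For the second fibration there is nothing new to prove: exactly as in the proof of the preceding proposition, the realization of the universal $U\hl$-coalgebra bundle $U\hl\to C_*(\hl,U\hl)\to C_*(\hl)$ is a universal $\exp_\bullet(\hl)$-bundle, so $\langle\Lambda\rangle\dquot\exp_\bullet(\hl)\to B\exp_\bullet(\hl)$ is modeled by the relative Sullivan algebra $C^*(\hl)\to C^*(\hl,\Lambda)$, with $B\exp_\bullet(\hl)$ modeled by $C^*(\hl)$ and $\langle\Lambda\rangle$ a rationalization of $X$. For the first fibration I would invoke the classical theory of the universal fibration in rational homotopy theory (Sullivan \cite[p.~313]{Sullivan}; see also \cite{SS,Tanre,Lazarev,Prigge}): not only is $C^*(\Der\Lambda\langle 1\rangle)$ a Sullivan model of $B\aut_\circ(X)$, but the universal $X$-fibration itself is modeled by the relative Sullivan algebra $C^*(\Der\Lambda\langle 1\rangle)\to C^*(\Der\Lambda\langle 1\rangle,\Lambda)$, where the twisting term is precisely the one induced by the module structure of $\Lambda$ over $\Der\Lambda\langle 1\rangle$.

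Granting this, the two fibrations carry the same relative minimal Sullivan model over the same simply connected, finite type base model $C^*(\hl)$. Since a fibration with simply connected base and nilpotent, finite type fiber is determined up to rationalization by such a model, and since the second fibration is already rational (its fiber $\langle\Lambda\rangle$ and base $B\exp_\bullet(\hl)$ are simply connected rational spaces), the second fibration is a model for the rationalization of the first. Concretely, I would first fiberwise-rationalize $X\dquot\aut_\circ(X)\to B\aut_\circ(X)$ and then rationalize the base, obtaining a zigzag of maps of fibrations connecting it to $\langle\Lambda\rangle\dquot\exp_\bullet(\hl)\to B\exp_\bullet(\hl)$ in which every map is a rational homology isomorphism on total spaces and on bases---hence also on fibers---where one uses simple connectivity of $B\aut_\circ(X)$. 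By the rational analogue of Remark~\ref{rmk:fibration} this yields the required rational equivalence of actions.

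The main obstacle is the classical input in the second paragraph. That $C^*(\Der\Lambda\langle 1\rangle)$ models $B\aut_\circ(X)$ is standard, but the refinement that the \emph{total space} of the universal fibration is modeled by $C^*(\Der\Lambda\langle 1\rangle,\Lambda)$ with the derivation twisting---equivalently, that the derivation action of $\Der\Lambda\langle 1\rangle$ on $\Lambda$ corresponds to the tautological action of $\aut_\circ(X)$ on $X$---is the heart of the matter; it rests on the principle that the pair $(\Der\Lambda\langle 1\rangle,\Lambda)$ controls the deformation theory of fibrations with fiber $X$, which is exactly what the cited references establish. Everything else---connectivity of $\exp_\bullet(\hl)$, the rational version of Remark~\ref{rmk:fibration}, and the fact that a relative Sullivan model pins down a fibration over a simply connected base up to rationalization---is routine.
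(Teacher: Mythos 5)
Your reduction via Remark~\ref{rmk:fibration} is fine in spirit, and the part about the Borel fibration of $\exp_\bullet(\hl)$ on $\langle\Lambda\rangle$ being modeled by $C^*(\hl)\to C^*(\hl,\Lambda)$ does follow from the proof of the preceding proposition. The genuine gap is the ``classical input'' you lean on in your second paragraph: the assertion that the literature establishes not merely that $C^*(\Der\Lambda\langle 1\rangle)$ models $B\aut_\circ(X)$, but that the universal $X$-fibration itself is modeled by $C^*(\Der\Lambda\langle 1\rangle)\to C^*(\Der\Lambda\langle 1\rangle,\Lambda)$ with twisting given by the derivation action. That refined statement is essentially equivalent, via the preceding proposition and Remark~\ref{rmk:fibration}, to the proposition you are asked to prove; the paper itself points out that what is ``well-known and widely used'' (Sullivan's sketch, and the related results of Lazarev, Schlessinger--Stasheff, Tanr\'e, Prigge) is the identification of the \emph{base} $B\aut_\circ(X)$, and that the action-level (equivalently, total-space-with-twisting) statement is strictly stronger --- which is precisely why the paper supplies a proof. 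So your argument defers its central step to references that do not contain it in the form you need, and the rest of the argument (same relative Sullivan model $\Rightarrow$ rationally equivalent fibrations $\Rightarrow$ rationally equivalent actions) cannot rescue that.

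By contrast, the paper's proof avoids any appeal to a model for the universal fibration: it compares the monoids directly, producing the map $\exp_\bullet(\hl)\to\aut_\circ\langle\Lambda\rangle$ from the derivation action, and shows it is a weak equivalence by computing both sides' homotopy groups --- $H_k(\hl)\cong\pi_k(\exp_\bullet(\hl))$ on the source, and the Lupton--Smith-style identification $\pi_k(\aut_\circ\langle\Lambda\rangle)\cong H_k(\Der\Lambda)$ via homotopy classes of $\Lambda\to H^*(S^k)\tensor\Lambda$ over $\Lambda$ on the target --- and checking the composite is the identity; the rational equivalence of the pairs $(\aut_\circ(X),X)$ and $(\aut_\circ\langle\Lambda\rangle,\langle\Lambda\rangle)$ is the only external ingredient. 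If you want to salvage your route, you would have to actually prove the refined statement about the universal fibration (including that its twisting is the derivation action), at which point you would in effect be reproving the proposition; a secondary, smaller issue is that your ``rational analogue'' of Remark~\ref{rmk:fibration} and the claim that a relative Sullivan model determines a fibration up to fiberwise rationalization over a simply connected base also need justification, since they belong to the same circle of classification results whose rigorous status is exactly what is in question here.
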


\begin{proof}
The action of $\hl = \Der \Lambda \langle 1 \rangle$ on $\Lambda$ gives rise to an action of the connected group $\exp_\bullet(\hl)$ on $\langle \Lambda \rangle$, which yields map of monoids
\begin{equation} \label{eq:derivations}
\exp_\bullet(\hl)\to \aut_\circ \langle \Lambda \rangle.
\end{equation}
The pairs $(\aut_\circ(X),X)$ and $(\aut_\circ \langle \Lambda \rangle, \langle \Lambda \rangle)$ are easily seen to be rationally equivalent (see, e.g., \cite[p.6]{Berglund-fibrations}), so we are done if we can show that \eqref{eq:derivations} is a weak homotopy equivalence. 
For $k\geq 1$, the map
$$H_k(\hl) \to \pi_k(\exp_\bullet(\hl))$$
that sends the homology class of a cycle $\theta\in \hl_k$ to the homotopy class of the $k$-simplex $\omega \tensor \theta \in Z_0(\Omega^*(\Delta^k) \tensor \hl)$, where $\omega$ is the fundamental form $k!dt_1\wedge \ldots \wedge dt_k$, is an isomorphism.
This can be checked directly by using the normalized chain complex for computing the homotopy groups of a simplicial vector space (the underlying simplicial set of $\exp_\bullet(\hl)$ is the simplicial vector space $Z_0(\Omega_\bullet \tensor \hl)$), or by using the isomorphism of simplicial sets $\exp_\bullet(\hl) \cong \MC_\bullet(s^{-1}\hl)$ (where the desuspension $s^{-1}\hl$ is viewed as an abelian dg Lie algebra) and invoking \cite[Theorem 4.6]{Berglund}.

On the other hand, it is well-known that the homotopy groups of the target of \eqref{eq:derivations} are computable in terms of derivations of $\Lambda$, cf.~\cite{BL,BuM,LS,Sullivan}. This goes as follows (cf.~\cite[Theorem 2.1]{LS}):
by the equivalence of homotopy categories between finite type Sullivan algebras and rational nilpotent spaces of finite $\QQ$-type,
$$\pi_k\aut_\circ \langle \Lambda \rangle = [S^k,\aut_\circ \langle \Lambda \rangle]_* \cong [S_\QQ^k,\aut_\circ \langle \Lambda \rangle]_* \cong [S_\QQ^k \times \langle \Lambda \rangle, \langle \Lambda \rangle]_{\langle \Lambda \rangle} \cong [\Lambda, H^*(S^k)\tensor \Lambda]^\Lambda.$$
The set $[\Lambda, H^*(S^k)\tensor \Lambda]^\Lambda$ of homotopy classes of cdga morphisms $f\colon \Lambda \to H^*(S^k)\tensor \Lambda$ over $\Lambda$ is in bijection with the set of homology classes of degree $k$ chain derivations $\theta\colon \Lambda\to \Lambda$ via $f(x) = 1\tensor x + z \tensor\theta(x)$, where $z\in H^k(S^k)$ is a generator.

The final step is to check that the composite map
$$H_k(\hl) \to \pi_k(\exp_\bullet(\hl)) \to \pi_k(\aut_\circ \langle \Lambda \rangle) \cong H_k(\Der \Lambda)$$
is the identity map. We leave this as an exercise to the reader.
\end{proof}

\subsection{A relative Sullivan model for the universal \texorpdfstring{$\xi$}{\textxi}-fibration}
Let $X$ be a simply connected finite complex, let $\xi$ be a bundle over $X$ with structure group $G$ and let $\monoid$ be a connected monoid acting on $X$ by homotopy equivalences. Assume that the space $BG$ is nilpotent and of finite $\QQ$-type.

Let $\Lambda$ be a Sullivan model for $X$ of finite type, let $\hl$ be a positively graded dg Lie algebra acting on $\Lambda$ by derivations, modeling the action of $\monoid$ on $X$ as in the previous section, and let $\Pi$ be a degreewise nilpotent finite type Lie model for $BG$. Fix characteristic cochains $p_i(\xi)\in \Lambda$ for the bundle $\xi$ and let
$$\tau(\xi) = \sum_i p_i(\xi) \tensor q_i$$
denote the corresponding Maurer-Cartan element in $\Lambda \ctensor \Pi$.

The dg Lie algebra $\hl$ acts by derivations on the dg Lie algebra $\Lambda \ctensor \Pi$ by $\theta \cdot (x\tensor q) = (\theta \cdot x)\tensor q$, so we may form the semi-direct product dg Lie algebra $\hl \ltimes \Lambda\ctensor \Pi$. The element $\tau(\xi)$ may be viewed as a Maurer-Cartan element in this semi-direct product. Define
$$\hl^\xi = \big(\hl \ltimes \Lambda \ctensor \Pi \big)^{\tau(\xi)}  \langle 0 \rangle,$$
i.e., the dg Lie algebra $\hl^\xi$ is obtained by twisting $\hl \ltimes \Lambda \ctensor \Pi$ by the Maurer-Cartan element $\tau(\xi)$ and then taking the non-negative truncation. We note that $\hl^\xi$ acts on $\Lambda$ by derivations via the evident map to $\hl$.

Next, define $1$-cochains $P_i\in C^*(\hl^\xi,\Lambda)$ by
$$P_i(s x\tensor q) = (-1)^{|x||q|} p_i(sq) x,$$
for $x\in \Lambda$ and $q\in \Pi$, and $P_i(s\theta) = 0$ for $\theta\in \hl$.

\begin{theorem} \label{thm:sullivan model}
\begin{enumerate}
\item The action of $\hl^\xi$ on $\Lambda$ models the action of $\aut_\monoid(\xi)$ on $X$. In particular, the universal $\xi$-fibration with holonomy in $\monoid$,
$$B\big(\aut_{\monoid}(\xi),X\big) \to B\aut_{\monoid}(\xi),$$
is modeled by the relative Sullivan algebra
$$C^*(\hl^\xi) \to C^*(\hl^\xi,\Lambda).$$

\item Characteristic cochains $p_i(\zeta)\in C^*(\hl^\xi,\Lambda)$ of the total bundle $\zeta$ of the universal $\xi$-fibration are given by
\begin{equation*}
p_i(\zeta) = p_i(\xi) + P_i,
\end{equation*}
where $p_i(\xi)\in \Lambda$ is viewed as a $0$-cochain in $C^*(\hl^\xi,\Lambda)$ and $P_i$ is the $1$-cochain described above.
\end{enumerate}
\end{theorem}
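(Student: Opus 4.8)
The plan is to deduce both parts from the homotopy-theoretic model of Theorem~\ref{thm:universal xi-fibration} together with the Lie-model machinery of \S3.3. By Theorem~\ref{thm:universal xi-fibration}, the underlying fibration of the universal $\xi$-fibration with holonomy in $\monoid$ is, up to weak equivalence, the Borel construction of $\monoid_\xi$ acting on $\map_A(X,BG)_\xi$ (here with $A$ empty), and the total bundle is classified by the evaluation map to $BG$. So the task is to produce a Lie model for the pair $\big(\aut_\monoid(\xi),X\big)$ — equivalently, after Remark~\ref{rmk:fibration}, for the fibration $X\dquot\aut_\monoid(\xi)\to B\aut_\monoid(\xi)$ — and then, on top of that, a cdga model of the evaluation map recording the characteristic cochains of $\zeta$.

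First I would treat part (1). The monoid $\aut_\monoid(\xi)$ sits in a homotopy pullback over $\monoid$ along $\map(X,BG)\to BG$-type data; rationally, $\monoid$ is modeled by $\exp_\bullet(\hl)$ acting on $\langle\Lambda\rangle$, the space $BG$ by $\exp_\bullet(\Pi)$, and the mapping space $\map(X,BG)_\xi$ by the component of the twisted object built from $\Lambda\ctensor\Pi$ around the Maurer--Cartan element $\tau(\xi)$ — this is the standard dg-Lie model for mapping spaces into $BG$, cf.\ \cite{Berglund,Berglund-fibrations}, whose relevant truncation is $(\Lambda\ctensor\Pi)^{\tau(\xi)}\langle 0\rangle$. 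Assembling these over $\monoid$ amounts to forming the semidirect product $\hl\ltimes\Lambda\ctensor\Pi$, twisting by $\tau(\xi)$ (viewed as an MC element there, which it is because $\tau(\xi)$ is MC in $\Lambda\ctensor\Pi$ and $\hl$ acts by derivations fixing it), and taking the non-negative truncation: precisely $\hl^\xi$. The map $\hl^\xi\to\hl$ models the forgetful monoid map $\aut_\monoid(\xi)\to\monoid$, and since $\hl$ acts on $\Lambda$ modeling $\monoid\curvearrowright X$, one checks that $\hl^\xi$ acting on $\Lambda$ through this map models $\aut_\monoid(\xi)\curvearrowright X$. The cleanest route is to verify the homotopy fiber sequence on classifying spaces: $C^*(\hl)\to C^*(\hl^\xi)$ should model $B\monoid\leftarrow B\aut_\monoid(\xi)$ with homotopy fiber $C^*\big((\Lambda\ctensor\Pi)^{\tau(\xi)}\langle 0\rangle\big)$, a model for $\map(X,BG)_\xi$ — this matches the fibration $\map(X,BG)_\xi\dquot\monoid_\xi\to B\monoid_\xi$ underlying $B\aut_\monoid(\xi)$ by Theorem~\ref{thm:decorated}. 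Then the relative Sullivan model of $X\dquot\aut_\monoid(\xi)\to B\aut_\monoid(\xi)$ is $C^*(\hl^\xi)\to C^*(\hl^\xi,\Lambda)$ by the Proposition preceding Proposition~\ref{prop:derivation}.

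For part (2), I would identify the composite $B(\aut_\monoid(\xi),X)\xrightarrow{ev}BG$ at the level of models. The evaluation map $\map(X,BG)_\xi\times X\to BG$ dualizes to a cdga map $C^*(\Pi)\to (\text{model of }\map(X,BG)_\xi)\otimes\Lambda$; chasing through the Chevalley--Eilenberg identifications, the class $p_i\in C^*(\Pi)$ (the dual $1$-cochain to $q_i$) is sent to $p_i(\xi)$ plus the "universal" contribution coming from the $\Lambda\ctensor\Pi$-directions of $\hl^\xi$, which is exactly the $1$-cochain $P_i$ defined by $P_i(s(x\otimes q))=(-1)^{|x||q|}p_i(sq)\,x$ and $P_i(s\theta)=0$. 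Concretely: under the inclusion $\Lambda\ctensor\Pi\hookrightarrow\hl^\xi$, dualizing the pairing that sends $s(x\otimes q)$ to the coefficient of $q$ times $x$ reproduces $P_i$, and the $\hl$-directions contribute nothing because $ev$ only depends on the mapping-space and $X$ coordinates. So $\varphi_\zeta(p_i)=p_i(\xi)+P_i$ as a cochain in $C^*(\hl^\xi,\Lambda)$, which is the asserted formula. One must also confirm that $p_i(\zeta)$ is a genuine characteristic cochain, i.e.\ that it is closed in $C^*(\hl^\xi,\Lambda)$ and that it assembles into a cdga/MC morphism; this follows because $\tau(\xi)$ is Maurer--Cartan and the twist in $\hl^\xi$ is by $\tau(\xi)$, so the Chevalley--Eilenberg differential applied to $P_i$ produces exactly the terms that cancel $d\,p_i(\xi)$ and the bracket contributions — this is the usual mechanism by which twisting encodes the classifying map.

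The main obstacle I expect is bookkeeping rather than conceptual: getting the signs and the twist right in the identification of $ev$ with the dual-pairing cochain $P_i$, and verifying that the semidirect-product-then-twist-then-truncate construction genuinely assembles the three separate rational models (of $\monoid$, of $BG$, of $\map(X,BG)_\xi$) into a model of $\aut_\monoid(\xi)$ compatibly with the $X$-action. For the latter I would lean on the homotopy-pullback description from the proof of Theorem~\ref{thm:universal xi-fibration} and Lemma~\ref{lemma:homotopy orbits}, translated into dg Lie algebras via the exponential functor $\exp_\bullet$, so that homotopy pullbacks of grouplike monoids correspond to the appropriate (semidirect-product, twisted) constructions on the Lie side; the finite-type and nilpotence hypotheses on $BG$ are what make $\exp_\bullet$ and $C^*(-)$ behave, and the simple-connectivity and finiteness of $X$ are what make $\Der\Lambda\langle1\rangle$-type arguments (via Proposition~\ref{prop:derivation}, when $\monoid=\aut_\circ(X)$) available as the input $\hl$.
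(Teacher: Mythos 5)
Your proposal is correct and follows essentially the same route as the paper: reduce via Theorem \ref{thm:universal xi-fibration} and Theorem \ref{thm:decorated} to the two-sided bar construction $B(\map(X,BG)_\xi,\monoid,X)$, model the mapping space by $(\Lambda\ctensor\Pi)^{\tau(\xi)}\langle 0\rangle$ and assemble into $\hl^\xi$ via the semidirect-product/twist/truncation (the paper phrases this as a coalgebra isomorphism $C_*\big(C_*(\homl^{\tau}\langle 0 \rangle),\hl,M\big) \cong C_*\big( (\hl \ltimes \homl)^{\tau}\langle 0 \rangle,M\big)$), and then read off $p_i(\zeta)=p_i(\xi)+P_i$ from a model of the evaluation map. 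The one organizational difference is that the paper isolates the evaluation-map step as a standalone result (Theorem \ref{thm:evaluation map}), proved by exhibiting an explicit $\hl$-equivariant twisting function, which supplies the precise justification for the identification you sketch in your second paragraph.
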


\begin{proof}
By Theorem \ref{thm:universal xi-fibration} and Theorem \ref{thm:decorated}, the fibration $B(\aut_\monoid(\xi),X) \to B\aut_\monoid(\xi)$ is weakly equivalent to
\begin{equation} \label{eq:universal fibration}
B\big( \map(X,BG)_\xi,\monoid,X\big) \to B\big( \map(X,BG)_\xi,\monoid,*\big).
\end{equation}
Let $C = \Lambda^\vee$ be the dual dg coalgebra and let $\homl$ denote the dg Lie algebra $\Hom(C,\Pi) \cong \Lambda \ctensor \Pi$.
An adaptation of the argument of \cite{Berglund-fibrations} (with $C$ replacing $\mathcal{C} L$ and $\hl$ replacing $(\Der L \ltimes_{\ad} sL)\langle 1 \rangle$, and using $\tau=\tau(\xi)$) shows that the map \eqref{eq:universal fibration} is modeled by the map of dg coalgebras
$$C_*\big(C_*(\homl^{\tau}\langle 0 \rangle),\hl,C\big) \to C_*\big(C_*(\homl^{\tau}\langle 0 \rangle),\hl\big).$$
Now observe that there is an isomorphism of dg coalgebras
$$C_*\big(C_*(\homl^{\tau}\langle 0 \rangle),\hl,M\big) \cong C_*\big( (\hl \ltimes \homl)^{\tau}\langle 0 \rangle,M\big),$$
natural in $U\hl$-module coalgebras $M$. It follows that the fibration $X\dquot \aut_\monoid(\xi) \to B\aut_\monoid(\xi)$ is rationally equivalent to the fibration constructed from the action of $\hl^\xi$ on $\Lambda$. In view of Remark \ref{rmk:fibration}, this means that the action of $\hl^\xi$ on $\Lambda$ models the action of $\aut_\monoid(\xi)$ on $X$.

To establish the formula for the characteristic cochains, it is by Theorem \ref{thm:universal xi-fibration} enough to understand how to model evaluation maps. This will be dealt with in the next section.
\end{proof}

\begin{remark} \label{rmk:explicit P_i}
For a more explicit description of the cochain $P_i$, choose a basis $\{x_\ell\}$ for $\Lambda$ and let $p_i^{x_\ell} \in C^*(\hl^\xi)$ denote $1$-cochain characterized by
$$p_i^{x_\ell}(sx_k\tensor q_j) = \delta_{(i,\ell),(j,k)}$$
and $p_i^{x_\ell}(s\theta) = 0$ for all $\theta\in \hl$. The cochain $P_i$ then assumes the form
\begin{equation} \label{eq:P_i}
P_i = \sum_{\ell} p_i^{x_\ell} x_\ell \in C^*(\hl^\xi,\Lambda),
\end{equation}
where we view $x_\ell\in \Lambda$ as a $0$-cochain in $C^*(\hl^\xi,\Lambda)$.
The sum is finite because of the truncation in the definition of $\hl^\xi$ and because $\Lambda$ is of finite type. Indeed, this implies that for $i$ fixed, the cochain $p_i^{x_{\ell}}$ is zero for all but finitely many $\ell$.
\end{remark}

\subsection{Rational models for evaluation maps}
\begin{theorem} \label{thm:evaluation map}
Let $X$ be a simply connected finite CW-complex, let $Z$ be a nilpotent space of finite $\QQ$-type and let $f\colon X\to Z$ be a map.
If $C$ is a fibrant cdgc model for $X$ of finite type, $\Pi$ is a degreewise nilpotent Lie model for $Z$ of finite type and $\tau\colon C\to \Pi$ is a twisting function that models the map $f\colon X\to Z$, then the map 
$$C_* \big(\Hom^\tau(C,\Pi)\langle 0 \rangle \big) \tensor C \to \Pi$$
determined by
$$1\tensor x \mapsto \tau(x),$$
$$sg \tensor x \mapsto g(x),$$
$$sg_1\wedge \cdots \wedge sg_n \tensor x \mapsto 0,\quad n>1,$$
is a twisting function that models the evaluation map
$$\map(X,Z)_f \times X \to Z.$$
\end{theorem}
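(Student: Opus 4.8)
The plan is to combine the rational model for the mapping space that is already available with a hom--tensor adjunction for twisting functions, exploiting that the topological evaluation map is the counit of the adjunction $(-)\times X\dashv\map(X,-)$. Recall first---this is the input used in the proof of Theorem \ref{thm:sullivan model}, obtained by adapting the argument of \cite{Berglund-fibrations} with $C$ in place of $\mathcal{C}L$ and the present $\tau$ in place of $\tau(\xi)$---that $C_*\big(\Hom^\tau(C,\Pi)\langle 0\rangle\big)$ is a finite type cdgc model for the path component $\map(X,Z)_f$; hence, since $C$ models $X$ and tensor products of cdgc models model products of spaces, $C_*\big(\Hom^\tau(C,\Pi)\langle 0\rangle\big)\tensor C$ is a cdgc model for $\map(X,Z)_f\times X$. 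By the twisting-function form of \cite[Theorem 1.5]{Berglund}, a twisting function $C_*\big(\Hom^\tau(C,\Pi)\langle 0\rangle\big)\tensor C\to\Pi$ determines a homotopy class of maps $\map(X,Z)_f\times X\to Z$, so what remains is to show (i) that the displayed formulas do define a twisting function $\mu$, and (ii) that the homotopy class it determines is that of the evaluation map.

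For (i) I would expand the twisting-function equation $d_\Pi\mu+\mu\,d+\tfrac12[\mu,\mu]\circ\Delta=0$ in the convolution Lie algebra $\Hom\big(C_*(L)\tensor C,\Pi\big)$, where $L=\Hom^\tau(C,\Pi)\langle 0\rangle$, sorting terms by word length in the Chevalley--Eilenberg generators $sg$. The word-length-zero part reproduces exactly the Maurer--Cartan equation satisfied by $\tau$; the word-length-one part unwinds, using the definitions of the Chevalley--Eilenberg differential $d=d_0+d_1$ on $C_*(L)$ and of the twisted convolution differential $\delta+[\tau,-]$ on $L$, to an identity valid by bilinearity of the bracket and the definition of $\delta$; and all parts of word length $\geq 2$ vanish because $\mu$ is supported in word lengths $\leq 1$ and the brackets, differentials, and coproduct of the relevant cochains again produce only terms of word length $\leq 1$. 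The truncation $\langle 0\rangle$ is harmless here since $C_*(L)\tensor C$ is $1$-connected, $X$ being simply connected and $\map(X,Z)_f$ path connected.

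For (ii) the point is to read $\mu$ through the adjunction. A twisting function out of a cdgc $D$ into a dg Lie algebra is the same datum as a morphism of dg coalgebras $D\to C_*(L)$; and for $L=\Hom^\tau(C,\Pi)\langle 0\rangle$ the hom--tensor isomorphism $\Hom(D\tensor C,\Pi)\cong\Hom\big(D,\Hom(C,\Pi)\big)$ identifies, up to the harmless truncation, twisting functions $\psi\colon D\to L$ with those twisting functions $\Phi\colon D\tensor C\to\Pi$ whose restriction along the coaugmentation $C\cong\QQ\tensor C\hookrightarrow D\tensor C$ is $\tau$, the $\Phi$ corresponding to $\psi$ being $\mu\circ(\psi_*\tensor\mathrm{id}_C)$ with $\psi_*\colon D\to C_*(L)$ the coalgebra map induced by $\psi$; taking $D=C_*(L)$ and $\psi=\tau_L$ the universal twisting function, for which $\psi_*=\mathrm{id}$, shows $\mu$ is itself such a $\Phi$. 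Now a map $h\colon Y\to\map(X,Z)_f$, classified on models by some $\psi\colon D_Y\to L$, has adjoint $\widehat h=\mathrm{ev}\circ(h\times\mathrm{id}_X)\colon Y\times X\to Z$, and by the construction of $L$ as a model for the mapping space this adjoint is modeled by $\mu\circ(\psi_*\tensor\mathrm{id}_C)$; specializing to $h=\mathrm{id}_{\map(X,Z)_f}$, which is classified by $\tau_L$, yields that $\mu$ models $\mathrm{ev}$. As corroboration one checks directly from the formulas that $\mu$ restricts along $\QQ\tensor C\hookrightarrow C_*(L)\tensor C$ to $\tau$, modeling $\mathrm{ev}|_{\{f\}\times X}=f$, and along $C_*(L)\tensor\QQ\hookrightarrow C_*(L)\tensor C$ to the Lie model of evaluation at a point $x_0\in X$.

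The step I expect to be the main obstacle is making the adjunction in (ii) homotopically precise: one must check that the bijection between twisting functions $D_Y\to\Hom^\tau(C,\Pi)\langle 0\rangle$ and $\tau$-based twisting functions $D_Y\tensor C\to\Pi$ is compatible with gauge equivalence on both sides---so that it genuinely computes the topological adjunction---and, more delicately, that the non-negative truncation $\langle 0\rangle$ does not change the set of twisting functions out of a cdgc arising as the chains of a space. These are essentially the points that already go into the construction of the mapping-space model; the other ingredients (monoidality of the model functor, the twisting-function form of \cite[Theorem 1.5]{Berglund}, and the universal property of the evaluation map) are standard, and one should keep track that the finiteness and nilpotence hypotheses on $X$, $Z$, $C$, and $\Pi$ are precisely what license them.
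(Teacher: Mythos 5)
Your part (i) is workable: expanding the twisting-function equation by word length is a legitimate, more computational alternative to the paper's argument, which instead obtains the same conclusion formally by combining the bijection $\Tw(\mathcal{C},\gl^\tau)\cong\Tw(\mathcal{C},\gl)$, $\rho\mapsto\rho+\tau\circ\epsilon$, with the adjunction $\Tw(\mathcal{C}\tensor C,\Pi)\cong\Tw(\mathcal{C},\Hom(C,\Pi))$ applied to the universal twisting function (you in effect rediscover this formal argument in the first half of your part (ii)). One caveat on the computation as you describe it: the word-length-two component does not vanish termwise. Since $d_1(sg_1\wedge sg_2)=\pm s[g_1,g_2]$ lowers word length, $\mu\circ d$ contributes $\pm[g_1,g_2](x)$ on word-length-two input, and this must be seen to cancel against the $(1,1)$-part of $\tfrac12[\mu,\mu]\circ\Delta$, namely $\sum\pm[g_1(x'),g_2(x'')]$. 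The cancellation holds precisely because the bracket on $\Hom^\tau(C,\Pi)$ is the convolution bracket, but it is a cancellation, not the vanishing your phrasing asserts.

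The genuine gap is in part (ii). The assertion that the adjoint $\widehat h=\mathrm{ev}\circ(h\times\mathrm{id}_X)$ of a map $h$ classified on models by $\psi\colon D_Y\to\Hom^\tau(C,\Pi)\langle 0\rangle$ is modeled by $\mu\circ(\psi_*\tensor\mathrm{id}_C)$ is, for $h=\mathrm{id}$, literally the statement being proved; justifying it ``by the construction of $L$ as a model for the mapping space'' is circular unless you point to where in that construction the compatibility between the algebraic hom--tensor correspondence of twisting functions and the topological adjunction $(-)\times X\dashv \map(X,-)$ is actually established. You flag this yourself as the main obstacle, but flagging it does not close it, and it is where all the content of the theorem lives. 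The paper closes it at the simplicial level: by \cite[\S 3.6]{Berglund-fibrations} the evaluation map $\map(X,Z_\QQ)\times X\to Z_\QQ$ is modeled by the tautological evaluation $\MC_\bullet(\gl)\times\langle C\rangle\to\MC_\bullet(\Pi)$ with $\gl=\Hom(C,\Pi)$; one then precomposes with the isomorphism $\langle C_*(\gl^\tau\langle 0\rangle)\rangle\cong\MC_\bullet(\gl^\tau\langle 0\rangle)$ induced by the universal twisting function and the weak equivalence $\lambda_\tau\colon\MC_\bullet(\gl^\tau\langle 0\rangle)\to\MC_\bullet(\gl)$ onto the component of $\tau$, and verifies by unwinding definitions that the composite is the realization of the stated twisting function. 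Any completed version of your argument would have to carry out essentially this verification; it would also have to handle the case of infinite-dimensional $\Pi$ (the paper passes to an inverse system of finite-dimensional nilpotent quotients), which your write-up does not address.
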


\begin{proof}
First, we show that the map is a twisting function.
For a cdgc $\mathcal{C}$, a dg Lie algebra $\gl$ and a Maurer-Cartan element $\tau\in\MC(\gl)$, there is a bijection between sets of twisting functions,
$$\Tw(\mathcal{C},\gl^\tau) \xrightarrow{\cong} \Tw(\mathcal{C},\gl),\quad \rho \mapsto \rho + \tau \circ \epsilon,$$
where $\epsilon\colon \mathcal{C} \to \QQ$ is the counit.
If we apply this observation to $\gl = \Hom(C,\Pi)$ and $\mathcal{C} = C_*(\gl^\tau\langle 0 \rangle)$, the universal twisting function composed with the inclusion, $\pi \colon C_*(\gl^\tau\langle 0 \rangle) \to \gl^\tau\langle 0 \rangle \to \gl^\tau$, gives rise to a twisting function $\pi + \tau \circ \epsilon \colon \mathcal{C} \to \gl.$
Under the adjunction isomorphism,
$$\Tw(\mathcal{C}\tensor C,\Pi) \cong \Tw(\mathcal{C},\Hom(C,\Pi)),$$
this corresponds to the map in the statement of the theorem, showing it is a twisting function.

Next, we argue that the twisting function just constructed is a model for the evaluation map. We will go through the argument in the case when $\Pi$ is finite dimensional. For the general case, one writes $\Pi$ as an inverse limit of finite dimensional nilpotent dg Lie algebras $\Pi/\Pi\langle r \rangle$ and works with the inverse system, cf.~\cite[Remark 3.17]{Berglund-fibrations}. 

The argument of \cite[\S3.6]{Berglund-fibrations} goes through (upon replacing $\mathcal{C} L$ with $C$ and $(\Der L \ltimes_{\ad} sL)\langle 1 \rangle$ by $\hl$) and shows that the evaluation map
$$\map(X,Z_\QQ)\times X \to Z_\QQ$$
is modeled by the map
$$e\colon\MC_\bullet(\gl) \times \langle C \rangle \cong \MC\big( \Hom_{\Omega_\bullet}(C\tensor \Omega_\bullet,\Pi\tensor \Omega_\bullet) \big) \times \mathcal{G}(C\tensor \Omega_\bullet) \xrightarrow{ev} \MC_\bullet(\Pi),$$
where we write $\gl = \Hom(C,\Pi)$ as before. (Here, fibrancy of $C$ is needed to ensure that $\langle C \rangle$ has the correct homotopy type)

Next, the map $\lambda_\tau\colon \MC_\bullet(\gl^\tau\langle 0 \rangle) \to \MC_\bullet(\gl)$ that adds $\tau$ is a weak equivalence to the component that is determined by $\tau$, cf.~\cite[Corollary 4.11]{Berglund}. Also, note that the universal twisting function $\alpha\colon C_*(\gl^\tau\langle 0 \rangle)\to \gl^\tau\langle 0 \rangle$ induces an isomorphism $\langle C_*(\gl^\tau\langle 0 \rangle) \rangle \to \MC_\bullet(\gl^\tau\langle 0 \rangle)$. Now compose:
\begin{align*}
\langle C_*(\gl^\tau \langle 0 \rangle) \tensor C \rangle & \cong \langle C_*(\gl^\tau \langle 0 \rangle)  \rangle \times \langle C \rangle \\
& \xrightarrow{\alpha_*\times 1} \MC_\bullet(\gl^\tau \langle 0 \rangle) \times \langle C \rangle \\
& \xrightarrow{\lambda_\tau\times 1} \MC_\bullet(\gl) \times \langle C \rangle \\
& \xrightarrow{e}  \MC_\bullet(\Pi).
\end{align*}
By writing out definitions, one sees that this composite may be identified with the map induced by the twisting function in the statement of the theorem.
By the above, the composite map models the evaluation map $\map(X,Z_\QQ)_{rf} \times X\to Z_\QQ$, where $r\colon Z\to Z_\QQ$ is the rationalization. Since $X$ is a finite complex and $Z$ is nilpotent, this is rationally equivalent to $\map(X,Z)_f \times X \to Z$.
\end{proof}

\begin{remark}
Rational models for evaluation maps have been studied before by many authors, see e.g.~\cite{BuM-basic,Kuribayashi,LS,LS2}, but our approach of using twisting functions appears to be new.
\end{remark}

To finish the proof of Theorem \ref{thm:sullivan model}(2), note that since $\hl$ acts on $C = \Lambda^\vee$ by coderivations, the twisting function of Theorem \ref{thm:evaluation map} is $\hl$-equivariant (cf.~\cite[Proposition 3.18]{Berglund-fibrations}). This implies that it induces a twisting function
\begin{equation} \label{eq:twisting function}
C_*(C_*(\Hom^\tau(C,\Pi)\langle 0 \rangle),\hl,C) \to \Pi,
\end{equation}
given by the same formula, and that this models the evaluation map
$$B\big(\map(X,BG)_\xi,\monoid,X\big) \to BG.$$
Observing, as before, that
$$C_*(C_*(\Hom^\tau(C,\Pi)\langle 0 \rangle),\hl,C)^\vee \cong C^*(\hl^\xi,\Lambda),$$
it is an exercise to see that the Maurer-Cartan element in $C^*(\hl^\xi,\Lambda) \ctensor \Pi$ corresponding to \eqref{eq:twisting function} is given by the formula in Theorem \ref{thm:sullivan model}(2).

\section{Sample calculations and applications}

\subsection{Even spheres}
We consider the oriented tangent bundle $\xi = \tau_{S^m}$ of an even dimensional sphere $S^m$ for $m=2k \geq 2$ and compute a model for the universal $\xi$-fibration over $B\aut(\xi)$ using Theorem \ref{thm:sullivan model}.

Thus, $G = SO(2k)$ in this case and we have
$$H^*(BSO(2k);\QQ) = \QQ[p_1,\ldots,p_{k-1},e],\quad |p_i| =4i,\quad |e| = 2k,$$
where $p_i$ are the Pontryagin classes and $e$ the Euler class.
$$\Pi = \pi_*(SO(2k))\tensor \QQ =  \langle q_1,\ldots,q_{k-1},\epsilon \rangle,\quad |q_i|=4i-1,\quad |\epsilon| = 2k-1,$$
where $q_i$ is dual to $p_i$ and $\epsilon$ to $e$ under the Hurewicz pairing.

The minimal Sullivan model for $S^{2k}$ has the form
$$\left( \Lambda(x,y), x^2 \frac{\partial}{\partial y} \right),$$
with $x$ and $y$ in cohomological degrees $2k$ and $4k-1$, respectively.
The dg Lie algebra $\Der \Lambda \langle 1 \rangle$ has basis
$$\frac{\partial}{\partial x}, \quad \frac{\partial}{\partial y}, \quad x\frac{\partial}{\partial y},$$
and the only non-trivial differential is given by
$$\frac{\partial}{\partial x} \mapsto -2x\frac{\partial}{\partial y}.$$
Therefore, if we let $\hl\subset \Der \Lambda$ be the abelian dg Lie subalgebra spanned by $\frac{\partial}{\partial y}$, then the inclusion $\hl \to \Der \Lambda \langle 1\rangle$ is a quasi-isomorphism.

If we equip the cohomology $H = H^*(S^{2k};\QQ) = \QQ[x]/(x^2)$ with the trivial $\hl$-action, then the section $i\colon H\to \Lambda$ of $p$ sending $1$ to $1$ and the class of $x$ to $x$ is a quasi-isomorphism of dg $\hl$-modules (but not of algebras). The characteristic classes of $\xi$ are $p_i(\xi) = 0$ and $e(\xi) = 2x$ and as cocycle representatives in $\Lambda$ we may choose their images under $i$. The Maurer-Cartan element is $\tau(\xi) = 2x\tensor \epsilon$.

The section $i$ induces a quasi-isomorphism of dg Lie algebras
$$ \hl \times \big(H\tensor \Pi\big)\langle 0 \rangle \to \left(\hl \ltimes \Lambda\widehat{\tensor} \Pi \right)^{\tau(\xi)}\langle 0 \rangle.$$

Thus, a Lie model for $B\aut(\xi)$ is given by
$$\gl = \hl \times \big(H\tensor \Pi\big)\langle 0 \rangle$$
This is the abelian dg Lie algebra with trivial differential and basis
$$\frac{\partial}{\partial y},\quad 1\tensor q_1,\, \ldots,\, 1\tensor q_{k-1}, \quad 1\tensor \epsilon, \quad x\tensor q_{r}, \,\ldots,\, x\tensor q_{k-1},$$
where $r = \lceil \frac{k+1}{2} \rceil$. Thus,
$$R = C^*(\gl) = \QQ\left[a,p_1,\ldots,p_{k-1},e,p_r^x,\ldots,p_{k-1}^x\right],$$
where $a$ is the dual $1$-cochain of $\frac{\partial}{\partial y}$ and $p_i^x$ is dual to $x\tensor q_i$, and so forth. The differential is zero, so
$$H^*(B\aut(\tau_{S^{2k}});\QQ) \cong R.$$

The relative Sullivan model $C^*(\gl) \to C^*(\gl,\Lambda)$ may be identified with
$$R\to R[x,y],$$
where the differential is given by $dy = x^2 + a$. Clearly, the map
$$R[x,y] \to R[x]/(x^2+a).$$
is a quasi-isomorphism of dg $R$-algebras. Thus, the universal $\tau_{S^{2k}}$-fibration over $B\aut(\tau_{S^{2k}})$ is formal and in cohomology it is given by
$$R \to R[x]/(x^2 + a).$$

It follows from Theorem \ref{thm:sullivan model} that the characteristic classes of the total bundle $\zeta$ over $S^{2k}\dquot \aut(\xi)$ are given by
\begin{align*}
p_i(\zeta) & = p_i + p_i^x\cdot x, \\
e(\zeta) & = 2x + e,
\end{align*}
where $p_i^x$ should be interpreted as $0$ for $1\leq i \leq r-1$.

The pushforward map $\pi_!\colon R[x]/(x^2+a) \to R$ is determined by $R$-linearity and $\pi_!(x) = 1$.
The $\kappa$-classes can now be computed explicitly as elements of the polynomial ring $R$:
\begin{gather*}
\kappa_{p_i} = \pi_!(p_i(\zeta)) = \pi_!(p_i + p_i^x \cdot x) = p_i^x, \\
\kappa_{ep_i} = \pi_!((2x+e)\cdot (p_i + p_i^x \cdot x)) = 2p_i + e p_i^x, \\
\kappa_{e^2} = \pi_!( (2x+e)^2) = 4e, \\
\kappa_{e^3} = \pi_!( (2x+e)^3) = -8a+6e^2.
\end{gather*}
A look at the linear terms of these expressions shows that the classes
$$\kappa_{ep_1},\, \ldots,\, \kappa_{ep_k}, \quad \kappa_{p_r}\, ,\ldots,\, \kappa_{p_k},$$
where we write $p_k$ for $e^2$, are algebraically independent and generate $R$. This proves Theorem \ref{thm:even spheres}.

To prove Theorem \ref{thm:spheres} for $m$ even, observe that the sphere bundle associated to the universal oriented vector bundle may be identified the fibration
$$
S^m \to BSO(m) \xrightarrow{\pi} BSO(m+1)
$$
induced by the inclusion $SO(m) \to SO(m+1)$, and the fiberwise tangent bundle may be identified with the universal oriented vector bundle $\gamma^m$ over $BSO(m)$.
For this $\tau_{S^m}$-fibration, we have that
$$\kappa_{e\HL_i}= 2\HL_i \in H^{4i}(BSO(m+1);\QQ),$$
so it follows that $H^*(BSO(m+1);\QQ)$ is a polynomial ring in the classes
$$\kappa_{e\HL_1},\ldots,\kappa_{e\HL_k}.$$
The computation of $H^*(B\aut(\tau_{S^m})_L;\QQ)$ in the proof of Corollary \ref{cor:tautological ring} (which does not use Theorem \ref{thm:spheres}) then shows that $ BSO(m+1)\to B\aut(\tau_{S^m})_L$ induces an isomorphism in rational cohomology.

\subsection{Odd spheres}
Consider the sphere $S^m$, for $m=2k+1$ odd, and its tangent bundle $\xi = \tau_{S^m}$ viewed as an oriented vector bundle.
The structure group is $G = SO(2k+1)$ and the cohomology of its classifying spaces is $H^*(BSO(2k+1);\QQ) = \QQ[p_1,\ldots,p_k]$.

The minimal model for $S^{2k+1}$ has the form
$$\Lambda = \left(\Lambda x,0 \right),\quad |x|=2k+1.$$
The dg Lie algebra $\hl = \Der \Lambda$ is one-dimensional and spanned by
$$\frac{\partial}{\partial x}.$$
We have that $p_i(S^{2k+1}) = 0$ for all $i$, so we may take $\tau(\xi) = 0$.

The dg Lie algebra model $\hl^\xi$ for $B\aut(\tau_{S^{2k+1}})$ from Theorem \ref{thm:sullivan model} then has basis
$$
\frac{\partial}{\partial x},\quad q_1,\, \ldots,\, q_k,\quad  xq_r,\, \ldots,\, xq_k,
$$
where $r = \lceil \frac{k+1}{2} \rceil$. The differential is zero and the only non-trivial Lie brackets are
$$
\left[\frac{\partial}{\partial x}, xq_i \right] = q_i,
$$
for $i=r,\ldots,k$. Writing $z$, $p_i$ and $p_i^x$ for the dual $1$-cochains of $\frac{\partial}{\partial x}$, $q_i$ and $xq_i$, respectively, and setting $p_i^x = 0$ for $i<r$, we see that
$$R = C^*(\hl^\xi) = \Lambda\big(p_1,\ldots,p_k,p_r^x,\ldots,p_k^x,z\big),$$
where the non-trivial differentials are given by $dp_i= p_i^x z$. The model for the universal $\tau_{S^m}$-fibration then assumes the form
$$
R \to R[x],
$$
where $x$ is adjoined as an exterior generator and $dx = z$.

Let $\Omega$ be a graded commutative algebra equipped with a degree $-(2k+1)$ derivation $D$ such that $D^2 =0$.
We may adjoin a polynomial generator $z$ of degree $2k+2$ and form the cdga
$$\big( \Omega[z], zD \big).$$
There is a natural isomorphism of algebras
$$H^*(\Omega[z],zD) \cong Z(\Omega,D) \ltimes  \overline{H^*(\Omega,D)[z]},$$
where the right hand side denotes the ring of polynomials $\sum_i a_i z^i$ where $a_0$ a cycle in $(\Omega,D)$ and the coefficients $a_i$ for $i\geq 1$ are cohomology classes with respect to $D$.

We observe that $R$ may be identified with $\big(\Omega[z],zD\big)$ if we let
$$\Omega = \QQ[p_1,\ldots,p_k,p_r^x,\ldots,p_k^x],$$
equipped with the derivation $D$ defined by $D(p_i) = p_i^x$ and $D(p_i^x) =0$.
In other words,
$$\Omega = \Omega_{\QQ[p_1,\ldots,p_k]| \QQ[p_1,\ldots,p_{r-1}]}^*$$
is the cdga of K\"ahler differential forms on the polynomial ring $\QQ[p_1,\ldots,p_k]$ that are linear over the subring $\QQ[p_1,\ldots,p_{r-1}]$, with differential $D$ of degree $-(2k+1)$.

Moreover, $R[x]$ may be identified with the same construction applied to $\Omega[x]$ with derivation $D$ defined as above and extended by $D(x) =1$.

Observe that $H^*(\Omega[x],D) = 0$ since $xD + Dx = 1$. Furthermore, the projection $Z(\Omega[x],D) \to \Omega$ sending $a+bx$ to $a$ is an isomorphism with inverse $Dx$. Hence, we may identify the pullback and pushforward maps in cohomology
$$
\xymatrix{H^*(R) \ar[d]^-\cong \ar@<1ex>[r]^-{\pi^*} & H^*(R[x]) \ar@<1ex>[l]^-{\pi_!} \ar[d]^-\cong \\
Z(\Omega,D) \ltimes \overline{H^*(\Omega,D)[z]} \ar@<1ex>[r]^-i  &  \ar@<1ex>[l]^-D \Omega}
$$
where $i$ is the inclusion of $Z(\Omega,D)$ into $\Omega$.

For our particular $\Omega$, we obviously have
\begin{align*}
H^*(\Omega,D) & = \QQ[p_1,\ldots,p_{r-1}], \\
Z(\Omega,D) & = \QQ[p_1,\ldots,p_{r-1}] \ltimes D\Omega,
\end{align*}
so we can rewrite the cohomology of $(\Omega[z],zD)$ as
$$\QQ[p_1,\ldots,p_{r-1},z] \ltimes D \Omega_{\QQ[p_1,\ldots,p_k] | \QQ[p_1,\ldots,p_{r-1}]}^*,$$
where $z$ acts trivially on the second factor.

Since $p_i(\xi) = 0$ for all $i$, the characteristic cochains for the total bundle $\zeta$ are given by
$$p_i(\zeta) = P_i = p_i + p_i^x x \in R[x].$$
Observe that $p_i^x = D(p_i)$, so $p_i(\zeta) = p_i + D(p_i) x$. Since $D$ is a derivation, this implies that
$$c(\zeta) = c + D(c) x$$
for every $c\in \QQ[p_1,\ldots,p_k]$.
Hence,
$$\kappa_c = D(c).$$

The class represented by $z\in R$ may be identified with the Euler class of the underlying spherical fibration.

The fact that $p_i(\zeta) = p_i$ for $1\leq i \leq r-1$ allows us to identify the class represented by the cocycle $p_i\in R$ with the class we called $\alpha_{p_i}$ in the introduction.

Thus, Theorem \ref{thm:odd spheres} is proved.

To prove Theorem \ref{thm:spheres} in the case $m$ odd, use the Hirzebruch $L$-classes as generators for $H^*(BSO(m);\QQ)$ when constructing the model, so that
$$R= \Lambda\big( \HL_1,\ldots,\HL_k, \HL_r^x, \ldots, \HL_k^x,z \big),$$
with differential $d\HL_i = \HL_i^x z$ for $r\leq i \leq k$. We have
$$\kappa_{\HL_i} = \HL_i^x.$$
A model $R_L$ for the homotopy fiber $B\aut(\tau_{S^m})_L$ is then obtained by adding new generators $M_i$ to kill the cocycles $\HL_i^x$ for $i=r,\ldots,k$;
$$R_L = \Lambda\big( \HL_1,\ldots,\HL_k, \HL_r^x, \ldots, \HL_k^x,z,M_r,\ldots,M_k \big),$$
$$dM_i = \HL_i^x.$$
One checks that the cohomology of $R_L$ is a polynomial ring in the classes
$$\HL_1,\ldots,\HL_{r-1}, \quad \HL_r - M_r z, \ldots , \HL_k - M_k z,$$
and that the map
$$H^*(B\aut(\tau_{S^m})_L;\QQ) \to H^*(BSO(m+1);\QQ)$$
sends these to the Hirzebruch $L$-classes.

\subsection{Complex projective spaces}

\begin{lemma} \label{lemma:fw generator}
For every generator $\omega\in H^2(\CP^n;\QQ)$ there is a unique class $\omega_{fw}(\pi)\in H^2(E;\QQ)$ naturally associated to orientable $\CP^n$-fibrations $\pi\colon E\to B$ that fulfills $\omega_{fw}(\CP^n\to *) = \omega$. Here, naturality means that $g^*(\omega_{fw}(\pi)) = \omega_{fw}(\pi')$ whenever
$$
\xymatrix{E' \ar[d]^-{\pi'} \ar[r]^-g & E \ar[d]^-\pi \\ B' \ar[r]^-{f} & B}
$$
is a homotopy cartesian square in which $\pi$ and $\pi'$ are orientable $\CP^n$-fibrations.

The class $\omega_{fw}(\pi)$ is characterized by $\pi_!(\omega_{fw}(\pi)^{n+1}) = 0$ and $\omega_{fw}(\pi)|_{\CP^n} = \omega$.
\end{lemma}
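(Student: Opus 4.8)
The plan is to prove the two displayed characterizing properties first --- for each orientable $\CP^n$-fibration $\pi\colon E\to B$ there is a unique $\omega_{fw}(\pi)\in H^2(E;\QQ)$ with $\omega_{fw}(\pi)|_{\CP^n} = \omega$ and $\pi_!\big(\omega_{fw}(\pi)^{n+1}\big) = 0$ --- and then to read off naturality and the uniqueness of the natural assignment from these. Throughout, $\pi_!$ is taken with respect to the fiberwise complex orientation of $\CP^n$, which is preserved by the holonomy precisely because the fibration is orientable, so $\lambda := \int_{\CP^n}\omega^n$ is a well-defined nonzero scalar.

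\emph{A class restricting to $\omega$, and Leray--Hirsch.} Since $\pi_1(B)$ acts trivially on $H^*(\CP^n;\QQ)$, the rational Serre spectral sequence of $\pi$ has $E_2^{p,q} = H^p(B;\QQ)\otimes H^q(\CP^n;\QQ)$, concentrated in even fiber degree, so $d_2 = 0$ and $E_3 = E_2$. The only differential that can touch $E_3^{0,2} = H^2(\CP^n;\QQ)$ is $d_3\colon E_3^{0,2}\to E_3^{3,0} = H^3(B;\QQ)$ (higher differentials leave that bidegree into negative fiber degree, and there is nothing mapping into it). Set $\beta = d_3(\omega)$. As $d_3$ is a derivation vanishing on the permanent cycles $E_3^{*,0}$ and $\omega^{n+1} = 0$ in $H^*(\CP^n;\QQ)$, we get $0 = d_3(\omega^{n+1}) = (n+1)\,\omega^n\beta$ in $E_3^{3,2n}$; since cup product with $\omega^n$ is an isomorphism $H^3(B;\QQ)\otimes H^0(\CP^n;\QQ)\xrightarrow{\cong} H^3(B;\QQ)\otimes H^{2n}(\CP^n;\QQ)$, this forces $\beta = 0$. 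Hence $\omega$ is a permanent cycle, $E_\infty^{0,2} = E_2^{0,2}$, and the edge homomorphism $H^2(E;\QQ)\to H^2(\CP^n;\QQ)$ is onto; pick $a\in H^2(E;\QQ)$ with $a|_{\CP^n} = \omega$. By the Leray--Hirsch theorem, $H^*(E;\QQ)$ is then free over $H^*(B;\QQ)$ on $1,a,\dots,a^n$; in particular $\pi^*$ is injective, $\pi_!(a^k) = 0$ for $0\le k < n$ by degree, and $\pi_!(a^n) = \lambda$.

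\emph{Normalization, uniqueness, naturality.} Expanding $(a+\pi^*b)^{n+1}$ for $b\in H^2(B;\QQ)$ and applying $\pi_!$, only two terms survive: $\pi_!\big((a+\pi^*b)^{n+1}\big) = \pi_!(a^{n+1}) + (n+1)\lambda\,b$. So $\omega_{fw}(\pi) := a - \tfrac{1}{(n+1)\lambda}\,\pi^*\pi_!(a^{n+1})$ restricts to $\omega$ and satisfies the $\pi_!$-condition, and if $c$ and $c+\pi^*b$ both do, the same computation (valid for any class restricting to $\omega$) gives $(n+1)\lambda\,b = 0$, hence $b = 0$: uniqueness. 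For naturality, given a homotopy cartesian square with top map $g$ and bottom map $f$, the class $g^*\omega_{fw}(\pi)$ restricts to $\omega$ on the fibers of $\pi'$, and by the base-change identity $\pi'_!\circ g^* = f^*\circ\pi_!$ for integration along the fiber (naturality of the Serre spectral sequence) we get $\pi'_!\big((g^*\omega_{fw}(\pi))^{n+1}\big) = f^*\pi_!\big(\omega_{fw}(\pi)^{n+1}\big) = 0$; by the uniqueness just shown, $g^*\omega_{fw}(\pi) = \omega_{fw}(\pi')$. Finally, if $\pi\mapsto c(\pi)$ is any natural assignment with $c(\CP^n\to *) = \omega$, then $\pi\mapsto \pi_!\big(c(\pi)^{n+1}\big)$ is a natural $H^2$-valued invariant of orientable $\CP^n$-fibrations that vanishes over a point, hence vanishes identically --- such invariants are classified by $H^2(B\aut_\circ(\CP^n);\QQ)$, which is rationally trivial since $\pi_1(\aut_\circ(\CP^n))\otimes\QQ = 0$ (the one degree-$(-1)$ derivation $x^n\partial/\partial y$ of the minimal model $(\Lambda(x,y),dy=x^{n+1})$ is $[d,-]$ of $\tfrac{1}{n+1}\partial/\partial x$). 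Thus $c(\pi)$ meets the characterization and equals $\omega_{fw}(\pi)$.

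The substantive step is the vanishing $d_3(\omega) = 0$, which identifies the fiber restriction with the edge homomorphism and unlocks Leray--Hirsch; everything after that --- the correction of $a$, the uniqueness argument, and the base-change property of $\pi_!$ --- is routine. A secondary bookkeeping point, worth stating but not hard, is that the lemma's characterization is what carries the uniqueness, with naturality then being automatic; the last paragraph is only needed to match the exact phrasing "unique class naturally associated''.
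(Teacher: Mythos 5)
Your proof is correct, but it is organized quite differently from the paper's. The paper works entirely at the universal level: it uses the vanishing of $H^2$ and $H^3$ of $B\aut_\circ(\CP^n)$ to see that restriction to the fibre is an isomorphism on $H^2$ of the universal total space $B\aut_{*,\circ}(\CP^n)$, defines $\omega_{fw}$ there as the preimage of $\omega$, and obtains the class for an arbitrary orientable fibration by pulling back along the classifying map; naturality is then built in, and the characterization $\pi_!(\omega_{fw}(\pi)^{n+1})=0$ falls out because the pushforward of the universal class lives in $H^2(B\aut_\circ(\CP^n);\QQ)=0$. You instead establish the characterization fibration by fibration: the $d_3$-derivation trick shows $\omega$ survives the Serre spectral sequence, Leray--Hirsch gives the module decomposition, and the explicit normalization $a\mapsto a-\tfrac{1}{(n+1)\lambda}\pi^*\pi_!(a^{n+1})$ produces the class, with uniqueness and naturality following from the projection formula and base change for $\pi_!$. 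The two routes need the same essential input in the end --- your last paragraph still invokes $H^2(B\aut_\circ(\CP^n);\QQ)=0$ to pin down an arbitrary natural assignment, which is precisely the computation the paper starts from --- but yours buys an explicit formula for $\omega_{fw}(\pi)$ and a self-contained proof of the Leray--Hirsch statement that the paper defers to ``a standard argument,'' at the cost of length. Two minor points, neither a gap: (i) in the final paragraph you should say explicitly that $c(\pi)|_{\CP^n}=\omega$ follows from naturality applied to the homotopy cartesian square given by a fibre inclusion, since you use both halves of the characterization; (ii) the Yoneda identification of natural $H^2$-valued invariants with $H^2(B\aut_\circ(\CP^n);\QQ)$ requires restricting to CW bases (or CW approximation), the same implicit convention the paper adopts when invoking classifying maps.
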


\begin{remark}
The conditions $\pi_!(\omega_{fw}(\pi)^{n+1}) = 0$ and $\omega_{fw}(\pi)|_{\CP^n} = \omega$ mean that $\omega_{fw}(\pi)$ may be identified with the `coupling class'. That the coupling class admits a homotopy theoretical definition has been observed in \cite[Proposition 3.1]{KM}. Lemma \ref{lemma:fw generator} shows that the coupling class  is the only class naturally associated to orientable $\CP^n$-fibrations that restricts to $\omega$ in the fiber.
\end{remark}

\begin{proof}
Consider the universal orientable $\CP^n$-fibration,
$$\CP^n \to B\aut_{*,\circ}(\CP^n) \to B\aut_\circ(\CP^n).$$
Since $H^k(B\aut_\circ(\CP^n);\QQ) = 0$ for $k=2,3$, an application of the Serre spectral sequence shows that the restriction map
$$H^2(B\aut_{*,\circ}(\CP^n);\QQ) \to H^2(\CP^n;\QQ)$$
is an isomorphism. Define $\omega_{fw}$ to be the preimage of $\omega$ under this map.

Since the classifying map for an orientable $\CP^n$-fibration $\pi\colon E\to B$ factors through $B\aut_\circ(\CP^n)$, there is a homotopy cartesian square
$$
\xymatrix{E\ar[d]^\pi \ar[r]^-{g_\pi} & B\aut_{*,\circ}(\CP^n) \ar[d]^-{\pi^{univ}} \\ B \ar[r]^-{f_\pi} & B\aut_{\circ}(\CP^n),}
$$
which is uniquely determined up to homotopy by $\pi$.
Define $\omega_{fw}(\pi) = g_\pi^*(\omega_{fw})$.
We note that $\pi_!^{univ}(\omega_{fw}^{n+1}) = 0$ simply because $H^2(B\aut_\circ(\CP^n);\QQ) = 0$.
By naturality of the pushforward map, $\pi_!(\omega_{fw}(\pi)^{n+1}) = 0$. The property $\omega_{fw}(\pi)|_{\CP^n} = \omega$ holds because it holds in the universal orientable fibration (by definition).
\end{proof}

The classes 
$$1,\quad \omega_{fw}(\pi),\quad \omega_{fw}(\pi)^2,\quad \ldots,\quad \omega_{fw}(\pi)^n,$$
form a basis for $H^*(E;\QQ)$ as a $H^*(B;\QQ)$-module, and yield an isomorphism of $H^*(B;\QQ)$-modules
\begin{equation} \label{eq:splitting}
H^*(E;\QQ) \cong H^*(B;\QQ)\tensor H^*(\CP^n;\QQ),
\end{equation}
which is \emph{natural} in $\pi$. The basis of course depends on the choice of generator $\omega$, but the decomposition \eqref{eq:splitting} does not.
In what follows we fix the standard generator $\omega = -c_1(\gamma^1)$, which has the convenient property $\pi_!(\omega_{fw}(\pi)^n) = 1$ (cf.~\cite[p.170]{MS}).

We now proceed to prove Theorem \ref{thm:pu}.

The minimal Sullivan model for $\CP^n$ has the form
$$\Lambda = \big( \Lambda(x,y), d\big), \quad |x|=2,\,\, |y|=2n+1,$$
where $x$ is a cocycle that represents $\omega$ and $dy = x^{n+1}$. Let
$$H= H^*(\CP^n;\QQ) = \QQ[\omega]/(\omega^{n+1}).$$
The map $p \colon \Lambda \to H$ defined by $p(x) = \omega$ and $p(y) = 0$ is a quasi-isomorphism of cdgas. It admits a section $\iota\colon H\to \Lambda$, which is a quasi-isomorphism of cochain complexes (but which does not respect the multiplication).

Now consider a bundle $\xi$ over $\CP^n$, with structure group $G$ as in the statement of Theorem \ref{thm:pu}.
As a dg Lie model for $BG$ we may take the rational homotopy groups
$$\Pi = \pi_*(G) \tensor \QQ,$$
with trivial Lie bracket and differential. It has basis
$$q_1,\,\, q_2 \ldots,$$
where $q_i \in \pi_{*}(G) \cong \pi_{*+1}(BG)$ is dual to the generator $p_i\in H^*(BG;\QQ)$ under the Hurewicz pairing between cohomology and homotopy.

If we let $p_i(\xi)\in H$ denote the characteristic classes of $\xi$, then we may choose $\iota(p_i(\xi)) \in \Lambda$ as characteristic cochains of the bundle, so that
$$\tau(\xi) = \sum_i \iota(p_i(\xi)) \tensor q_i \in H\tensor \Pi.$$

One checks that the subspace $\al\subset \Der \Lambda \langle 1 \rangle$ spanned by the derivations
$$\frac{\partial}{\partial y}, \quad x \frac{\partial}{\partial y}, \quad \ldots, \quad x^{n-1} \frac{\partial}{\partial y},$$
forms an abelian dg Lie subalgebra with trivial differential, and that the inclusion $\al \to \Der \Lambda \langle 1\rangle$ is a quasi-isomorphism.
This means that the action of $\al$ on $\Lambda$ models the action of $\aut_\circ(\CP^n)$ on $\CP^n$.

If we equip the cohomology $H$ with trivial $\al$-action, then $\iota\colon H \to \Lambda$ is a quasi-isomorphism of dg $\al$-modules, and the induced map
$$\iota_*\colon \big( \al \ltimes H\tensor \Pi \big)^{\tau(\xi)}\langle 0 \rangle \to \big(\al \ltimes \Lambda \widehat{\tensor} \Pi \big)^{\iota_*(\tau(\xi))} \langle 0 \rangle,$$
is a quasi-isomorphism of dg Lie algebras ($\iota$ is not a morphism of algebras, but this does not matter since we tensor with the abelian dg Lie algebra $\Pi$). It follows from Theorem \ref{thm:sullivan model} that the dg Lie algebra
$$\gl = \big( \al \ltimes H\tensor \Pi \big)^{\tau(\xi)}\langle 0 \rangle = \al \oplus \big(H\tensor \Pi\big)\langle 0 \rangle$$
is a model for $B\aut_\circ(\xi)$.
Explicitly, the dg Lie algebra $\gl$ has trivial Lie bracket and trivial differential, and a basis is given by
$$
\begin{array}{lll}
x^k \frac{\partial}{\partial y}, & 0\leq k \leq n-1, \\
\omega^k \tensor q_i, & 0\leq k \leq n, \quad |q_i|\geq 2k.
\end{array}
$$

Since $\gl$ is concentrated in odd degrees, the Chevalley-Eilenberg construction $C^*(\gl)$ is a polynomial algebra with trivial differential,
$$
R = \QQ[v_0,\ldots,v_{k-1},p_{i,k}],
$$
generated by the $1$-cochains $v_k$ and $p_{i,k}$ dual to $x^k \frac{\partial}{\partial y}$ and $x^k \tensor q_i$, respectively.
It follows that we have an isomorphism of graded algebras
$$H^*(B\aut_\circ(\xi);\QQ) \cong R.$$
This is as far as we get from knowing a model for the base $B\aut_\circ(\xi)$. To identify which characteristic classes $v_k$ and $p_{i,k}$ represent, we must use the full force of Theorem \ref{thm:sullivan model}.

The relative Sullivan model $C^*(\gl) \to C^*(\gl,\Lambda)$ for the universal $\xi$-fibration from Theorem \ref{thm:sullivan model} may in the case at hand be identified with
$$R \to R \tensor \Lambda (x,y),$$
where the only non-trivial differential is given by
$$d(1\tensor y) = 1\tensor x^{n+1} + \sum_{\ell=0}^{n-1} v_{\ell} \tensor x^{\ell}.$$
The evident map $R \tensor \Lambda (x,y) \to R[x]/I$, where $I$ is the ideal generated by the element $x^{n+1} + \sum_{\ell=0}^{n-1} v_{\ell} x^{\ell}$ and where $R[x]/I$ is equipped with the zero differential, is a quasi-isomorphism of cdgas over $R$.
Thus, the universal $\xi$-fibration over $B\aut_{\circ}(\xi)$ is formal and modeled by the morphism of algebras
$$R \to R[x]/I.$$

The $R$-algebra $R[x]/I$ is free as an $R$-module with basis $1,x,\ldots,x^n$. The multiplication is determined by $R$-linearity and the relation
\begin{equation} \label{eq:relation}
x^{n+1} + \sum_{\ell=0}^{n-1} v_{\ell} x^{\ell} = 0.
\end{equation}
The pushforward map in cohomology is the degree $-2n$ map
$$\pi_! \colon R[x]/I \to R$$
determined by $R$-linearity and $\pi_!(x^n) = 1$ (note that $\pi_!(x^k) = 0$ for $k<n$ for degree reasons).
Since \eqref{eq:relation} contains no $x^n$-term, we have $\pi_!(x^{n+1}) = 0$, and by construction $x$ represents $\omega$ when restricted to the fiber, so the cocycle $x$ represents the class $\omega_{fw}$ by Lemma \ref{lemma:fw generator}.
This in turn implies that the class $v_i$ in the model represents the characteristic class $a_{n+1-i}$ featured in Theorem \ref{thm:pu}.

From Theorem \ref{thm:sullivan model} and Remark \ref{rmk:explicit P_i}, one sees that the characteristic classes of the total bundle $\zeta$ of the universal $\xi$-fibration over $B\aut_{\circ}(\xi)$ are given by
$$p_i(\zeta) = \iota(p_i(\xi)) + \sum_j p_{i,j} x^j.$$
This implies that the class $p_{i,j}$ in the model $R$ represents the characteristic class $p_{i|j}$. This finishes the proof of the first part of Theorem \ref{thm:pu}.

We now proceed to the second part of Theorem \ref{thm:pu}.
For a bundle $\xi$ over a space $X$, there is a homotopy fiber sequence
\begin{equation} \label{eq:cover}
B\aut_{\circ}(\xi) \to B\aut(\xi) \to B\Gamma(\xi),
\end{equation}
where
$$\Gamma(\xi) = \pi_0\aut(X)_{[\xi]} = \set{[f]\in \pi_0\aut(X)}{f^*\xi \cong \xi}$$
is the group of homotopy classes of self-homotopy equivalences of $X$ that fix the isomorphism class of the bundle $\xi$.

For $X=\CP^n$, the group $\pi_0 \aut(\CP^n)$ of homotopy classes of self-homotopy equivalences is cyclic of order two, generated by complex conjugation $c\colon \CP^n\to \CP^n$.
For a bundle $\xi$ over $\CP^n$, it follows that
$$\Gamma(\xi) \cong \left\{ \begin{array}{ll} \ZZ/2\ZZ, & c^*(\xi) \cong \xi, \\ 0, & c^*(\xi) \not\cong \xi. \end{array} \right.$$
For example, $\Gamma(\tau_{\CP^n}) = 0$ for the complex tangent bundle $\tau_{\CP^n}$, but for the underlying oriented vector bundle $\tau_{\CP^n}^\RR$ we have
$$\Gamma(\tau_{\CP^n}^\RR) \cong \left\{ \begin{array}{ll} \ZZ/2\ZZ, & \mbox{$n$ even}, \\ 0, & \mbox{$n$ odd}, \end{array} \right.$$
because complex conjugation is a diffeomorphism of $\CP^n$ which is orientation preserving if $n$ is even and orientation reversing if $n$ is odd.

Whenever $\Gamma(\xi)$ is finite, the rational Serre spectral sequence of the fibration \eqref{eq:cover}
collapses at the second page, and allows us to identify
$$H^*(B\aut(\xi);\QQ) = R^{\Gamma(\xi)},$$
where the latter denotes the invariant subring.
Complex conjugation is modeled by the involution on the minimal model $(\Lambda(x,y),d)$ given by
$$x\mapsto -x, \quad y\mapsto (-1)^{n+1}y.$$
Using this, one can work out that the action on $R$ is determined by
$$v_k\mapsto (-1)^{k+n+1} v_k, \quad p_{i,k} \mapsto (-1)^k p_{i,k}.$$
With that, the second part of Theorem \ref{thm:pu} is proved.

\begin{remark} \label{rmk:veronese}
If $H^*(BG;\QQ)$ is concentrated in degrees divisible by $4$, then the action agrees with the action given by multiplication by $(-1)$ in degrees $4\ell+2$ and by the trivial action in degrees $4\ell$.
Thus, if this is the case, and if $c^*(\xi) \cong \xi$, then the cohomology of $B\aut(\xi)$ may be identified with the subring $R^{(4)} \subset R$ of elements in degrees divisible by $4$. For example, this is the case for $\tau_{\CP^n}^\RR$ for $n$ even.
\end{remark}

The proof of Theorem \ref{thm:cd} will depend on an analysis of the universal $U(n+1)$-bundle with fiber $\CP^n$. This in turn may be identified with the projectivization of the universal complex vector bundle of rank $n+1$, so we proceed to discuss projectivizations.

Let $p\colon E\to B$ be a complex vector bundle of rank $n+1$. By taking fiberwise projectivization, we obtain a $U(n+1)$-bundle with fiber $\CP^n$,
\begin{equation} \label{eq:projectivization}
\CP^n \to \PP(E) \xrightarrow{\pi} B.
\end{equation}
There are two distinguished vector bundles over $\PP(E)$, the fiberwise canonical line bundle $L$ and the fiberwise tangent bundle $\zeta$. These make \eqref{eq:projectivization} into a $\gamma^1$-fibration and a $\tau_{\CP^n}$-fibration, respectively.

\begin{proposition}
The following equation relates the Chern classes of the $(n+1)$-dimensional complex vector bundle $p\colon E\to B$
and the Chern classes of the fiberwise tangent bundle $\zeta$ and the line bundle $L$ over the projectivization $\PP(E)$. For all $i$,
\begin{equation} \label{eq:chern classes}
c_i(\zeta) = \sum_{j=0}^i \binom{n+1-i+j}{j}c_{i-j}(E) \cdot c_1(\overline{L})^j,
\end{equation}
where $\overline{L}$ denotes the conjugate bundle.
\end{proposition}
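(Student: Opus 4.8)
The plan is to realize $\zeta$ as a line-bundle twist of $\pi^*E$ and then apply the splitting principle together with an elementary identity for elementary symmetric polynomials.

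First I would equip $E$ with a Hermitian metric. Over $\PP(E)$ the tautological line subbundle $L\subseteq \pi^*E$ — whose fiber over $\ell\in\PP(E_b)$ is the line $\ell\subseteq E_b$, i.e.\ the fiberwise version of $\gamma^1$ — has an orthogonal complement $L^\perp$, giving a splitting $\pi^*E\cong L\oplus L^\perp$. The fiberwise tangent bundle is the bundle associated to $T\CP^n$, and the standard identification $T\CP^n\cong \Hom(\gamma^1,(\gamma^1)^\perp)$, being $U(n+1)$-equivariant, descends to $\zeta\cong \Hom(L,L^\perp)=L^\vee\otimes L^\perp$. Tensoring the splitting of $\pi^*E$ by $L^\vee$ then yields
$$L^\vee\otimes\pi^*E\cong (L^\vee\otimes L)\oplus(L^\vee\otimes L^\perp)\cong \OO\oplus\zeta,$$
so that $c(\zeta)=c(L^\vee\otimes\pi^*E)$. (Equivalently, one may invoke the relative Euler sequence $0\to\OO\to L^\vee\otimes\pi^*E\to\zeta\to 0$.) For a complex line bundle the conjugate bundle is the dual, so $c_1(L^\vee)=c_1(\overline L)$; write $t=c_1(\overline L)$.

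Next I would apply the splitting principle to $\pi^*E$, writing its Chern roots as $a_1,\dots,a_{n+1}$, so that $c_m(E)$ pulls back to the elementary symmetric polynomial $e_m(a_1,\dots,a_{n+1})$. The Chern roots of $L^\vee\otimes\pi^*E$ are $a_1+t,\dots,a_{n+1}+t$, hence $c_i(\zeta)=e_i(a_1+t,\dots,a_{n+1}+t)$. Expanding $\prod_{k=1}^{n+1}(u+xa_k)=\sum_{m=0}^{n+1}x^m e_m(a)\,u^{\,n+1-m}$ with $u=1+xt$ and extracting the coefficient of $x^i$ gives
$$e_i(a_1+t,\dots,a_{n+1}+t)=\sum_{j=0}^i\binom{n+1-i+j}{j}e_{i-j}(a)\,t^j,$$
and substituting $e_{i-j}(a)=c_{i-j}(E)$ and $t=c_1(\overline L)$ produces \eqref{eq:chern classes}.

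The argument is essentially bookkeeping and there is no genuine obstacle; the only places calling for care are the sign and duality conventions — checking that the paper's fiberwise canonical line bundle $L$ really is the tautological \emph{sub}bundle, so that $\overline L=L^\vee$ and $c_1(\overline L)$ restricts to the chosen positive generator $\omega$ of $H^2(\CP^n;\QQ)$ — and the identification $\zeta\cong\Hom(L,L^\perp)$ of the fiberwise tangent bundle. The precise binomial coefficient is pinned down unambiguously by the generating-function computation, so once the conventions are fixed the formula follows.
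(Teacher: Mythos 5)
Your proof is correct and follows essentially the same route as the paper's: both split $\pi^*E\cong L\oplus L^\perp$, identify $\zeta\cong\Hom(L,L^\perp)$ so that $\epsilon^1\oplus\zeta\cong\overline{L}\otimes\pi^*E$ (using $\overline{L}=L^\vee$ for a line bundle), and then apply the standard formula for the Chern classes of a line bundle tensored with a vector bundle. The only difference is that you spell out the splitting-principle computation of the binomial coefficients, which the paper cites as well known.
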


\begin{proof}
The proof is essentially a fiberwise version of the classical computation of the Chern classes of $\CP^n$ \cite[Theorem 14.10]{MS}.
The pullback of the vector bundle $E$ along $\pi\colon \PP(E) \to B$ can be written as
$$\pi^*(E) = L\oplus L',$$
where $L'$ is the fiberwise orthogonal complement to $L$, and the fiberwise tangent bundle is given by
$$\zeta = \Hom(L,L').$$
Since $\Hom(L,L)$ may be identified with the trivial line bundle $\epsilon^1$, we have
\begin{align*}
\epsilon^1 \oplus \zeta & \cong \Hom(L,L) \oplus \Hom(L,L') \\
& \cong \Hom(L,L\oplus L') \\
& \cong \Hom(L,\pi^*(E)) \\
& \cong \overline{L} \tensor \pi^*(E).
\end{align*}
Since $c_i(\zeta) = c_i(\epsilon^1 \oplus \zeta)$, the formula \eqref{eq:chern classes} now follows from the well-known formula for the Chern classes of a tensor product of a line bundle and a vector bundle.
\end{proof}

\begin{remark}
The fiberwise tangent bundle $\zeta$ is $n$-dimensional, so $c_{n+1}(\zeta) = 0$. Hence, a special case of \eqref{eq:chern classes} is
\begin{equation} \label{eq:projective bundle formula}
\sum_{j=0}^{n+1} c_{n+1-j}(E) \cdot c_1(\overline{L})^j = 0.
\end{equation}
\end{remark}

By an application of Theorem \ref{thm:pu} to the canonical line bundle $\gamma^1$ over $\CP^n$, for which the structure group is $U(1)$ with $H^*(BU(1);\QQ)$ equal to the polynomial ring in $e=c_1$ and $c^*(\gamma^1) \not\cong \gamma^1$, we see that the ring of characteristic classes of $\gamma^1$-fibrations may be identified with
$$H^*(B\aut(\gamma^1);\QQ) = \QQ[a_2,\ldots,a_{n+1},e_{|0}].$$

\begin{proposition} \label{prop:a vs chern}
Consider a complex $(n+1)$-dimensional vector bundle,
$$p \colon E\to B,$$
and the $\gamma^1$-fibration formed by its projectivization,
$$\CP^n \to \mathbb{P}(E) \xrightarrow{\pi} B,\quad L \to \mathbb{P}(E).$$
The following equations express the Chern classes of the vector bundle $E$ in terms of the characteristic classes of the $\gamma^1$-fibration $(\pi,L)$, and vice versa.

For $i=1,\ldots,n+1$, we have that
\begin{equation} \label{eq:Chern classes}
c_i(E) = \sum_{j=0}^i \binom{n+1-i+j}{j} a_{i-j}(\pi) e_{|0}(\pi,L)^j.
\end{equation}

On the other hand
\begin{equation} \label{eq:first chern class}
e_{|0}(\pi,L) = \frac{1}{n+1}c_1(E),
\end{equation}
and
\begin{equation} \label{eq:a-classes}
a_i(\pi)  = \sum_{j=0}^i (-1)^j\binom{n+1-i+j}{j}c_{i-j}(E) \left(\frac{c_1(E)}{n+1}\right)^j,
\end{equation}
for $i=2,\ldots, n+1$.
\end{proposition}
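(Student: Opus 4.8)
The plan is to reduce the proposition to a comparison of two polynomial relations in $H^*(\PP(E);\QQ)$, each involving a single degree-two class: the Grothendieck relation \eqref{eq:projective bundle formula} satisfied by $h := c_1(\overline{L})$, and the defining relation $\sum_{m=0}^{n+1} a_{n+1-m}(\pi)\,\omega_{fw}(\pi)^m = 0$ of the $a$-classes. The first step is to locate $\omega_{fw}(\pi)$ relative to $h$. On each fiber $\overline{L}$ restricts to $\overline{\gamma^1}$, so $h|_{\CP^n} = -c_1(\gamma^1) = \omega$; since $\{1,\omega_{fw}(\pi),\dots,\omega_{fw}(\pi)^n\}$ is a free $H^*(B;\QQ)$-basis of $H^*(\PP(E);\QQ)$, there is a unique $t\in H^2(B;\QQ)$ with $\omega_{fw}(\pi) = h + \pi^*(t)$.

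Next I would evaluate $t$ using the characterization in Lemma \ref{lemma:fw generator}. Expanding $(h+\pi^*t)^{n+1}$ by the binomial theorem and applying $\pi_!$, each term $\pi_!(h^j)$ with $j<n$ vanishes for degree reasons; $\pi_!(h^n)=\pi_!(\omega_{fw}(\pi)^n)=1$ (the two agree because $h$ and $\omega_{fw}(\pi)$ differ by a pullback, and $\pi_!(\omega_{fw}(\pi)^n)=1$ for the chosen generator); and $\pi_!(h^{n+1})=-c_1(E)$ upon applying $\pi_!$ to \eqref{eq:projective bundle formula}. Hence $0=\pi_!(\omega_{fw}(\pi)^{n+1})=-c_1(E)+(n+1)t$, so $t=\tfrac{1}{n+1}c_1(E)$. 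This gives \eqref{eq:first chern class} at once: $c_1(L)=-h=-\omega_{fw}(\pi)+\tfrac{1}{n+1}c_1(E)$, and comparing with $c_1(L)=\sum_j e_{|j}(\pi,L)\,\omega_{fw}(\pi)^j$ (only $j=0,1$ survive for degree reasons) forces $e_{|1}(\pi,L)=-1$ and $e_{|0}(\pi,L)=\tfrac{1}{n+1}c_1(E)$.

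Writing $u:=\tfrac{1}{n+1}c_1(E)=e_{|0}(\pi,L)$, the two remaining formulas are obtained by substituting $h=\omega_{fw}(\pi)-u$ (resp.\ $\omega_{fw}(\pi)=h+u$) and matching coefficients against a free basis. For \eqref{eq:a-classes}: plug $h=\omega_{fw}(\pi)-u$ into \eqref{eq:projective bundle formula}, expand, and collect the coefficient of $\omega_{fw}(\pi)^m$; the resulting relation $\sum_{m=0}^{n+1}b_m\,\omega_{fw}(\pi)^m=0$ satisfies $b_{n+1}=1$ and $b_n=0$, so by uniqueness of such a relation (relative to $\{1,\dots,\omega_{fw}(\pi)^n\}$) it must be the defining relation of the $a$-classes, i.e.\ $a_{n+1-m}(\pi)=b_m$; a reindexing and the identity $\binom{n+1-i+j}{n+1-i}=\binom{n+1-i+j}{j}$ then give \eqref{eq:a-classes}. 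For \eqref{eq:Chern classes}: symmetrically, substitute $\omega_{fw}(\pi)=h+u$ into $\sum_m a_{n+1-m}(\pi)\,\omega_{fw}(\pi)^m=0$, collect the coefficient of $h^s$, and invoke uniqueness of the Grothendieck relation \eqref{eq:projective bundle formula} (leading coefficient $1$, relative to $\{1,\dots,h^n\}$) to read off $c_i(E)$.

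The main obstacle is purely the binomial bookkeeping in the last step: checking that $b_n=0$ — so that the matched relation is precisely the one normalized by $a_1=0$, not a scalar multiple — and carrying out the reindexings. Everything else is a formal consequence of Leray--Hirsch freeness, the projection formula for $\pi_!$, and Lemma \ref{lemma:fw generator}. (One could alternatively derive \eqref{eq:Chern classes} from \eqref{eq:a-classes} by formally inverting a power-series substitution once \eqref{eq:first chern class} is known, but treating the two symmetrically via their respective relations is cleaner.)
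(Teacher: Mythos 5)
Your proof is correct and follows essentially the same route as the paper: substitute $\omega_{fw}(\pi)=c_1(\overline{L})+e_{|0}(\pi,L)$ (and its inverse) into the Grothendieck relation and the defining relation for the $a$-classes, then match coefficients against the respective free $H^*(B;\QQ)$-module bases. The only cosmetic difference is that you pin down $e_{|0}(\pi,L)=\tfrac{1}{n+1}c_1(E)$ up front via $\pi_!(\omega_{fw}(\pi)^{n+1})=0$ and $\pi_!\bigl(c_1(\overline{L})^{n+1}\bigr)=-c_1(E)$, whereas the paper reads it off from the $i=1$ case of \eqref{eq:Chern classes} after the coefficient comparison.
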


\begin{proof}
By definition of the classes $e_{|j}$,
$$
e(L) = e_{|1}(\pi,L) \cdot \omega_{fw}(\pi) + e_{|0}(\pi,L) \cdot 1 \in H^2(\mathbb{P}(E);\QQ).
$$
Since both $\omega_{fw}(\pi)$ and $-e(L)$ restrict to $\omega = - c_1(\gamma^1)$ in the fiber, it follows that $e_{|1}(\pi,L) = - 1$.
Next, insert $\omega_{fw}(\pi) = -e(L) + e_{|0}(\pi,L) = e(\overline{L}) + e_{|0}(\pi,L)$ in the defining equation for the characteristic classes $a_i$,
$$\sum_{k=0}^{n+1} a_{n+1-k} \omega_{fw}(\pi)^k = 0,$$
compare with \eqref{eq:projective bundle formula} (remember that $c_1(\overline{L}) = e(\overline{L})$), and use that $1,e(\overline{L}),\ldots,e(\overline{L})^n$ form a $H^*(B;\QQ)$-module basis for the cohomology. This yields the formula \eqref{eq:Chern classes}. The formula \eqref{eq:a-classes} is derived in a similar fashion by inserting $e(\overline{L}) = \omega_{fw}(\pi) - e_{|0}(\pi,L)$ in \eqref{eq:projective bundle formula} and by observing that \eqref{eq:Chern classes} in particular shows that
\begin{equation*}
c_1(E) = (n+1)e_{|0}(\pi,L).
\end{equation*}
\end{proof}

\begin{corollary}
The map $BU(n+1)\to B\aut(\gamma^1)$ that classifies the $\gamma^1$-fibration obtained by projectivizing the universal complex vector bundle of rank $n+1$ is a rational homotopy equivalence.
\end{corollary}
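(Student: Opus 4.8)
The plan is to show that the classifying map $f\colon BU(n+1)\to B\aut(\gamma^1)$ of the universal projectivization induces an isomorphism on rational cohomology, and then to invoke the rational Whitehead theorem. First I would record the formal inputs. Since $c^*(\gamma^1)\not\cong\gamma^1$, we have $\Gamma(\gamma^1)=0$, so \eqref{eq:cover} exhibits a weak equivalence $B\aut_\circ(\gamma^1)\xrightarrow{\sim} B\aut(\gamma^1)$; the target is a homotopy orbit of a connected space by a connected monoid (Theorem \ref{thm:decorated}), hence simply connected, and it has finite-type rational cohomology by Theorem \ref{thm:pu}, so $B\aut(\gamma^1)$ is a simply connected space of finite $\QQ$-type. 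Moreover $H^*(B\aut(\gamma^1);\QQ)=\QQ[e_{|0},a_2,\ldots,a_{n+1}]$ and $H^*(BU(n+1);\QQ)=\QQ[c_1,\ldots,c_{n+1}]$ are polynomial rings on $n+1$ generators sitting in degrees $2,4,\ldots,2(n+1)$ in both cases, so the two rings have the same (finite) dimension in every degree.

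Next I would compute $f^*$. Because $a_i$ and $e_{|0}$ are characteristic classes of $\gamma^1$-fibrations, $f^*(a_i)=a_i(\pi)$ and $f^*(e_{|0})=e_{|0}(\pi,L)$, where $(\pi,L)$ is the projectivization of the universal complex $(n+1)$-plane bundle $E$ over $BU(n+1)$. Proposition \ref{prop:a vs chern} then evaluates these explicitly in terms of the universal Chern classes $c_i=c_i(E)$, namely $f^*(e_{|0})=\tfrac{1}{n+1}c_1$ and $f^*(a_i)=\sum_{j=0}^{i}(-1)^j\binom{n+1-i+j}{j}c_{i-j}\big(\tfrac{c_1}{n+1}\big)^j$ for $i=2,\ldots,n+1$.

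Surjectivity of $f^*$ is then immediate from the complementary identity \eqref{eq:Chern classes}: applied to the universal bundle it reads $c_i=\sum_{j=0}^{i}\binom{n+1-i+j}{j}\,f^*(a_{i-j})\,f^*(e_{|0})^j$, so every generator $c_i$ of the target lies in the image of $f^*$. A degree-preserving $\QQ$-linear surjection between graded vector spaces of equal finite dimension in every degree is an isomorphism, so by the dimension count in the first step $f^*$ is an isomorphism of graded rings. Since $f^*$ is an isomorphism on $H^*(-;\QQ)$ and both $BU(n+1)$ and $B\aut(\gamma^1)$ are simply connected of finite $\QQ$-type, $f$ is a rational homotopy equivalence. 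I do not anticipate a genuine obstacle here: everything is packaged in Theorem \ref{thm:pu} and Proposition \ref{prop:a vs chern}, and the only point deserving care is the deduction of bijectivity of $f^*$ from surjectivity, which is exactly why it is worth noting the matching Poincar\'e series before running the computation.
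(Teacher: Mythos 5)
Your proposal is correct and follows essentially the same route as the paper: the paper's proof simply observes that equations \eqref{eq:first chern class}, \eqref{eq:a-classes}, and \eqref{eq:Chern classes} of Proposition \ref{prop:a vs chern} give explicit formulas for the induced map in rational cohomology and for its inverse, using the identification $H^*(B\aut(\gamma^1);\QQ)=\QQ[a_2,\ldots,a_{n+1},e_{|0}]$ from Theorem \ref{thm:pu}. Your variant of deducing bijectivity from surjectivity plus matching Poincar\'e series, rather than exhibiting the inverse directly, is an equally valid packaging of the same computation.
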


\begin{proof}
The equations \eqref{eq:first chern class}, \eqref{eq:a-classes}, and \eqref{eq:Chern classes} give explicit formulas for the induced map in rational cohomology and its inverse.
\end{proof}

\begin{proposition}  \label{prop:ucd}
Every $U(n+1)$-bundle with fiber $\CP^n$ has trivial Chern differences.
\end{proposition}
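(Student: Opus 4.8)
The plan is to reduce everything to an identity for elementary symmetric polynomials via the splitting principle. Every $U(n+1)$-bundle with fiber $\CP^n$ is the fiberwise projectivization $\PP(E)$ of a complex vector bundle $E\to B$ of rank $n+1$ (take $E = P\times_{U(n+1)}\mathbb{C}^{n+1}$ for $P$ the associated principal bundle), so it suffices to treat $\pi\colon\PP(E)\to B$ with total bundle the fiberwise tangent bundle $\zeta$. Introduce Chern roots $x_1,\dots,x_{n+1}$ of $E$ and write $\sigma_j$ for the $j$-th elementary symmetric polynomial, so $c_j(E)=\sigma_j(x_1,\dots,x_{n+1})$; set $\bar\ell = c_1(\overline L)\in H^2(\PP(E);\QQ)$. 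From the isomorphism $\epsilon^1\oplus\zeta \cong \overline L\tensor \pi^*(E)$ established above, the classes $y_k := \bar\ell + x_k$ are the Chern roots of $\epsilon^1\oplus\zeta$, whence $c_i(\zeta) = \sigma_i(y_1,\dots,y_{n+1})$ for all $i$, with $\sigma_{n+1}(y_1,\dots,y_{n+1}) = \prod_k(\bar\ell + x_k) = 0$ being precisely \eqref{eq:projective bundle formula}.

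The crucial step is to identify the classes $a_i(\pi)$. By \eqref{eq:first chern class} we have $\omega_{fw}(\pi) = \bar\ell + \tfrac{1}{n+1}c_1(E) = \tfrac{1}{n+1}\sum_{k}y_k$. Put $z_k := y_k - \omega_{fw}(\pi) = x_k - \tfrac{1}{n+1}c_1(E)$; then $\sigma_j(z_1,\dots,z_{n+1})$ is a symmetric polynomial in the $x_k$, hence a polynomial in the $c_j(E)$ lying in $H^*(B;\QQ)$, and $\sum_k z_k = 0$, i.e.\ $\sigma_1(z)=0$. Expanding $0 = \prod_k(z_k + \omega_{fw}(\pi)) = \sum_{m=0}^{n+1}\sigma_{n+1-m}(z)\,\omega_{fw}(\pi)^m$ and using $\sigma_1(z)=0$ yields
$$\omega_{fw}(\pi)^{n+1} + \sigma_2(z)\,\omega_{fw}(\pi)^{n-1} + \dots + \sigma_{n+1}(z) = 0.$$
Since $1,\omega_{fw}(\pi),\dots,\omega_{fw}(\pi)^n$ is an $H^*(B;\QQ)$-module basis for $H^*(\PP(E);\QQ)$ and the displayed relation is monic with vanishing $\omega_{fw}(\pi)^n$-coefficient, it coincides with the relation defining the $a_i(\pi)$; comparing coefficients gives $a_i(\pi) = \sigma_i(z_1,\dots,z_{n+1})$ for $i=0,1,\dots,n+1$.

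Finally, apply the elementary identity $\sigma_i(z_1+u,\dots,z_{n+1}+u) = \sum_{j=0}^{i}\binom{n+1-j}{i-j}\sigma_j(z_1,\dots,z_{n+1})\,u^{i-j}$ (obtained, e.g., from $\prod_k\bigl(1+t(z_k+u)\bigr) = (1+tu)^{n+1}\prod_k\bigl(1 + \tfrac{t}{1+tu}z_k\bigr)$) with $u=\omega_{fw}(\pi)$: the left-hand side is $\sigma_i(y) = c_i(\zeta)$, while the right-hand side equals $\sum_{j=0}^{i}\binom{n+1-j}{i-j}a_j(\pi)\,\omega_{fw}(\pi)^{i-j} = c_i^{fw}(\pi)$ by \eqref{eq:fiberwise Chern classes}. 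Hence $cd_i(\pi,\zeta) = c_i^{fw}(\pi) - c_i(\zeta) = 0$ for all $i$ (for $i=n+1$ both sides vanish, by the defining relation of the $a_i$ and because $\zeta$ has rank $n$; for $i>n+1$ there is nothing to check). I expect the only delicate point to be the bookkeeping in the middle paragraph—checking that the $\sigma_j(z)$ are genuinely pulled back from $B$ and that the monic relation one obtains is \emph{literally} the one defining the $a_i(\pi)$—after which the rest is formal; this is also the argument that justifies the formula \eqref{eq:fiberwise Chern classes} (``the computation in Proposition \ref{prop:ucd}''). The same conclusion can alternatively be reached by direct substitution of \eqref{eq:a-classes} and \eqref{eq:chern classes} into \eqref{eq:fiberwise Chern classes}, but the symmetric-function argument exhibits the mechanism cleanly.
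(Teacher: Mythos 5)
Your proof is correct. The geometric skeleton is the same as the paper's: reduce to the projectivization $\PP(E)$ of a rank-$(n+1)$ vector bundle, and exploit the isomorphism $\epsilon^1\oplus\zeta\cong\overline{L}\tensor\pi^*(E)$ together with the identification $\omega_{fw}(\pi)=e(\overline{L})+\tfrac{1}{n+1}c_1(E)$. Where you diverge is in the algebra. The paper substitutes $e(\overline{L})=\omega_{fw}(\pi)-e_{|0}(\pi,L)$ directly into \eqref{eq:chern classes}, reorders a double sum using the identity $\binom{n+1-i+j}{j}\binom{j}{k}=\binom{n+1-i+k}{k}\binom{n+1-i+j}{j-k}$, and then recognizes the inner sum as $a_{i-k}(\pi)$ via \eqref{eq:a-classes}. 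You instead pass to Chern roots, center them as $z_k=x_k-\tfrac{1}{n+1}c_1(E)$ so that $\sigma_1(z)=0$, read off $a_i(\pi)=\sigma_i(z)$ directly from the uniqueness of the monic relation in $\omega_{fw}(\pi)$ (thereby reproving \eqref{eq:a-classes} rather than quoting it), and package the binomial bookkeeping into the generating-function identity for shifted elementary symmetric polynomials. Both routes use the same inputs; yours buys a conceptual explanation of why the coefficients $\binom{n+1-j}{i-j}$ appear in \eqref{eq:fiberwise Chern classes} — they are exactly the shift coefficients for $\sigma_i$ in $n+1$ variables — at the modest cost of invoking the splitting principle, while the paper's version is self-contained at the level of Chern classes. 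All the steps you flag as delicate (that the $\sigma_j(z)$ are pulled back from $B$, and that your monic relation is literally the defining one for the $a_i$ by the $H^*(B;\QQ)$-module basis property) do go through as you state them.
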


\begin{proof}
Every $U(n+1)$-bundle with fiber $\CP^n$ is equivalent to the projectivization of some complex $(n+1)$-dimensional vector bundle $p\colon E\to B$
(e.g.~the complex vector bundle constructed from the associated principal $U(n+1)$-bundle), so we may as well assume the bundle is of the form
$$\CP^n \to \mathbb{P}(E) \xrightarrow{\pi} B.$$
Let $\zeta = T_\pi \PP(E)$ denote the fiberwise tangent bundle over $\mathbb{P}(E)$ and let $L$ be the canonical line bundle over $\PP(E)$.
By inserting
$$e(\overline{L}) = \omega_{fw}(\pi) - e_{|0}(\pi,L)$$
in the formula \eqref{eq:chern classes}, we obtain
\begin{align*}
c_i(\zeta) & = \sum_{j=0}^i \binom{n+1-i+j}{j} c_{i-j}(E) \left(\omega_{fw}(\pi) - e_{|0}(\pi,L)\right)^j \\
& = \sum_{j=0}^i \sum_{k=0}^j (-1)^{j-k}\binom{n+1-i+j}{j}\binom{j}{k} c_{i-j}(E) e_{|0}(\pi,L)^{j-k} \omega_{fw}(\pi)^k.
\end{align*}
Using the identity
$$
\binom{n+1-i+j}{j}\binom{j}{k} = \binom{n+1-i+k}{k} \binom{n+1-i+j}{j-k},
$$
and changing the order of summation, the above may be written as
$$
\sum_{k=0}^i \binom{n+1-i+k}{k} \left( \sum_{j=k}^i (-1)^{j-k} \binom{n+1-i+j}{j-k}c_{i-j}(E) e_{|0}(\pi,L)^{j-k} \right) \omega_{fw}(\pi)^k.
$$
By \eqref{eq:a-classes} and \eqref{eq:first chern class}, we recognize the inner sum as $a_{i-k}(\pi)$, so the above is equal to
$$c_i^{fw}(\pi) = \sum_{k=0}^i \binom{n+1-i+k}{k} a_{i-k}(\pi)\omega_{fw}(\pi)^k.$$
\end{proof}

Let $c=(c_i)$ be a sequence of cohomology classes $c_i \in H^{\ell_i}(X;\QQ)$. It is represented by a map
$$X\to \prod_i K(\QQ,\ell_i).$$
By interpreting the product of Eilenberg-Mac Lane spaces, call it $K$, as the classifying space $B\Omega K$, we can think of the sequence of cohomology classes $(c_i)$ as an equivalence class of a `bundle' $\xi_c$ over $X$ with `structure group' $\Omega K$, and the classes $c_i$ as the characteristic classes of $\xi_c$. A $\xi_c$-fibration
$$X\to E\xrightarrow{\pi} B,\quad \zeta \to E,$$
may be thought of as a fibration together with cohomology classes $c_i(\zeta)\in H^{\ell_i}(E;\QQ)$ that restricts to $c_i$ in the fiber.

Now let $c$ be the sequence of Chern classes of $\CP^n$.
We define the classifying space of $\tau_{\CP^n}$-fibrations with trivialized Chern differences as the homotopy pullback
\begin{equation} \label{eq:bc}
\begin{gathered}
\xymatrix{B\aut(\tau_{\CP^n})^c \ar[d] \ar[r] & B\aut_\circ(\CP^n) \ar[d]^-{c^{fw}} \\
B\aut(\tau_{\CP^n}) \ar[r]^-{c^{tot}} & B\aut(\xi_c),}
\end{gathered}
\end{equation}
where $c^{fw}$ arises by taking the fiberwise Chern classes of an orientable $\CP^n$-fibration and $c^{tot}$ by taking the Chern classes of the total bundle of a $\tau_{\CP^n}$-fibration.

\begin{theorem} \label{thm:tcd}
Every $PU(n+1)$-bundle with fiber $\CP^n$ has trivial Chern differences and the induced map $BPU(n+1) \to B\aut(\tau_{\CP^n})^c$ is a rational equivalence.
\end{theorem}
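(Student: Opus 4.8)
The plan is to deduce the vanishing of the Chern differences from the vector bundle case already handled in Proposition~\ref{prop:ucd}, and then to check that after rationalization the homotopy pullback \eqref{eq:bc} collapses, reducing the assertion to the statement that the $a$-classes of the universal $PU(n+1)$-bundle generate $H^*(BPU(n+1);\QQ)$. For the first part, naturality of the Chern differences under pullback reduces us to the universal $PU(n+1)$-bundle over $BPU(n+1)$. The map $q\colon BU(n+1)\to BPU(n+1)$ induced by $U(n+1)\to PU(n+1)$ classifies the projectivization $\PP(\gamma^{n+1})$ of the universal complex $(n+1)$-plane bundle, a $U(n+1)$-bundle; so by Proposition~\ref{prop:ucd} (whose proof in fact exhibits $c_i^{fw}=c_i(\zeta)$ as an identity of explicit cohomology classes for any projectivization) we get $q^*(cd_i)=0$. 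But $q^*$ is rationally injective: factoring $q$ through $BSU(n+1)$, the composite $BSU(n+1)\to BU(n+1)\to BPU(n+1)$ is induced by the isogeny $SU(n+1)\to SU(n+1)/\mu_{n+1}=PU(n+1)$ and hence is a rational equivalence. Therefore $cd_i=0$ in $H^*(BPU(n+1);\QQ)$, and by naturality every $PU(n+1)$-bundle with fiber $\CP^n$ has trivial Chern differences. The identity $c_i^{fw}=c_i(\zeta)$ for the universal $PU(n+1)$-bundle also furnishes the lift $BPU(n+1)\to B\aut(\tau_{\CP^n})^c$ (working rationally if needed: $B\aut(\xi_c)$ has polynomial, hence intrinsically formal, rational cohomology, so rationally a product of Eilenberg--MacLane spaces, and the two composites $BPU(n+1)\to B\aut(\xi_c)$ in \eqref{eq:bc} record the same classes).

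Next I would show that $c^{tot}\colon B\aut(\tau_{\CP^n})\to B\aut(\xi_c)$ is a rational equivalence. Applying Theorem~\ref{thm:pu} with $G=U(n)$ (here $H^*(BU(n);\QQ)=\QQ[c_1,\dots,c_n]$ and $\Gamma(\tau_{\CP^n})=0$, since $c^*\tau_{\CP^n}\cong\overline{\tau_{\CP^n}}$ and already $c_1(\overline{\tau_{\CP^n}})=-(n+1)\omega\ne c_1(\tau_{\CP^n})$), and with $G=\Omega K$ for $K=\prod_{i=1}^n K(\QQ,2i)$ (here $H^*(BG;\QQ)=\QQ[p_1,\dots,p_n]$ and $\Gamma(\xi_c)=0$), one finds that $H^*(B\aut(\tau_{\CP^n});\QQ)$ and $H^*(B\aut(\xi_c);\QQ)$ are both polynomial on $a_2,\dots,a_{n+1}$ together with $c_{i|j}$, resp.\ $p_{i|j}$, for $1\le i\le n$, $0\le j<i$; and $c^{tot}$ keeps the underlying $\CP^n$-fibration while recording the Chern classes of the universal total bundle, whose $\omega_{fw}$-expansion coefficients are precisely the $c_{i|j}$. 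Thus $c^{tot}$ carries $a_k\mapsto a_k$, $p_{i|j}\mapsto c_{i|j}$, so it induces an isomorphism on rational cohomology between rationally simply connected spaces of finite type, hence is a rational equivalence. Base-changing \eqref{eq:bc} along $c^{fw}$, the map $B\aut(\tau_{\CP^n})^c\to B\aut_\circ(\CP^n)$ has the same, rationally trivial, homotopy fibre as $c^{tot}$, so it too is a rational equivalence; in particular $H^*(B\aut(\tau_{\CP^n})^c;\QQ)\cong\QQ[a_2,\dots,a_{n+1}]$.

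Finally I would identify the composite $BPU(n+1)\to B\aut(\tau_{\CP^n})^c\to B\aut_\circ(\CP^n)$, which classifies the orientable $\CP^n$-fibration underlying the universal $PU(n+1)$-bundle. Pulling back along the rational equivalence $BSU(n+1)\to BPU(n+1)$ turns that fibration into the projectivization $\PP(\gamma^{n+1}_{SU})$ of the universal rank-$(n+1)$ bundle with $c_1=0$, and then \eqref{eq:a-classes} degenerates to $a_i=c_i(\gamma^{n+1}_{SU})$ for $i=2,\dots,n+1$. These are polynomial generators of $H^*(BSU(n+1);\QQ)=H^*(BPU(n+1);\QQ)$, so the composite induces the isomorphism $\QQ[a_2,\dots,a_{n+1}]\xrightarrow{\ \cong\ }\QQ[c_2,\dots,c_{n+1}]$, $a_i\mapsto c_i$, and is a rational equivalence. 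Two-out-of-three applied with the previous paragraph then gives that $BPU(n+1)\to B\aut(\tau_{\CP^n})^c$ is a rational equivalence.

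The main obstacle is the second step: making precise that forgetting the total bundle of a $\tau_{\CP^n}$-fibration down to the Chern classes of that bundle is a rational equivalence of classifying spaces. This amounts to matching up the polynomial generators produced by three separate applications of Theorem~\ref{thm:pu} --- for $U(n)$, for the ``formal'' structure group $\Omega K$, and (for $B\aut_\circ(\CP^n)$) for the trivial structure group --- and checking that the maps $c^{tot}$, $c^{fw}$ of \eqref{eq:bc} carry them to one another as claimed. Once this bookkeeping is done the remaining arguments are short.
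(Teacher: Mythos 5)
Your proposal is correct and rests on the same pillars as the paper's proof---Theorem \ref{thm:pu} for the cohomology of the classifying spaces, Proposition \ref{prop:ucd} for the $U(n+1)$-case, the formulas of Proposition \ref{prop:a vs chern}, and the rational comparison of $BU(n+1)$ with $BPU(n+1)$---but you organize the endgame differently. The paper computes $\ker\big(q^*\colon H^*(B\aut(\tau_{\CP^n});\QQ)\to H^*(BU(n+1);\QQ)\big)$ explicitly, identifies it with the ideal $I$ cutting out $B\aut(\tau_{\CP^n})^c$, deduces simultaneously that the universal $PU(n+1)$-bundle has trivial Chern differences and that the induced map on the quotient is injective, and concludes with a count of generators: both sides are polynomial rings on generators in degrees $4,6,\ldots,2n+2$. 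You instead (i) get triviality of the Chern differences from naturality, Proposition \ref{prop:ucd}, and rational injectivity of $H^*(BPU(n+1))\to H^*(BU(n+1))$ --- a small imprecision here is that the differences $cd_i$ live in the cohomology of the \emph{total space}, so the injectivity you need is there, but it follows from the injectivity on bases via the $\omega_{fw}$-module basis; (ii) identify $H^*(B\aut(\tau_{\CP^n})^c;\QQ)$ by checking that $c^{tot}$ matches the polynomial generators produced by two applications of Theorem \ref{thm:pu}, so the pullback square collapses rationally onto $B\aut_\circ(\CP^n)$ (the paper reaches the same conclusion by quotienting by $I$; see Remark \ref{rmk:bpun}); and (iii) replace the generator count by the observation that over $BSU(n+1)$ formula \eqref{eq:a-classes} degenerates to $a_i=c_i$, exhibiting $BPU(n+1)\to B\aut_\circ(\CP^n)$ as an explicit rational isomorphism and finishing by two-out-of-three. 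Step (iii) is the genuinely different (and arguably more transparent) ingredient: it produces the inverse isomorphism directly rather than inferring surjectivity from injectivity plus equality of Poincar\'e series. The bookkeeping you flag as the main obstacle is exactly what Theorem \ref{thm:pu} delivers, and your reading of $c^{tot}$ and $c^{fw}$ on generators ($a_k\mapsto a_k$, $p_{i|j}\mapsto c_{i|j}$, resp.\ $p_{i|j}\mapsto\binom{n+1-i+j}{j}a_{i-j}$) is the intended one.
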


\begin{proof}
By Theorem \ref{thm:pu}, the ring of characteristic classes of $\tau_{\CP^n}$-fibrations is the polynomial ring
$$H^*(B\aut(\tau_{\CP^n});\QQ) = \QQ[a_2,\ldots,a_{n+1},c_{i|j}],$$
where we have one generator $c_{i|j}$ for each pair of integers $(i,j)$ such that $1\leq i \leq n$ and $0\leq j<i$.
By definition of the characteristic classes $c_{i|j}$, we have
$$c_i(\zeta) = \sum_{j=0}^n c_{i|j}(\pi,\zeta) \cdot \omega_{fw}(\pi)^j$$
for every $\tau_{\CP^n}$-fibration $(\pi,\zeta)$. By comparing coefficients, we see that the equation $c_i(\zeta) = c_i^{fw}(\pi)$ is equivalent to the equations
\begin{equation} \label{eq:kernel}
c_{i|j}(\pi,\zeta) = \binom{n+1-i+j}{j}a_{i-j}(\pi)
\end{equation}
for $0\leq j < i$.

It follows that the cohomology ring of $B\aut(\tau_{\CP^n})^c$ is isomorphic to
$$H^*(B\aut(\tau_{\CP^n})^c;\QQ) = \QQ[a_2,\ldots,a_{n+1},c_{i|j}]/I \cong \QQ[a_2,\ldots,a_{n+1}],$$
where $I$ is the ideal generated by the linear polynomials
$$c_{i|j} - \binom{n+1-i+j}{j}a_{i-j}$$
for $1\leq i \leq n$ and $0\leq j < i$.
We note in passing that this implies that the map $B\aut(\tau_{\CP^n})^c \to B\aut_\circ(\CP^n)$ is a rational equivalence.

By \eqref{eq:a-classes}, Proposition \ref{prop:ucd} and \eqref{eq:kernel}  applied to the universal $U(n+1)$-bundle with fiber $\CP^n$,
the map in rational cohomology induced by
$$q\colon BU(n+1) \to B\aut(\tau_{\CP^n})$$
may be identified with
$$
q^*\colon \QQ[a_2,\ldots,a_{n+1},c_{i|j}] \to \QQ[c_1,\ldots,c_{n+1}],
$$
where
\begin{gather*}
q^*(a_i) = \sum_{j=0}^i (-1)^j\binom{n+1-i+j}{j}c_{i-j} \left(\frac{c_1}{n+1}\right)^j, \\
q^*(c_{i|j}) = \binom{n+1-i+j}{j}q^*(a_{i-j}).
\end{gather*}
It follows that
$$\ker(q^*) = I.$$
Now, observe that $q$ factors as
$$\xymatrix{BU(n+1) \ar[dr]_-q \ar[r]^-f & BPU(n+1) \ar[d]^-r \\ & B\aut(\tau_{\CP^n}).}$$
The map $f$ is injective in rational cohomology, so
$$\ker(r^*) = \ker(f^*r^*) = \ker(q^*) = I.$$
This implies that the universal $PU(n+1)$-bundle with fiber $\CP^n$ has trivial Chern differences, and that the induced map
$$H^*(B\aut(\tau_{\CP^n})^c;\QQ) \to H^*(BPU(n+1);\QQ)$$
is injective. Since both source and target are abstractly isomorphic to a polynomial ring with generators in degrees $4,6,\ldots,2n+2$, the map must be an isomorphism.
\end{proof}

\begin{remark} \label{rmk:bpun}
The above shows in particular that each map in
$$BPU(n+1) \to B\aut(\tau_{\CP^n})^c \to B\aut_\circ(\CP^n)$$
is a rational equivalence. That $BPU(n+1) \to B\aut_{\circ}(\CP^n)$ is a rational equivalence has been observed before, cf.~\cite{Sasao,Kuribayashi,Prigge,KM}.
\end{remark}

In the calculations that follow, we will use the abbreviations $\omega_{fw} = \omega_{fw}(\pi)$, $a_i= a_i(\pi)$, $p_{i|j} = p_{i|j}(\pi,\zeta)$, etc, when there is no risk of confusion.
\begin{lemma} \label{lemma:pf}
For every orientable $\CP^n$-fibration $\pi\colon E \to B$ we have
\begin{gather}
\pi_!(\omega_{fw}^n) = 1, \\
\pi_!(\omega_{fw}^{n+k}) + a_k \in \aideal^2,
\end{gather}
for $1\leq k \leq n+1$, and
\begin{equation*}
\pi_!(\omega_{fw}^{n+k}) \in \aideal^2
\end{equation*}
for $k>n+1$.
Here, $\aideal \subseteq H^*(B;\QQ)$ is the ideal generated by $a_2,\ldots,a_{n+1}$.
\end{lemma}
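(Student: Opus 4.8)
The plan is to reduce all three statements to a single recursion coming from the defining relation for the classes $a_i$ together with the fact that $\pi_!$ is a morphism of $H^*(B;\QQ)$-modules. Set $b_\ell = \pi_!(\omega_{fw}^{\,n+\ell}) \in H^{2\ell}(B;\QQ)$, so that $b_\ell = 0$ for $\ell<0$ for degree reasons, and $b_0 = \pi_!(\omega_{fw}^{\,n})$. The first assertion, $b_0 = 1$, is the chosen normalization: restricting to a fibre turns it into $\int_{\CP^n}\omega^n$, which equals $1$ for $\omega = -c_1(\gamma^1)$ (cf.~\cite[p.170]{MS}), and compatibility of $\pi_!$ with base change propagates this identity over $H^0(B;\QQ)$. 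For the remaining parts I would multiply the defining equation $\sum_{j=0}^{n+1} a_j\,\omega_{fw}^{\,n+1-j} = 0$ in $H^*(E;\QQ)$ (with $a_0 = 1$, $a_1 = 0$, and the $a_j$ understood as the pullbacks $\pi^*(a_j)$) by $\omega_{fw}^{\,k-1}$ and apply $\pi_!$; the projection formula then gives, for every $k\ge 1$,
$$b_k = -\sum_{j=2}^{n+1} a_j\, b_{k-j}.$$

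Next I would prove by induction on $k\ge 1$ the combined statement: $b_k + a_k \in \aideal^2$ if $1\le k\le n+1$, and $b_k \in \aideal^2$ if $k\ge n+2$; in either case this forces $b_k \in \aideal$, which is what feeds the induction. In the inductive step I split the right-hand side of the recursion according to the sign of $k-j$. Terms with $k-j<0$ vanish. The term with $k-j=0$ occurs precisely when $2\le k\le n+1$, and it equals $-a_k b_0 = -a_k$, where the normalization $b_0 = 1$ enters essentially. Each remaining term has $1\le k-j<k$, so $b_{k-j}\in\aideal$ by the inductive hypothesis while $a_j\in\aideal$ since $j\ge 2$; hence $a_j b_{k-j}\in\aideal^2$. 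Summing the contributions yields $b_k\equiv -a_k\pmod{\aideal^2}$ when $2\le k\le n+1$, $b_1 = 0 = -a_1$ (all summands vanish), and $b_k\in\aideal^2$ when $k\ge n+2$. These are exactly the three claims of the lemma.

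I do not expect a substantial obstacle: the argument rests only on the projection formula for $\pi_!$ and a bookkeeping induction. The points demanding care are to have the normalization $\pi_!(\omega_{fw}^{\,n}) = 1$ in place before running the induction — it is what produces the clean $-a_k$ rather than a nonzero multiple of it — and to keep the conventions $a_0 = 1$, $a_1 = 0$ straight, so that the $j=0$ term of the recursion is $b_k$ itself and the $j=1$ term drops out. One should also remember that the $a_j$ appearing inside $H^*(E;\QQ)$ are the pullbacks $\pi^*(a_j)$, which is precisely what lets them be pulled past $\pi_!$.
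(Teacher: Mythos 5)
Your proof is correct and follows exactly the route the paper indicates (multiply the defining relation $\sum_j a_j\omega_{fw}^{n+1-j}=0$ by powers of $\omega_{fw}$, push forward, and induct); the paper simply omits the details that you supply. The recursion $b_k=-\sum_{j\ge 2}a_jb_{k-j}$ together with the normalization $b_0=1$ and the induction showing $b_\ell\in\aideal$ for $\ell\ge 1$ is a clean and complete write-up of the omitted bookkeeping.
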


\begin{proof}
Multiply both sides of the equation
$$\omega_{fw}^{n+1} + a_2 \cdot \omega_{fw}^{n-1} + \ldots + a_{n+1} = 0$$
with suitable powers of $\omega_{fw}$ and apply $\pi_!$. We omit the details.
\end{proof}

\begin{proof}[Proof of Theorem \ref{thm:extended tautological}]
Let $(\pi,\zeta)$ be the universal $\xi$-fibration over $B\aut_\circ(\xi)$.
By Lemma \ref{lemma:pf}, the class $\kappa_{\omega^{n+k}}$ is a polynomial in $a_2,\ldots,a_{n+1}$ with linear term $-a_k$.
This implies that the classes $\kappa_{\omega^{n+2}},\ldots, \kappa_{\omega^{2n+1}}$ are algebraically independent and that they generate the same subring as $a_2,\ldots,a_{n+1}$. Say $|p_i|=2r_i$. By definition of $p_{i|j}$, we have
$$p_i(\zeta) = \sum_{j=0}^n p_{i|j} \omega_{fw}^j,$$
where $p_{i|j} = 0$ if $j>r$ for degree reasons and $p_{i|r}$ is a rational number. By multiplying the above with $\omega_{fw}^{n-j}$ and applying $\pi_!$, we obtain the equality
$$\kappa_{\omega^{n-j} p_i} = p_{i|j} + p_{i|j+2} \kappa_{\omega^{n+2}} + \ldots + p_{i|r} \kappa_{\omega^{n-j+r}}.$$
This can be used to express $p_{i|j}$ in terms of the extended $\kappa$-classes by descending induction on $j=r-1,r-2,\ldots, 0$. The equality can also be used to show algebraic independence of the extended $\kappa$-classes by observing that the linear term of $\kappa_{\omega^{n-j} p_i}$ is $p_{i|j}$, modulo the subspace spanned by $a_2,\ldots, a_{n+1}$.
\end{proof}

We now turn to the proof of Theorem \ref{thm:taut orient}.

First, we observe that the fiberwise Pontryagin classes and the fiberwise Euler class can be defined for every $\CP^n$-fibration $\pi\colon E\to B$ such that $\pi_1(B)$ acts on the fiber by orientation preserving homotopy equivalences (this is weaker than ``orientability'' of the fibration for $n$ even, but equivalent to it for $n$ odd). The reason is that we may identify
$$H^*(\CP^n\dquot aut_+(\CP^n);\QQ) = H^*(\CP^n\dquot aut_\circ(\CP^n);\QQ)^{\ZZ/2\ZZ}$$
where $\ZZ/2\ZZ$ acts by $(-1)$ in degrees congruent to $2$ modulo $4$ (cf.~Remark \ref{rmk:veronese}). The classes $p_i^{fw}(\pi)$ and $e^{fw}(\pi)$ are invariant since their degrees are divisible by $4$ when $n$ is even.

We define the classifying space for $\tau_{\CP^n}^\RR$-fibrations with trivialized Pontryagin and Euler differences as a homotopy pullback
\begin{equation} \label{eq:bpe}
\begin{gathered}
\xymatrix{B\aut(\tau_{\CP^n}^\RR)^{p,e} \ar[r] \ar[d] & B\aut_+(\CP^n) \ar[d] \\ 
B\aut(\tau_{\CP^n}^\RR) \ar[r] & B\aut(\xi_{p,e})}
\end{gathered}
\end{equation}
similar to \eqref{eq:bc}, where $\xi_{p,e}$ is the cohomological bundle over $\CP^n$ associated to the Pontryagin classes and the Euler class of $\tau_{\CP^n}^\RR$.

The bottom horizontal map in \eqref{eq:bpe} is a rational equivalence, because by Theorem \ref{thm:universal xi-fibration} it may be identified with
$$\map(\CP^n,BSO(2n))_{\tau_{\CP^n}^\RR} \dquot \aut_+(\CP^n) \to \map(\CP^n,K_{p,e})_{p,e} \dquot \aut_+(\CP^n),$$
and the map
$$BSO(2n)\to K_{p,e} = \prod_{i=1}^{n-1} K(\QQ,4i) \times K(\QQ,2n)$$
recording the universal Pontryagin and Euler classes is a rational equivalence.
It follows that the top horizontal map in \eqref{eq:bpe} is a rational equivalence.

We proceed to make computations over $B\aut_+(\CP^n)$ using the fiberwise Pontryagin and Euler classes.
By Theorem \ref{thm:pu} applied to the orientation bundle,
$$H^*(B\aut_+(\CP^n);\QQ) = \QQ[a_2,\ldots,a_{n+1}]^{\Gamma},$$
where $\Gamma$ is trivial if $n$ is odd and cyclic of order $2$, acting by $a_k\mapsto (-1)^k a_k$, if $n$ is even.

\begin{lemma} \label{lemma:congruences}
Let $\pi\colon E\to B$ be a $\CP^n$-fibration such that $\pi_1(B)$ acts by orientation preserving homotopy equivalences on the fiber. Let $\aideal\subseteq H^*(B;\QQ)$ denote the ideal generated by $a_2,\ldots,a_{n+1}$. We have that
\begin{align*}
\kappa_e & = n+1, \\
\kappa_{ep_1} & = -4(n+1)a_2, \\
\kappa_{ep_1^\ell} & \equiv -2\ell(n+1)^\ell a_{2\ell} \pmod{\aideal^2}, && 4\leq 2\ell\leq n+1,\\
\kappa_{p_1^\ell} & \equiv -(n+1)^\ell a_{2\ell-n} \pmod{\aideal^2},&&  2 < 2\ell-n \leq n+1,\\
\kappa_{p_1^\ell} & = -(2n+3)(n+1)^{\ell-1} a_2, && 2\ell-n=2.
\end{align*}
\end{lemma}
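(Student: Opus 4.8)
The plan is to reduce everything to the pushforward identities of Lemma~\ref{lemma:pf} after writing $e^{fw}(\pi)$ and $p_1^{fw}(\pi)$ explicitly in terms of $\omega_{fw}(\pi)$ and the classes $a_j(\pi)$. First I would compute, straight from the definition \eqref{eq:fiberwise Chern classes} of the fiberwise Chern classes together with the definitions of $p_i^{fw}$ and $e^{fw}$, and using $a_0=1$, $a_1=0$, that
$$c_1^{fw} = (n+1)\omega_{fw},\qquad c_2^{fw} = \binom{n+1}{2}\omega_{fw}^2 + a_2,\qquad e^{fw} = c_n^{fw} = (n+1)\omega_{fw}^n + \sum_{j=2}^n (n+1-j)\,a_j\,\omega_{fw}^{n-j},$$
and hence $p_1^{fw} = (c_1^{fw})^2 - 2c_2^{fw} = (n+1)\omega_{fw}^2 - 2a_2$. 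Since every $a_j$ lies in $\aideal$, the binomial theorem gives $(p_1^{fw})^\ell \equiv (n+1)^\ell \omega_{fw}^{2\ell} - 2\ell(n+1)^{\ell-1}a_2\,\omega_{fw}^{2\ell-2} \pmod{\aideal^2}$, and more generally any product of the fiberwise classes becomes, modulo $\aideal^2$, a linear combination of the $\omega_{fw}^k$ with coefficients in $\QQ \oplus \QQ\langle a_2,\ldots,a_{n+1}\rangle$.

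The $\kappa$-classes are then computed by multiplying $e^{fw}$ into $(p_1^{fw})^\ell$ (or taking $(p_1^{fw})^\ell$ alone), reducing mod $\aideal^2$ so that every cross-term carrying two of the $a_j$ is discarded, and applying $\pi_!$ term by term via Lemma~\ref{lemma:pf}: $\pi_!(\omega_{fw}^j)=0$ for $j<n$ by degree, $\pi_!(\omega_{fw}^n)=1$, $\pi_!(\omega_{fw}^{n+k}) \equiv -a_k \pmod{\aideal^2}$ for $1\le k\le n+1$, and $\pi_!(\omega_{fw}^{n+k})\in\aideal^2$ for $k>n+1$. For $\kappa_{ep_1^\ell}$ three groups of terms can survive: the leading term $(n+1)^{\ell+1}\omega_{fw}^{n+2\ell}$, contributing $-(n+1)^{\ell+1}a_{2\ell}$; from $\sum_j (n+1-j)a_j\,\omega_{fw}^{n-j+2\ell}$ only the summand $j=2\ell$ (present exactly when $2\le 2\ell\le n$), contributing $(n+1)^\ell(n+1-2\ell)a_{2\ell}$; and the term $-2\ell(n+1)^\ell a_2\,\omega_{fw}^{n+2\ell-2}$, whose pushforward is in $\aideal^2$ since $2\ell-2\ge1$. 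Summing the coefficients of $a_{2\ell}$ gives $-(n+1)^{\ell+1}+(n+1)^\ell(n+1-2\ell) = -2\ell(n+1)^\ell$; at the boundary value $2\ell=n+1$ (so $n$ odd) the middle summand is absent and the leading term alone gives $-(n+1)^{\ell+1}a_{n+1} = -2\ell(n+1)^\ell a_{2\ell}$, consistently. The cases $\kappa_{p_1^\ell}$ run the same way, now with leading term $(n+1)^\ell\omega_{fw}^{2\ell} = (n+1)^\ell\omega_{fw}^{\,n+(2\ell-n)}$ and cross-term $-2\ell(n+1)^{\ell-1}a_2\,\omega_{fw}^{2\ell-2}$: in the range $2 < 2\ell-n \le n+1$ the cross-term dies mod $\aideal^2$ and one is left with $-(n+1)^\ell a_{2\ell-n}$, whereas for $2\ell-n=2$ the cross-term pushes forward to $\pi_!(\omega_{fw}^n)=1$, producing the extra summand $-(n+2)(n+1)^{\ell-1}a_2$ and total $-(2n+3)(n+1)^{\ell-1}a_2$. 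Finally, $\kappa_e$ sits in degree $0$ (and is in fact already exactly $n+1$ since all other terms push forward to $0$ by degree), while $\kappa_{ep_1}$ and $\kappa_{p_1^\ell}$ with $2\ell-n=2$ sit in degree $4$, where $\aideal^2=0$; in these cases the congruences are equalities.

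It remains to justify the computation under the stated hypothesis: $\omega_{fw}(\pi)$ and the individual classes $a_j(\pi)$ are a priori defined only when the holonomy lies in $\aut_\circ(\CP^n)$, while we only assume orientation-preserving holonomy. I would handle this by pulling the fibration back along the double cover $B_\circ \to B$ corresponding to $\ker\big(\pi_1(B)\to\ZZ/2\ZZ\big)$, performing the computation over $B_\circ$, and then transferring the identities back using naturality of $\omega_{fw}$, of the $a_j$, and of $\pi_!$, together with the injectivity of $H^*(B;\QQ)\to H^*(B_\circ;\QQ)$ (as $B_\circ\to B$ is a rational homology double cover). The only real obstacle is the combinatorial bookkeeping in the middle step --- tracking, modulo $\aideal^2$, precisely which monomials in $\omega_{fw}$ survive $\pi_!$ and correctly collecting the resulting coefficients of $a_{2\ell}$ and $a_2$, with care at the boundary cases $2\ell=n$ and $2\ell=n+1$ --- but this is a direct, if slightly lengthy, calculation.
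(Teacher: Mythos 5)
Your proposal is correct and follows essentially the same route as the paper: the paper's proof simply records $p_1^{fw}=(n+1)\omega_{fw}^2-2a_2$ and $e^{fw}=(n+1)\omega_{fw}^n+(n-1)a_2\omega_{fw}^{n-2}+\ldots+a_n$, expands $(p_1^{fw})^\ell$ and $e^{fw}(p_1^{fw})^\ell$, and applies Lemma \ref{lemma:pf}, omitting the details that you have correctly supplied (including the boundary cases $2\ell=n+1$ and $2\ell-n=2$, and the degree-$4$ observation that turns the congruences into equalities there). Your extra care about passing to the orientation double cover is consistent with the paper's preceding discussion identifying $H^*(B\aut_+(\CP^n);\QQ)$ with an invariant subring, and is not a deviation in substance.
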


\begin{proof}
We have that
\begin{align*}
p_1^{fw} & = (n+1)\omega_{fw}^2 - 2a_2,\\
e^{fw} & = (n+1)\omega_{fw}^n + (n-1)a_2\omega_{fw}^{n-2} + \ldots + 2a_{n-1}\omega_{fw} + a_n.
\end{align*}
It is a simple matter to expand $\left(p_1^{fw}\right)^\ell$ and $e^{fw}\left(p_1^{fw}\right)^\ell$ and then use Lemma \ref{lemma:pf} to calculate their pushforwards. We omit the details.
\end{proof}

Assume $n$ is odd, say $n=2k+1$. Lemma \ref{lemma:congruences} shows that, modulo $\aideal^2$ and up to multiplication by non-zero scalars, the classes
$$\kappa_{ep_1},\, \, \kappa_{ep_1^2},\ldots,\kappa_{ep_1^{k+1}}, \,\, \kappa_{p_1^{k+2}},\,\, \kappa_{p_1^{k+3}}, \ldots, \kappa_{p_1^{2k+1}},$$
agree with the classes
$$a_2,a_4,\ldots, a_{n+1}, \,\, a_3,a_5,\ldots, a_n.$$
This implies that the displayed $\kappa$-classes are algebraically independent and that they generate $H^*(B\aut_+(\CP^n);\QQ) = \QQ[a_2,\ldots,a_{n+1}]$.

Now assume $n$ is even, say $n=2k$. By what we said above, the ring
$$H^*(B\aut_+(\CP^n);\QQ)$$
may be identified with the subring of $\QQ[a_2,\ldots,a_{n+1}]$ consisting of all elements in degrees divisible by $4$.
This ring is generated by the following $n + \binom{k}{2}$ elements:
\begin{itemize}
\item $a_{\ell}$ for all even $\ell$, and
\item $a_{i,j} = a_i a_j$ for all odd $i$ and $j$ with $i\leq j$.
\end{itemize}
It is easy to see that the kernel of the map
$$\QQ[a_\ell,a_{i,j}] \to H^*(B\aut_+(\CP^n);\QQ)$$
is generated by the $\binom{k}{2}$ polynomials
$$a_{i,j}^2 - a_{i,i} a_{j,j},\quad i<j.$$
and these clearly form a regular sequence. Hence, the ring $H^*(B\aut_+(\CP^n);\QQ)$ is a complete intersection of Krull dimension $n$ and embedding dimension $n + \binom{k}{2}$.

We will now show that all classes are tautological. Clearly, it suffices to prove that the generators $a_\ell$ and $a_{i,j}$ are tautological.

For $n=2$, the computation is very easy. The expressions
\begin{align*}
\kappa_{p_1^2} & = -21a_2, \\
\kappa_{p_1^4} & = 81a_3^2 - 609 a_2^3,
\end{align*}
show that $\kappa_{p_1^2},\kappa_{p_1^4}$ generate the same subring as $a_2,a_3^2$.

The idea in the general case is the same but more care is required.
\begin{lemma}
Let $n$ be even, say $n=2k$.
Let $\beta_2,\ldots,\beta_{k+1}\in H^*(BSO(2n);\QQ)$ be classes of degree $|\beta_s| = 4s$ such that $\beta_{s|2s} =0$ for $s\leq k$ and $\beta_{s|1} \not\in \aideal^2$ for all $s$.
Then $H^*(B\aut_+(\CP^n);\QQ)$ is minimally generated by the classes
\begin{equation} \label{eq:generators}
\kappa_{p_1^{k+1}},\ldots,\kappa_{p_1^{2k}}, \quad \kappa_{p_1^{k+i} \beta_s},
\end{equation}
for $2\leq s \leq k+1$ and $s-1\leq i \leq k$.
\end{lemma}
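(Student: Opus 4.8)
The plan is to pass to the maximal ideal and compute the images of the listed classes in $\mathfrak{m}/\mathfrak{m}^2$, then verify degree by degree that these images form a basis. Write $R=H^*(B\aut_+(\CP^n);\QQ)=A^\Gamma$, where $A=\QQ[a_2,\dots,a_{n+1}]$ and $\Gamma=\ZZ/2\ZZ$ acts by $a_m\mapsto(-1)^ma_m$; let $\aideal\subseteq A$ be its augmentation ideal and $\mathfrak{m}\subseteq R$ the maximal ideal. (Everything in sight pulls back to $A$, the cohomology of the double cover $B\aut_\circ(\CP^n)$, so all computations below are carried out there.) As recalled above, $R$ is minimally generated by the $a_{2j}$ ($1\le j\le k$) together with the products $a_ia_j$ ($i\le j$ odd), so $\dim_\QQ\mathfrak{m}/\mathfrak{m}^2=n+\binom{k}{2}$. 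Since the collection \eqref{eq:generators} has exactly $k+\binom{k+1}{2}=n+\binom{k}{2}$ elements, graded Nakayama plus this dimension count reduces the lemma to showing that the images of the listed classes in $\mathfrak{m}/\mathfrak{m}^2$ are linearly independent.

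For the classes $\kappa_{p_1^{k+l}}$ ($1\le l\le k$) this is immediate from Lemma~\ref{lemma:congruences}: there $\kappa_{p_1^{k+l}}\equiv c_l\,a_{2l}\pmod{\aideal^2}$ with $c_l\neq0$, so in $\mathfrak{m}/\mathfrak{m}^2$ the class $\kappa_{p_1^{k+l}}$ is $c_l[a_{2l}]$ plus a combination of the indecomposables $[a_ia_j]$ ($i,j$ odd), and it is the only listed class involving $[a_{2l}]$. The heart of the argument is the second family. I would write $\kappa_{p_1^{k+i}\beta_s}=\pi_!\big((p_1^{fw})^{k+i}\,\beta_s^{fw}\big)$, expand $(p_1^{fw})^{k+i}=((n+1)\omega_{fw}^2-2a_2)^{k+i}$ by the binomial theorem and $\beta_s^{fw}=\sum_j\beta_{s|j}\,\omega_{fw}^j$, and integrate along the fiber via Lemma~\ref{lemma:pf} ($\pi_!(\omega_{fw}^{n+r})\equiv-a_r\pmod{\aideal^2}$ for $1\le r\le n+1$, and $\pi_!(\omega_{fw}^{n+r})\in\aideal^2$ for $r>n+1$). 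The hypothesis $\beta_{s|2s}=0$ for $s\le k$---together with a degree check ruling out the only other scalar coefficient, $\beta_{k+1|2k+2}$, from contributing in the relevant range---forces all $\omega_{fw}$-coefficients of $\beta_s^{fw}$ into $\aideal$; filtering by powers of $\aideal$, and using that $\aideal^3\cap R$ and every product of two even-index generators lie in $\mathfrak{m}^2$, one then finds that $\kappa_{p_1^{k+i}\beta_s}\in\aideal^2$ and that, in $\mathfrak{m}/\mathfrak{m}^2$,
\[
[\kappa_{p_1^{k+i}\beta_s}]\;=\;-(n+1)^{k+i}\sum_{\substack{j\ge1\\ j\ \mathrm{odd}}}c_j^{(s)}\,[\,a_{2s-j}\,a_{2i+j}\,],
\]
where $c_j^{(s)}$ denotes the coefficient of $a_{2s-j}$ (the unique indecomposable of $A$ of degree $4s-2j$) in $\beta_{s|j}$. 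The coefficient $c_1^{(s)}$ of the leading $j=1$ term is nonzero precisely because $\beta_{s|1}\notin\aideal^2$.

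Finally I would assemble these computations degree by degree. Fix $l\ge1$ and work in degree $4l$. The monomial basis of $(\mathfrak{m}/\mathfrak{m}^2)_{4l}$ is $\{[a_{2l}]\}$ (present iff $l\le k$) together with the classes $[a_{i'}a_{2l-i'}]$ for $i'$ odd with $\max(3,2l-2k-1)\le i'\le l$. The listed classes in degree $4l$ are $\kappa_{p_1^{k+l}}$ (if $l\le k$) and the $\kappa_{p_1^{k+i}\beta_s}$ with $i+s=l$; the constraints $s-1\le i\le k$ and $2\le s\le k+1$ say exactly that, for fixed $l$, the quantity $2s-1$ ranges over the odd integers in $[\max(3,2l-2k-1),l]$, so $(i,s)\mapsto2s-1$ is a bijection from this set of classes onto the index set of the $[a_{i'}a_{2l-i'}]$. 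Since $\kappa_{p_1^{k+i}\beta_s}$ contributes $[a_{2s-1}a_{2i+1}]$ with the nonzero coefficient $-(n+1)^{k+i}c_1^{(s)}$ and otherwise only $[a_{i'}a_{2l-i'}]$ with $i'<2s-1$, ordering these classes by decreasing $2s-1$ makes the transition matrix into the monomial basis triangular with nonzero diagonal; adjoining $\kappa_{p_1^{k+l}}$, the unique class detecting $[a_{2l}]$, gives a basis of $(\mathfrak{m}/\mathfrak{m}^2)_{4l}$. Running over all $l$ yields the desired linear independence, and hence the lemma.

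The main obstacle is the displayed computation of $[\kappa_{p_1^{k+i}\beta_s}]$: one must track which terms in the binomial expansion of $(p_1^{fw})^{k+i}$, paired via Lemma~\ref{lemma:pf} with powers of $\omega_{fw}$, survive first modulo $\aideal^3$ and then modulo $\mathfrak{m}^2$. The two hypotheses on $\beta_s$ are precisely what make this work: $\beta_{s|2s}=0$ places $\kappa_{p_1^{k+i}\beta_s}$ in $\aideal^2$ and lets it interact cleanly with the pushforward, so that it is of `product type', while $\beta_{s|1}\notin\aideal^2$ provides the nonvanishing diagonal entries needed for the triangularity argument.
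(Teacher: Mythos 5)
Your proposal is correct, but it proves the lemma by the dual route to the paper's. The paper argues \emph{generation} directly: it runs an induction on the degree $4\ell$, and inside it an induction on odd $i$, showing that each algebra generator $a_{2\ell}$ and $a_ia_{2\ell-i}$ of $A^{(4)}$ lies in the subring $R$ generated by the listed $\kappa$-classes; the key step relates $\pi_!(\beta_s\,\omega_{fw}^{2k+2\ell-2s})$ to $\kappa_{p_1^{k+\ell-s}\beta_s}$ via the identity $\omega_{fw}^2=\mu\, p_1^{fw}+\nu\,\kappa_{p_1^{k+1}}$ and peels off the lower-order terms $\pi_!(b_j\omega_{fw}^{2k+2\ell-j})$ using the inner inductive hypothesis, with $\beta_{s|1}\notin\aideal^2$ supplying the nonzero coefficient $f_0$ needed to solve for $\pi_!(a_i\omega_{fw}^{2k+2\ell-i})$. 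Minimality then comes from the count against the embedding dimension. You instead prove \emph{linear independence} in $\mathfrak{m}/\mathfrak{m}^2$ and get generation from the same count via graded Nakayama; your displayed congruence
$\kappa_{p_1^{k+i}\beta_s}\equiv-(n+1)^{k+i}\sum_{j\ \mathrm{odd}}c_j^{(s)}a_{2s-j}a_{2i+j}\pmod{\mathfrak{m}^2}$
checks out (the $r\geq1$ terms of the binomial expansion and the even-$j$ and $j=0$ terms all land in $\mathfrak{m}^2$ because $\aideal^3\cap A^{(4)}\subseteq\mathfrak{m}^2$ and even-even products are decomposable), the two hypotheses on $\beta_s$ enter in exactly the same way as in the paper ($\beta_{s|2s}=0$ puts the class in $\aideal^2$, $\beta_{s|1}\notin\aideal^2$ gives the nonzero diagonal entry $c_1^{(s)}$), and your bijection $(i,s)\mapsto 2s-1$ onto the odd indices in $[\max(3,2\ell-n-1),\ell]$ is what makes the transition matrix square and triangular. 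Your argument is the more transparent of the two — it isolates the single leading-term computation and replaces the paper's double induction with a block-triangularity check — while the paper's version has the mild advantage of exhibiting the generation explicitly without first knowing the embedding dimension. Both are complete proofs.
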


\begin{proof}
Let $A=H^*(B\aut_\circ(\CP^n);\QQ) = \QQ[a_2,\ldots,a_{n+1}]$.
We have that
$$H^*(B\aut_+(\CP^n);\QQ) = A^{(4)}$$
is the subring of elements in degrees divisible by $4$. Let $R$ denote the subring of $A$ generated by the classes \eqref{eq:generators}.
Clearly, $R\subseteq A^{(4)}$. We will prove that $A^{4\ell} = R^{4\ell}$ for all $\ell$ by induction.

In degree $0$ there is nothing to prove.
In degree $4$, Lemma \ref{lemma:congruences} shows that
$$\kappa_{p_1^{k+1}} = -(2n+3)(n+1)^k a_2,$$
so $a_2\in R$, and hence $A^4 = R^4$.

Let $\ell>1$ and assume by induction that $A^{4\ell'} = R^{4\ell'}$ for all $\ell'<\ell$.
To show that $A^{4\ell} = R^{4\ell}$, it is enough to show that the generators in degree $4\ell$ belong to $R$. These are
$$a_i a_{2\ell-i}$$
for all odd $i$ such that $\max(3,2\ell-n-1)\leq i \leq \ell$ and, if $2\ell\leq n+1$,
$$a_{2\ell}.$$
Let us use the convention that $a_m=0$ for $m>n+1$. Lemma \ref{lemma:pf} shows that
$$\pi_!(a_i \omega_{fw}^{n+2\ell-i}) + a_ia_{2\ell - i} \in \aideal^3,$$
for all $i\leq \ell$
All elements of $\aideal^3$ of degree $4\ell$ belong to $R$ by induction. Hence,
\begin{equation}
\mbox{$a_i a_{2\ell-i}\in R\quad$ if and only if $\quad \pi_!(a_i \omega_{fw}^{n+2\ell-i})\in R$,}
\end{equation}
for all odd $i\leq \ell$.

We will now show that $a_ia_{2\ell-i} \in R$, or equivalently $\pi_!(a_i \omega_{fw}^{n+2\ell-i})\in R$, for all odd $i\leq \ell$ by induction on $i$, starting with the vacuous case $a_1a_{2\ell-1} = 0 \in R$. Thus, let $i$ be odd with $3\leq i \leq \ell$ and assume that $a_ja_{2\ell-j} \in R$ for all odd $j<i$. We may also assume $2\ell -n-1\leq i\leq n+1$, because otherwise $a_i=0$ or $a_{2\ell-i} = 0$ and there is nothing to prove. Say $i=2s-1$. We have $s\leq k+1$ because $i\leq n+1$. By the assumption $\beta_{s|2s} = 0$ for $s\leq k$ (and the fact that $A^2 = 0$) we may write
$$\beta_s =\sum_{j=2}^{2s} b_j \omega_{fw}^{2s-j},$$
where $b_j \in A^{2j}$. Multiplying by $\omega_{fw}^{2k+2\ell-2s}$ yields
\begin{equation} \label{eq:beta}
\beta_s \omega_{fw}^{2k+2\ell-2s} =\sum_{j=2}^{2s} b_j \omega_{fw}^{2k+2\ell -j}.
\end{equation}
Now note that since
\begin{equation} \label{eq:p_1}
\omega_{fw}^2 = \mu p_1^{fw} + \nu \kappa_{p_1^{k+1}}
\end{equation}
for non-zero rational numbers $\mu$, $\nu$, it follows that
$$\pi_!(\beta_s \omega_{fw}^{2k+2\ell-2s}) = c\kappa_{\beta_s p_1^{k+\ell-s}} + K,$$
where $c$ is a non-zero rational number and $K$ is a sum of products of $\kappa$-classes of lower degree, whence $K\in R$ by induction (this only uses that the $\kappa$-classes have degree divisible by four, not their specific form).
Also note that $\kappa_{\beta_s p_1^{k+\ell-s}}$ is one of the generators for $R$. Indeed,
$s-1\leq \ell-s$ holds because $2s-1=i \leq \ell$ by assumption, and  
$\ell-s \leq k$ holds because we have assumed $2\ell-i\leq n+1$.

Turning to the pushforward of the right hand side of \eqref{eq:beta}, consider
$$\pi_!(b_j\omega_{fw}^{2k+2\ell-j}) = b_j \pi_!(\omega_{fw}^{2k+2\ell-j}) .$$
The inequality $j\leq 2s=i+1\leq \ell+1$ implies $|b_j|= 2j <4\ell$ since $\ell>1$, so $b_j\in R$ for all even $j$. Also, for $j=2j'$ even and positive, the class $\pi_!(\omega_{fw}^{2k+2\ell-j})$ has degree $4(\ell-j') < 4\ell$, so it belongs to $R$ by induction. Thus, $\pi_!(b_j\omega_{fw}^{2k+2\ell-j}) \in R$ for all even $j$.

Now for odd $j$, we may write
$$b_j = \sum_{\substack{q=3 \\ q \,\, \textrm{odd}}}^j f_{j-q}a_q$$
where $f_{j-q} \in A^{2(j-q)}$. Hence,
\begin{equation} \label{eq:bj}
\pi_!(b_j\omega_{fw}^{2k+2\ell-j}) = \sum_{\substack{q=3 \\ q \,\, \textrm{odd}}}^j f_{j-q} \pi_!(a_q \omega_{fw}^{2k+2\ell-j}).
\end{equation}
For $q<j$, both factors $f_{j-q}$ and $\pi_!(a_q\omega_{fw}^{2k+2\ell-j})$ have degrees that are smaller than $4\ell$ and divisible by $4$, so they belong to $R$ by induction. For $q=j$, we have $f_0 \in \QQ$ and $\pi_!(a_j\omega_{fw}^{2k+2\ell-j})\in R$ by the inductive hypothesis that $a_ja_{2\ell-j} \in R$ for odd $j<i$.

Thus, we have shown that $\pi_!(b_j\omega_{fw}^{2k+2\ell-j}) \in R$ for all $j<i$ and that, for $j=i$, all terms in the right hand side of \eqref{eq:bj}, except possibly the one corresponding to $q=i$, belong to $R$.
But applying $\pi_!$ to \eqref{eq:beta} and using our above observation that $\pi_!(\beta_s \omega_{fw}^{2k+2\ell-2s}) \in R$, we can conclude that also this last term,
$$f_0\pi_!(a_i \omega_{fw}^{2k+2\ell-i}),$$
belongs to $R$. The assumption that $\beta_{s|1} \not\in \aideal^2$ means that $b_i$ must contain a term of the form $f_0 a_i$ with $f_0$ a non-zero rational number. Therefore, $\pi_!(a_i \omega_{fw}^{2k+2\ell-i})$ belongs to $R$ as well, and this finishes the induction on $i$.

To show that $a_{2\ell}\in R$, we use Lemma \ref{lemma:congruences}. It shows that
$$\kappa_{p_1^{k+\ell}} + (n+1)^{k+\ell} a_{2\ell} \in \aideal^2.$$
All elements of $\aideal^2$ in degree $4\ell$ belong to $R$ by induction or by the now proved statement that $a_ia_{2\ell -i} \in R$ for all odd $i$.
Also, $\kappa_{p_1^{k+\ell}}$ is one of the generators for $R$ since we may assume $\ell\leq k$ (otherwise $a_{2\ell} =0$ and there is nothing to prove). This finishes the induction on $\ell$ and concludes the proof.
\end{proof}

We have that
$$p_1^{fw} = (n+1) \omega_{fw}^2 -2a_2,$$
$$p_2^{fw} = \binom{n+1}{2} \omega_{fw}^4 -(2n-4)a_2\omega_{fw}^2-6a_3\omega_{fw} + 2a_4 + a_2^2.$$
We do not need a general formula for $p_s^{fw}$, but we record the following properties when $2s\leq n$: the leading term is
$$p_s^{fw} = \binom{n+1}{s} \omega^{2s} + \ldots,$$
and
$$p_s^{fw} \equiv (-1)^{s-1} (4s-2)a_{2s-1}\omega_{fw} + (-1)^s 2a_{2s} \pmod{\aideal^2 +(\omega_{fw})^2}.$$
These facts imply that we may use
$$\beta_s = (n+1)^sp_s^{fw} -\binom{n+1}{s} \left(p_1^{fw}\right)^s$$
for $s=2,3\ldots,k$. Finally, we have that
$$\left(p_1^{fw}\right)^{k+1} = -(2n+3)(n+1)^{k}a_2\omega_{fw}^n + \ldots  - (n+1)^{k+1}a_{n+1}\omega_{fw} + (-2a_2)^{k+1}.$$
This implies that we can use $\beta_{k+1} = \left(p_1^{fw}\right)^{k+1}$. By that, Theorem \ref{thm:taut orient} is proved.

\begin{proof}[Proof of Theorem \ref{thm:orient}]
Vanishing of the Pontryagin differences and the Euler difference for $\Isom_\circ(\CP^n)$-bundles follows from Corollary \ref{cor:pun}, because $\Isom_\circ(\CP^n) \cong PU(n+1)$. One can show that the map $\Isom_\circ(\CP^n)\cong PU(n+1) \to B\aut_\circ(\tau_{\CP^n}^\RR)^{p,e}$ is a rational equivalence as in the proof of Theorem \ref{thm:tcd}. For $n$ odd, $\Isom^+(\CP^n) = \Isom_\circ(\CP^n)$ and $B\aut(\tau_{\CP^n}^\RR)^{p,e} = B\aut_\circ(\tau_{\CP^n}^\RR)^{p,e}$ so nothing more needs to be said. For $n$ even, one uses that $\Isom^+(\CP^n) \cong \Isom_\circ(\CP^n)\rtimes \ZZ/2\ZZ$, and similarly for $B\aut(\tau_{\CP^n}^\RR)^{p,e}$.
\end{proof}

\begin{proof}[Proof of Theorem \ref{thm:taut}]
Theorem \ref{thm:orient} and Theorem \ref{thm:taut orient} show that all characteristic classes of $\Isom^+(\CP^n)$-bundles with fiber $\CP^n$ are tautological, so the map \eqref{eq:taut surj new} is surjective.

As in the proof of Theorem \ref{thm:tcd}, the map in cohomology induced by
$$B\Isom_\circ(\CP^n) \sim_\QQ B\aut_\circ(\tau_{\CP^n}^\RR)^{p,e} \to B\aut_\circ(\tau_{\CP^n}^\RR)$$
may be identified with the homomorphism
$$\QQ[a_2,\ldots,a_{n+1},p_{i|j},e_{|i}] \to \QQ[a_2,\ldots,a_{n+1},p_{i|j},e_{|i}]/\pdideal^{univ},$$
where $\pdideal^{univ}$ is the ideal generated by the coefficients of the Pontryagin and Euler differences of the universal orientable $\tau_{\CP^n}^\RR$-fibration. It follows that the map
\begin{equation} \label{eq:comp}
R^*(\tau_{\CP^n}^\RR) \to H^*(B\Isom^+(\CP^n);\QQ)
\end{equation}
has kernel $R^*(\tau_{\CP^n}^\RR)\cap \pdideal^{univ}$.
The hypotheses on the given $\tau_{\CP^n}^\RR$-fibration over $B$ imply a factorization of \eqref{eq:comp} into surjective ring homomorphisms
$$R^*(\tau_{\CP^n}^\RR) \xrightarrow{g} R^*(B) \xrightarrow{f} H^*(B\Isom^+(\CP^n);\QQ).$$
This implies that $\ker(f) = g(\ker(fg))$, which is seen to be equal to $R^*(B)\cap \pdideal$ by the above.
\end{proof}

\begin{proof}[Proof of Theorem \ref{thm:pdnz}]
The ring $H^*(B\aut(\tau_{\CP^2}^\RR)^e;\QQ)$ may be identified with the subring of $\QQ[a_2,a_3,p_{1|0},p_{1|1}]$ generated by the elements
\begin{equation} \label{eq:gens}
a_2,\quad p_{1|0},\quad p_{1|1}^2,\quad a_3p_{1|1},\quad a_3^2.
\end{equation}
If $(\pi,\zeta)$ denotes the universal $\tau_{\CP^2}^\RR$-fibration with trivialized Euler difference, then
\begin{align*}
p_1(\zeta) & = 3\omega_{fw}^2+p_{1|1}\omega_{fw} + p_{1|0}, \\
p_1^{fw}(\pi) & = 3\omega_{fw}^2-2a_2, \\
e(\zeta) = e^{fw}(\pi) & = 3\omega_{fw}^2 + a_2.
\end{align*}
With the above expressions at hand, it is straightforward to compute the following.
\begin{align*}
\kappa_{p_1^2} & = -9a_2 + 6p_{1|0} + p_{1|1}^2, \\
\kappa_{\HL_2} & = -\tfrac{4}{15}a_2 - \tfrac{2}{15} p_{1|0} - \tfrac{1}{45} p_{1|1}^2, \\
\kappa_{\HL_3} & = \tfrac{1}{15}a_3p_{1|1} - \tfrac{34}{315}a_2^2 - \tfrac{1}{63}a_2p_{1|0} + \tfrac{2}{105}p_{1|0}^2 - \tfrac{2}{105}a_2p_{1|1}^2 + \tfrac{2}{315}p_{1|0}p_{1|1}^2, \\
\kappa_{p_1^4}
& = 81 a_3^2 - 81 a_2^3  + 108a_2^2p_{1|0} - 54a_2 p_{1|0}^2 + 12p_{1|0}^3 + 216a_2 a_3 p_{1|1} - 108a_3 p_{1|0} p_{1|1}, \\
& \quad + 54 a_2^2 p_{1|1}^2 - 36a_2p_{1|0} p_{1|1}^2 + 6p_{1|0}^2 p_{1|1}^2 - 12 a_3 p_{1|1}^3 - a_2p_{1|1}^4.
\end{align*}
Setting $\lambda = p_{1|1}^2$, the first two equations show that $\lambda$, $\kappa_{p_1^2}$, $\kappa_{\HL_2}$ span the same subspace as $a_2$, $p_{1|0}$, $p_{1|1}^2$.
The last two can then be used in turn to express $a_3p_{1|1}$ and $a_3^2$ in terms of $\kappa_{p_1^2},\kappa_{p_1^4},\kappa_{\HL_2},\kappa_{\HL_3},\lambda$.
All algebraic relations among the generators \eqref{eq:gens} are consequences of the single relation
$$(a_3p_{1|1})^2 = (a_3^2)(p_{1|1}^2).$$
It follows that
$$H^*(B\aut(\tau_{\CP^2}^\RR)^e) = \QQ[\kappa_{p_1^2},\kappa_{p_1^4},\kappa_{\HL_2},\kappa_{\HL_3},\lambda]/J,$$
where $J$ is the principal ideal generated by the element $(a_3p_{1|1})^2 - (a_3^2)(p_{1|1}^2)$ rewritten in the new generators.
Let $B\aut(\tau_{\CP^2}^\RR)_L^e$ denote the classifying space of $\tau_{\CP^2}^\RR$-fibrations with trivializations of the Euler difference and the classes $\kappa_{\HL_2},\kappa_{\HL_3}$. The classes $\kappa_{\HL_2},\kappa_{\HL_3}$ form a regular sequence in the cohomology of $B\aut(\tau_{\CP^2}^\RR)^e$, so
$$H^*(B\aut(\tau_{\CP^2}^\RR)_L^e;\QQ) \cong \QQ[\kappa_{p_1^2},\kappa_{p_1^4},\lambda]/I,$$
where $I$ is the reduction of $J$ modulo $(\kappa_{\HL_2}, \kappa_{\HL_3})$. By rewriting $(a_3p_{1|1})^2 - (a_3^2)(p_{1|1}^2)$ in the new generators and multiplying with a suitable scalar, we find that $I$ is generated by the element
$$\lambda^4 -\tfrac{6304}{2023}\kappa_{p_1^2}\lambda^3 + \tfrac{35905}{14161}\kappa_{p_1^2}^2\lambda^2 +  \left(\tfrac{116}{289}\kappa_{p_1^2}^3 - \tfrac{1764}{289}\kappa_{p_1^4} \right)\lambda.$$
In particular, this shows that the kernel of the surjective map
$$H^*(B\aut(\tau_{\CP^2}^\RR)_L^e;\QQ) \to H^*(B\Isom^+(\CP^2);\QQ)$$
is the principal ideal generated by $\lambda$.

We have that $pd_{1|1} = -p_{1|1}$ and $pd_{1|0} = -2a_2-p_{1|0}$. The equations
\begin{align*}
21pd_{1|0} & = 4\kappa_{p_1^2} - 7\kappa_{ep_1} + 180\kappa_{\HL_2}, \\
45\kappa_{\HL_2} & = 6pd_{1|0} -pd_{1|1}^2,
\end{align*}
show that $pd_{1|0}$ and $\lambda$ are tautological and that $\lambda = 6pd_{1|0}$ if $\kappa_{\HL_2} = 0$.

For an arbitrary $\tau_{\CP^n}^\RR$-fibration over a space $B$ with trivial Euler difference and trivial $\kappa_{\HL_2}, \kappa_{\HL_3}$, the above shows that the kernel of
$$R^*(B) \to H^*(B\Isom^+(\CP^2);\QQ)$$
is the principal ideal generated by $pd_{1|0}$.

Imposing trivializations of $\kappa_{\HL_i}$ for $i>3$ by taking the homotopy fiber of a suitable map from $B\aut(\tau_{\CP^2}^\RR)_L^e$ to a product of Eilenberg-Mac Lane spaces will not change the cohomology in degree $4$, so the resulting space will have a $\tau_{\CP^2}^\RR$-fibration over it with trivial Euler difference and $\kappa_{\HL_i} = 0$ for all $i>1$, but with $pd_{1|0}\ne 0$. In particular, it has a non-vanishing Pontryagin difference.
\end{proof}

\end{document}